
\documentclass[a4paper, 11pt]{article}
\usepackage{authblk, setspace, subfig, caption}
\usepackage{amsmath}
\usepackage{amssymb,amscd, mathtools} 
\usepackage{enumerate} 
\usepackage[active]{srcltx} 
\usepackage{amsthm, leftidx}
\usepackage{multirow, booktabs}
\usepackage{algorithm}
\usepackage{algpseudocode}

\theoremstyle{definition}
\newtheorem{definition}{Definition}
\newtheorem{convention}{Convention}
\newtheorem{remark}{Remark}
\newtheorem{example}{Example}

\theoremstyle{plain}
\newtheorem{theorem}{Theorem}
\newtheorem{lemma}{Lemma}
\newtheorem{proposition}{Proposition}
\newtheorem{corollary}{Corollary}

\usepackage{tikz}
\usetikzlibrary{mindmap,shadows, calc}
\usepackage[hidelinks,pdfencoding=auto]{hyperref}

\usepackage{siunitx}
\sisetup{round-mode=places,round-precision=3}

\newcommand{\RR}{\mathbb{R}}

\def\states{\mathcal X}

\def\cset{\mathcal M}
\def\csete{\mathcal C}
\def\allgambles{\RR^n}

\def\lpr{\underline P}
\def\upr{\overline P}

\def\gambleset{\mathcal F}

\usepackage[numbers]{natbib} 
\bibliographystyle{abbrv}

\newcommand{\low}[1]{{\underline{#1}}}
\newcommand{\up}[1]{{\overline{#1}}}
\newcommand{\ncone}[2]{N(#1, #2)}
\newcommand{\fan}[1]{{\mathcal N(#1)}}
\newcommand{\charf}[1]{{\mathbb I_{#1}}}
\newcommand{\setcol}[1]{\mathcal #1}
\newcommand{\cone}[1]{\mathrm{cone}(#1)}
\newcommand{\ri}[1]{\mathrm{ri}(#1)}

\title{Normal cones corresponding to credal sets of lower probabilities}
\author[1]{Damjan \v{S}kulj \\ University of Ljubljana, Faculty of Social Sciences \\ Kardeljeva pl. 5, SI-1000 Ljubljana, Slovenia \\ \href{mailto:damjan.skulj@fdv.uni-lj.si}{\tt damjan.skulj@fdv.uni-lj.si} }

\begin{document}
	
	\maketitle

	\begin{abstract}
		Credal sets are one of the most important models for describing probabilistic uncertainty. They usually arise as convex sets of probabilistic models compatible with judgments provided in terms of coherent lower previsions or more specific models such as coherent lower probabilities or probability intervals. In finite spaces, credal sets usually take the form of convex polytopes. Many properties of convex polytopes can be derived from their normal cones, which form polyhedral complexes called normal fans. We analyze the properties of normal cones corresponding to credal sets of coherent lower probabilities. For two important classes of coherent lower probabilities, 2-monotone lower probabilities and probability intervals, we provide a detailed description of the normal fan structure. These structures are related to the structure of the extreme points of the credal sets. To arrive at our main results, we provide some general results on triangulated normal fans of convex polyhedra and their adjacency structure.
				
		\smallskip\noindent
		{\bfseries Keywords.} normal cone,  credal set, convex polyhedron, extreme point, imprecise probability, coherent lower probability  	
		
		\smallskip\noindent
		2020 Mathematics Subject Classification: 60A86, 52B11
	\end{abstract}

\section{Introduction}
The application of mathematical models involving probabilities often suffers from the lack of sufficient evidence to support a single model. Therefore, adherence to classical models requires unwarranted assumptions that lead to unreliable results. The lack of evidence in probabilistic models is often referred to as uncertainty or imprecision. While uncertainty can be understood as a general concept addressed by probabilistic models, imprecision is the term that explicitly describes situations that no particular probabilistic model can adequately describe. The theories of imprecise probabilities were developed to provide methods that can deal with such probabilistic models and produce the outputs where the imprecision is faithfully reflected. In most cases, probabilistic imprecision is described by sets of probability distributions that are consistent with the available information, rather than by a particular precise distribution. The sets are represented by various types of constraints, ranging from the most general coherent lower and upper previsions to more specific coherent lower and upper probabilities, probability intervals, $p$-boxes, belief and possibility functions, and other models.

In recent years, methods of imprecise probabilities \cite{augustin2014introduction, bradley2019imprecise, quaeghebeur2022introduction} have been applied to various areas of probabilistic modelling, such as stochastic processes \cite{decooman-2008-a, skulj:09IJAR}, game theory \cite{miranda2018shapley, nau2011imprecise}, reliability theory \cite{coolen2004use, khakzad2019system, oberguggenberger2009classical, utkin2007imprecise, yu2016comparing}, decision theory \cite{JANSEN2018112, montes2014decision, troffaes2007decision}, financial risk theory \cite{pelessoni2003convex, vicig2008financial}, computer science \cite{abellan2018increasing, quost2018classification, utkin2019imprecise, utkin2020imprecise}, copulas \cite{dolvzan2022some, omladivc2020constructing, omladivc2020final, omladivc2020full,  zhang2020quantification} and others. 

The multiple probabilistic models that make up an imprecise model usually form a set described by a finite number of constraints that may arise directly from the available information. When the constraints are linear, they yield closed and convex sets, called \emph{credal sets}. In fact, all the above models define convex credal sets. The convexity of the models is an advantage because they allow efficient computations by implementing linear programming techniques.

In addition to optimization with respect to credal sets, understanding the associated structures is often important to understanding the models. In the field of imprecise probabilities, the geometric approach is very common. It has been used particularly extensively in the theory of belief functions \cite{cuzzolin2010geometry, cuzzolin2020geometry}. It often involves the analysis of the extreme points of credal sets of different models \cite{antonucci2010credal, Miranda201544, miranda2003extreme, WALLNER2007339}. The nature and behaviour of convex sets in the extreme points and their neighbourhoods is important in dynamical systems, such as stochastic processes. These processes usually require the solution of multiple optimization problems, where the solution in earlier time steps determines the initial conditions for the later ones. This is important in discrete time models \cite{decooman-2008-a, skulj:09IJAR, vskulj2017perturbation}, and even more so in continuous time models \cite{de2017limit, krak2017imprecise, skulj:15AMC, skulj20uqop}. The latter models would in principle require optimization with respect to the credal set at every point in an interval, which is not feasible. Discretization methods are then used to compute approximate solutions \cite{erreygers2018imprecise, krak2017imprecise}. As an alternative to discretization, methods based on normal cones \cite{vskulj2020computing, skulj:15AMC, skulj20uqop} have been proposed. Their advantage over discretization methods is that if a solution remains within a single normal cone within a time interval, the process is linear within that interval. The application of linearity within normal cones is a consequence of a general principle of normal cone additivity, which we state explicitly in Proposition~\ref{prop-cone-additivity}. Another well-known example is the so-called comonotonic additivity of lower expectations with respect to 2-monotone lower probabilities \cite{den:97,sch:86}. Another application of normal cones in the theory of imprecise probabilities was proposed in \cite{vskulj2019errors}, where a method based on normal cones is developed to estimate the maximum distance between a credal set and its approximation based on a finite number of constraints.

The aim of the present article is to analyze the structure of complexes consisting of normal cones, also called normal fans, corresponding to credal sets generated by coherent lower probabilities. We propose several general results and then analyze in detail normal cones corresponding to 2-monotone lower probabilities and probability intervals. The normal cones approach provides a new characterization of the extreme points of the credal sets. In the case of 2-monotone lower probabilities, the characterization previously known from the literature is associated with normal cones. However, in the case of probability intervals, whose credal sets are more complex and diverse, our approach with normal cones proves to be very useful and allows a detailed analysis of their structure.

The article has the following structure. In the following section we give an overview of the underlying theory of convex polytopes and their normal cones. We are primarily concerned with minimal cones that may correspond to extreme points. These are characterized in abstract terms of relations between support vectors which do not depend on any particular realization of a convex polyhedron. In Section~\ref{ss-acgt}, the abstract structure of all possible minimal cones is endowed by an adjacency relation which allows a graph-theoretic interpretation. In Section~\ref{s-csip}, essential elements of models of imprecise probabilities are presented with special attention to finitely generated coherent lower previsions and their credal sets. In Section~\ref{s-nclp}, normal cones of credal sets of coherent lower probabilities are introduced and studied in detail, and the special case of 2-monotone lower probabilities is treated in Section~\ref{ss-nc2mon}. Finally, normal cones of credal sets corresponding to probability intervals are analyzed in Section~\ref{s-ncpri}.

\section{Normal cones}\label{s-nc}
\subsection{Normal cones and normal fans of convex polytopes}	
We first give some general elements from the theory of convex sets. Most of the notations and results are taken from \cite{bruns2009polytopes, gruber:07CDG, Gruenbaum2003, Rockafellar1970, Ziegler2012}. 

Let $\allgambles$ be a finite-dimensional vector space with the standard scalar product, which we denote by $xy$ or sometimes $x\cdot y$ for any pair of vectors $x, y\in \allgambles$. A \emph{polyhedron} in $\allgambles$ is an intersection of a finite number of half-spaces of the form $\{ x\in \allgambles \colon x f \geqslant b_f \}$, where $f\in \allgambles$ is a vector and $b_f$ is a constant. Thus, a convex polyhedron can be defined as
\begin{equation}\label{eq-convex-polyhedron}
	\csete = \{ x\in \allgambles \colon x f \geqslant b_f \text{ for all } f \in \gambleset \},
\end{equation}
where $\gambleset$ is a given finite collection of vectors and $\{ b_f\colon f\in \mathcal F\}$ is a collection of constants. A polyhedron that is bounded is called a \emph{(convex) polytope}. It is well known that every polytope in $\allgambles$ has an equivalent representation as a convex hull of finitely many extreme points (see, e.g., \cite[Theorem 14.2.]{gruber:07CDG}). A convex hull of $d+1$ affinely independent points in $\allgambles$ is called \emph{$d$-simplex} or simply \emph{simplex} and is a special case of a polytope. 

Some of the inequalities $x f\geqslant b_f$ in \eqref{eq-convex-polyhedron} may indeed be equalities. However, this case can be unified with the general case by replacing an equality condition $x f = b_f$ with two inequalities, $x f \geqslant b_f$ and $x (-f) \geqslant -b_f$. This allows us to use the simple description \eqref{eq-convex-polyhedron} throughout the text. 
The set $\csete'\subseteq \csete$ obtained by turning some (possibly none) of the inequalities $xf\ge b_f$ in \eqref{eq-convex-polyhedron} into equalities $xf = b_f$ is then a \emph{face} of $\csete$. Moreover, we consider the empty set as a face of any convex polyhedron, and clearly $\csete$ is also a face of itself. A face of a convex polyhedron is again a convex polyhedron of lower or equal dimension. A face of $\csete$ that is not equal to $\csete$ or $\emptyset$ is called a \emph{proper face}. The set of elements of $\csete$ not contained in any proper face is called a \emph{relative interior} of $\csete$ and is denoted by $\ri{\csete}$. 

Now let $\csete$ be a polyhedron and take a point $x\in \csete$ and define its \emph{normal cone} as the set 
\begin{equation}
	\ncone{\csete}{x} = \{ f\in \allgambles \colon x f \leqslant y f \text{ for every } y\in \csete \}.
\end{equation}
That is, the normal cone of $x$ is the set of all vectors $f$ for which $x = \arg\min_{y\in\csete}y f$. The minimum of the above expression is usually recognized as a linear programming problem where $\csete$ is the feasible set. Thus, the normal cone of $x$ can be understood as the set of all vectors $f$ such that the objective function $y f$ has an optimal solution in $x$. It is known that only points in the boundary region minimize objective functions and therefore only normal cones for these elements are nonempty. 

In the case of a bounded convex set, each objective function is minimized in at least one extreme point. In this case, the union of the normal cones of the extreme points is therefore the entire space $\allgambles$. The basic theory of normal cones can be found in most monographs on convex theory. A thorough analysis can also be found in \cite{lu2008normal}, in particular from the point of view of normal fans, which also play an important role in this article.

Throughout this article we will be concerned with cones generated as non-negative linear combinations of finite sets of vectors $\mathcal G$, denoted by $\cone{\mathcal G}$. Thus, 
\begin{equation}\label{eq-cone-generated}
	\cone{\mathcal G} = \left\{ \sum_{f\in \mathcal G} \alpha_f f \colon \alpha_f \ge 0 \text{ for all } f\in \mathcal G \right\}. 
\end{equation}
If a cone is of the form \eqref{eq-cone-generated}, we will simply say that it is \emph{generated} by $\mathcal G$. 
A cone consists of its faces and relative interior which is equal to the set of all strictly positive linear combinations of elements in $\mathcal G$:
\begin{equation}
	\ri{\cone{\mathcal G}} = \left\{ \sum_{f\in \mathcal G} \alpha_f f \colon \alpha_f > 0 \text{ for all } f\in \mathcal G \right\}. 
\end{equation} 

The following proposition holds (see \cite{gruber:07CDG}, Proposition 14.1).
\begin{proposition}\label{prop-cone-positive-hull}
	Let $\csete$ be a convex polyhedron represented in the form \eqref{eq-convex-polyhedron} and $x\in \csete$ a boundary point. Let $\mathcal G(x) = \{ f\in \gambleset\colon x f = b_f \}$. Then 
	\begin{equation}
		\ncone{\csete}{x} = \cone{\mathcal G(x)}. 
	\end{equation}
	Moreover, if $x$ is an extreme point of $\csete$, then $\dim \ncone{\csete}{x} = n$ (dimension of the vector space). 
\end{proposition}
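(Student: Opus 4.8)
The plan is to establish the two inclusions $\cone{\mathcal G(x)} \subseteq \ncone{\csete}{x}$ and $\ncone{\csete}{x} \subseteq \cone{\mathcal G(x)}$ separately, and then derive the dimension statement by a perturbation argument. The first inclusion is immediate: given $g = \sum_{f\in\mathcal G(x)}\alpha_f f$ with all $\alpha_f\geq 0$, every $y\in\csete$ satisfies $yf\geq b_f = xf$ for each $f\in\mathcal G(x)$, so multiplying by $\alpha_f\geq 0$ and summing yields $yg\geq xg$; hence $g\in\ncone{\csete}{x}$.

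The reverse inclusion is the substantive part, and I would derive it from Farkas' lemma (equivalently, from closedness of finitely generated cones together with the separating hyperplane theorem). Suppose $g\in\ncone{\csete}{x}$ but $g\notin\cone{\mathcal G(x)}$. Then there exists $d\in\allgambles$ with $df\geq 0$ for all $f\in\mathcal G(x)$ and $dg<0$. I would then check that $x+\varepsilon d\in\csete$ for all sufficiently small $\varepsilon>0$: for $f\in\mathcal G(x)$ we have $(x+\varepsilon d)f = b_f + \varepsilon(df)\geq b_f$, whereas for the finitely many $f\in\gambleset\setminus\mathcal G(x)$ the inequality $xf>b_f$ is strict, so $(x+\varepsilon d)f>b_f$ once $\varepsilon$ is small enough --- a single $\varepsilon$ suffices by finiteness of $\gambleset$. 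But then $(x+\varepsilon d)g = xg + \varepsilon(dg) < xg$, contradicting $g\in\ncone{\csete}{x}$. Hence $g\in\cone{\mathcal G(x)}$, which completes the proof of the equality.

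For the dimension claim, suppose $x$ is an extreme point and, towards a contradiction, $\dim\ncone{\csete}{x}<n$. By the equality just proved, the linear span of $\mathcal G(x)$ is then a proper subspace of $\allgambles$, so there is a nonzero $v\in\allgambles$ with $vf=0$ for every $f\in\mathcal G(x)$. Arguing exactly as above, $x+\varepsilon v$ and $x-\varepsilon v$ both lie in $\csete$ for $\varepsilon>0$ small enough, since the tight constraints remain satisfied (with equality) and the slack ones stay slack. As $x = \tfrac12\big((x+\varepsilon v)+(x-\varepsilon v)\big)$ with $x+\varepsilon v\neq x-\varepsilon v$, this contradicts the extremality of $x$, so $\dim\ncone{\csete}{x}=n$.

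The only real obstacle is the reverse inclusion, which hinges on the separation/Farkas property of finitely generated cones; the remaining steps are routine perturbation arguments whose one delicate point is the uniform choice of $\varepsilon$, made possible by the finiteness of $\gambleset$.
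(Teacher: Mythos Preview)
Your argument is correct and follows the standard route: the easy inclusion is immediate, the reverse inclusion is obtained via Farkas/separation and a perturbation into $\csete$, and the full-dimensionality at an extreme point comes from the usual two-sided perturbation showing $\mathcal G(x)$ must span $\allgambles$. There is nothing to object to.

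As for comparison with the paper: the paper does not actually prove this proposition. It is stated with a reference to \cite{gruber:07CDG}, Proposition~14.1, and no argument is given in the text. Your proof therefore supplies what the paper leaves to the literature; the approach you chose is essentially the textbook one (and presumably close to what the cited reference does), so there is no meaningful methodological contrast to draw.
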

A cone generated by a linearly independent set of vectors $\mathcal G$ is called \emph{simplicial} (see e.g. \cite{Ziegler2012}). Simplicial cones play a central role in this paper. In general, a normal cone may not be simplicial, and will therefore be subdivided into simplicial cones by means of triangulations. 

\begin{example}\label{ex-simple-cones}
	In Figure~\ref{fig-nc1}, a convex polytope $\mathcal K$ is presented in the form \eqref{eq-convex-polyhedron}, with $\gambleset = \{ f_1, \ldots, f_5\}$. Dashed lines represent the support lines of the form $\{x\colon xf_i = b_{f_i}\}$. Two extreme points $x$ and $y$ are depicted with the corresponding normal cones $\ncone{\mathcal K}{x} = \cone{\{f_1, f_5\}}$ and  $\ncone{\mathcal K}{y} = \cone{\{f_4, f_5\}}$. Both cones are simplicial. 
\end{example}
\begin{figure}
	\centering
	\begin{tikzpicture}[scale = 15]
		\coordinate (x) at (0.000,  0.000);
		\coordinate (z) at (0.577,  1.000);
		\coordinate (y) at (1.155,  0.000);
		
		
		\coordinate (E1) at (0.531,  0.280);
		\coordinate (E2) at  (0.531,  0.227);
		\coordinate (E3) at  (0.592,  0.191);
		\coordinate (E4) at (0.670,  0.202);
		\coordinate (E5) at (0.702,  0.477);
		
		\draw[gray, opacity=.8,  fill = gray, fill opacity = .3] (E1) -- (E2)--(E3)	--	(E4) --	(E5)  -- cycle;
		
		
		\node at (.62, .3) {\footnotesize $\mathcal K$}; 
		

		\coordinate (f12) at (0.100,  -0.000);
		\coordinate (f23) at (0.050,  0.087);
		\coordinate (f34) at (-0.014,  0.099);
		\coordinate (f45) at (-0.099,  0.011);
		\coordinate (f51) at (0.076,  -0.065);
		
		\draw[->, very thin] ($(E1)!.5!(E2)$) --  ++($-1*(f12)$) node[below] {\footnotesize $f_1$};
		\draw[->, very thin] ($(E2)!.5!(E3)$) --  ++($-1*(f23)$) node[right] {\footnotesize $f_2$};
		\draw[->, very thin] ($(E3)!.5!(E4)$) --  ++($-1*(f34)$) node[above right] {\footnotesize $f_3$};
		\draw[->, very thin] ($(E4)!.5!(E5)$) --  ++($-1*(f45)$) node[above] {\footnotesize $f_4$};
		\draw[->, very thin] ($(E5)!.5!(E1)$) --  ++($-1*(f51)$) node[above] {\footnotesize $f_5$};
		
		\draw[very thin, dashed] ($(E1)!-.8!(E2)$) -- ($(E1)!1.8!(E2)$);
		\draw[very thin, dashed] ($(E2)!-.8!(E3)$) -- ($(E2)!1.8!(E3)$);
		\draw[very thin, dashed] ($(E3)!-.8!(E4)$) -- ($(E3)!1.8!(E4)$);
		\draw[very thin, dashed] ($(E4)!-.3!(E5)$) -- ($(E4)!1.3!(E5)$);
		\draw[very thin, dashed] ($(E5)!-.3!(E1)$) -- ($(E5)!1.3!(E1)$);
		
		\fill[left color=gray!15!white, right color=gray!35!white!80!red, draw=white] (E1) --  ++($-1*(f12)$) -- ($(E1) - (f51)$) -- cycle;
		\draw[very thin, ->] (E1) --  ++($-1.1*(f12)$);
		\draw[very thin, ->] (E1) --  ++($-1.1*(f51)$);
		\node at ($(E1)+(-.1, .02)$) {\footnotesize $N(\mathcal K, x)$};
		
		\fill[top color=gray!15!white, bottom color=gray!35!white!80!red, draw=white] (E5) --  ++($-1.1*(f51)$) -- ($(E5) - 1.1*(f45)$) -- cycle;
		\draw[very thin, ->] (E5) --  ++($-1.1*(f51)$);
		\draw[very thin, ->] (E5) --  ++($-1.1*(f45)$);
		\node at ($(E5)+(.05, .02)$) {\footnotesize $N(\mathcal K, y)$};
		
		\node[right] at (E1) {\footnotesize $x$};
		\node[below right] at (E5) {\footnotesize $y$};
	\end{tikzpicture}
	\caption{Normal cones $\ncone{\mathcal K}{x} = \cone{\{f_1, f_5\}}$ and  $\ncone{\mathcal K}{y} = \cone{\{f_4, f_5\}}$.}\label{fig-nc1}
\end{figure}
\begin{definition}[\cite{jensen2007non}]
	A collection $\mathcal P$ of polyhedra in $\allgambles$ is a \emph{polyhedral complex} (see Figure~\ref{fig-complex}) if 
	\begin{enumerate}[(i)]
		\item all proper faces of $P\in \mathcal P$ are in $\mathcal P$;
		\item an intersection of any two polyhedra in $\mathcal P$ is a face of both.  
	\end{enumerate}
\end{definition}
Let us consider some special cases of polyhedral complexes:
\begin{itemize}
	\item If $\mathcal P$ consists of cones, then we have a \emph{conical complex} or a \emph{fan}.
	\item If $\mathcal P$ consists of simplices, then it is a \emph{simplicial complex}.
	\item If $\mathcal P$ consists of simplicial cones, then we have a \emph{simplicial conical complex (simplicial fan)}. 
\end{itemize}
Despite similar notation, note that a simplicial conical complex consists of (simplicial) cones that are not simplices, and thus is not a simplicial complex. 
According to some definitions (see \cite{bruns2009polytopes}), polyhedral complexes as defined above are called \emph{embedded}, and fans are then embedded conical complexes. A special case of a fan is the collection of all normal cones of a polyhedron $\csete$, called \emph{normal fan} and denoted by $\fan{\csete} = \{ \ncone{\csete}{x}\colon x\in \csete \}$. Let $|\mathcal P|$ denote the \emph{union\footnote{The notation $|\mathcal P|$, which is standard in convex analysis, should not be confused with the usual notation for the cardinality of sets. In this paper it is used only in the case of polyhedral complexes to denote the union of polyhedra, while in all other cases it denotes the cardinality of sets.}} of all members of $\mathcal P$.  If $\mathcal P$ is a fan and  $|\mathcal P|=\allgambles$, then it is called a \emph{complete fan}. In the case of a polytope $\mathcal K$, every $f\in\allgambles$ belongs to at least one normal cone, and therefore, its normal fan is complete. 

The main focus of our analysis will be in simplicial fans. To transform a polyhedral complex into a simplicial one or a fan into a simplicial fan, the technique called \emph{triangulation} is used, as a special form of \emph{subdivision}. 
\begin{definition}[\cite{bruns2009polytopes}, Definition 1.45.]
	A \emph{subdivision} of a polyhedral complex $\mathcal P$ is another complex $\mathcal P'$ such that $|\mathcal P| = |\mathcal P'|$ and every polyhedron in $\mathcal P$ is a union of polyhedra of $\mathcal P'$. 
	
	If $\mathcal P$ and $\mathcal P'$ are polytopal, i.e., consisting of polytopes, or conical complexes and $\mathcal P'$ is simplicial, then it is called a \emph{triangulation} of $\mathcal P$ (see Figure~\ref{fig-complex})).
\end{definition}
\begin{figure}
	\begin{tikzpicture}[scale = 1.3]
		\coordinate (A) at (0,  0);
		\coordinate (B) at (1,  0);
		\coordinate (C) at (2,  -.5);
		\coordinate (D) at (3,  1.5);
		\coordinate (E) at (2,  1);
		\coordinate (F) at (1,  1);
		
		\draw[thick, fill = red!10!white] (A)--(B)--(E)--(F)--cycle;
		\draw[thick, fill = blue!10!white] (B)--(C)--(E)--cycle;
		\draw[thick, fill = red!10!white] (C)--(D)--(E)--cycle;
		
		\node at ($(A)!0.5!(E)$) {$P_1$};
		\node at ($(F)!0.6!(C)$) {$P_2$};
		\node at ($(B)!0.65!(D)-(0, .2)$) {$P_3$};
		
	\end{tikzpicture}
	\begin{tikzpicture}[scale = 1.3]
		\coordinate (A) at (0,  0);
		\coordinate (B) at (1,  0);
		\coordinate (C) at (2,  -.5);
		\coordinate (D) at (3,  1.5);
		\coordinate (E) at (2,  1);
		\coordinate (F) at (1,  1);
		
		\draw[thick, fill = red!10!white] (A)--(B)--(E)--(F)--cycle;
		\draw[red!20!white, very thin, fill = red!20!white] (A)--(B)--(F)--cycle;
		\draw[thick] (F)--(A)--(B);
		\draw[thick, fill = blue!10!white] (B)--(C)--(E)--cycle;
		\draw[thick, fill = red!10!white] (C)--(D)--(E)--cycle;
		
		\node at ($(A)!0.5!(E)$) {$P_1$};
		\node at ($(F)!0.6!(C)$) {$P_2$};
		\node at ($(B)!0.65!(D)-(0, .2)$) {$P_3$};
		
		\draw[dashed, thick] (B)--(F);
	\end{tikzpicture}
	\begin{tikzpicture}[scale = 1.3]
		\coordinate (A) at (0,  0);
		\coordinate (B) at (1,  0);
		\coordinate (C) at (2,  -.5);
		\coordinate (D) at (3,  1.5);
		\coordinate (E) at (2,  1);
		\coordinate (F) at (1,  1);

		\draw[thick, fill = red!10!white] (A)--(B)--(E)--(F)--cycle;
		\draw[red!20!white, very thin, fill = red!20!white] (A)--(E)--(B)--cycle;
		\draw[thick] (A)--(B);
		\draw[thick, fill = blue!10!white] (B)--(C)--(E)--cycle;
		\draw[thick, fill = red!10!white] (C)--(D)--(E)--cycle;
		
		\node at ($(A)!0.5!(E)$) {$P_1$};
		\node at ($(F)!0.6!(C)$) {$P_2$};
		\node at ($(B)!0.65!(D)-(0, .2)$) {$P_3$};
		
		\draw[thick, dashed] (A)--(E);
	\end{tikzpicture}
	\caption{A polyhedral complex (left) composed of three 2-dimensional polyhedra, their 1-dimensional faces (edges), and 0-dimensional faces (vertices). $P_2$ and $P_3$ are simplicial and $P_1$ is not. Two possible triangulations are shown in the middle and on the right.}\label{fig-complex}
\end{figure}
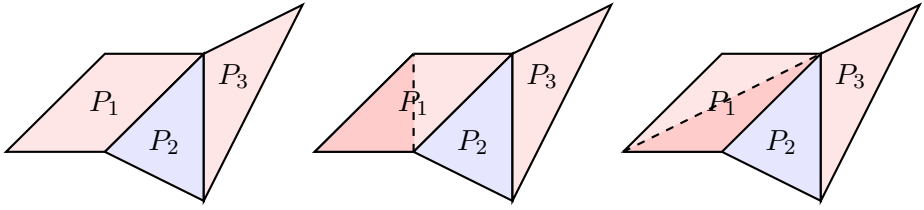
 It is known that every polytopal complex as well as every fan has a triangulation. More precisely, the following theorem holds for fans. 
\begin{theorem}[\cite{bruns2009polytopes}, Theorem 1.54]\label{thm-bruns}
	Let $\mathcal P$ be a conical complex and $\mathcal F\subset |\mathcal P|$ be a finite set of non-zero vectors such that $\mathcal F\cap C$ generates $C$ for every $C\in \mathcal P$. Then there exists a triangulation $\mathcal T$ of $\mathcal P$ such that $\cone{\{f\}}$, for $f\in \mathcal F$, are exactly the 1-dimensional faces of the elements of $\mathcal T$. 
\end{theorem}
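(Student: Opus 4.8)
The plan is to build $\mathcal T$ as a \emph{lexicographic (pulling) triangulation}. Throughout, the cones of $\mathcal P$ are pointed; this is tacit in the statement, since a cone with a nontrivial lineality space has no $1$-dimensional faces at all. Two preliminary remarks. First, the hypothesis already forces every extreme ray of every $C\in\mathcal P$ to contain some $f\in\mathcal F$, because an extreme ray cannot be a non-negative combination of generators lying off it while $\mathcal F\cap C$ generates $C$; hence every extreme ray of $\mathcal P$ is of the form $\cone{\{f\}}$, and adjoining the remaining rays $\cone{\{f\}}$, $f\in\mathcal F$, cannot spoil the conclusion. Second, for a pointed cone $C$ and $0\ne f\in C$ the number $\lambda^{\ast}(x)=\max\{\lambda\geqslant 0\colon x-\lambda f\in C\}$ is finite (by pointedness: otherwise $C$ would contain $-f$, hence a line through $0$), and $x-\lambda^{\ast}(x)\,f$ lies in the relative interior of some face of $C$ that does not contain $f$; since the intersection of all facets containing a given face is that face, such a face lies in a facet $G$ with $f\notin G$. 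Consequently $C=\bigcup\{\cone{\{f\}\cup G}\colon G\text{ a facet of }C,\ f\notin G\}$, and the same computation shows these cones have pairwise disjoint interiors. This visibility lemma is what makes ``pulling $f$'' legitimate.

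Fix a linear order $f_1\prec\dots\prec f_m$ on $\mathcal F$ and proceed in two stages. \emph{Stage one} produces a simplicial fan $\mathcal T^{(0)}$ refining $\mathcal P$ whose rays all belong to $\{\cone{\{f\}}\colon f\in\mathcal F\}$, by recursion on $\dim C$: set $\mathcal T^{(0)}(\{0\})=\{\{0\}\}$, and for $C\in\mathcal P$ with $\dim C\geqslant 1$ let $f$ be the $\prec$-least element of $\mathcal F\cap C$, let $G_1,\dots,G_k$ be the facets of $C$ with $f\notin G_j$, and put $\mathcal T^{(0)}(C)=\bigcup_{j}\bigl(\mathcal T^{(0)}(G_j)\cup\{\cone{\{f\}\cup\sigma}\colon\sigma\in\mathcal T^{(0)}(G_j)\}\bigr)$; finally $\mathcal T^{(0)}=\bigcup_{C\in\mathcal P}\mathcal T^{(0)}(C)$. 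Each cone $\cone{\{f\}\cup\sigma}$ is simplicial because $f\notin\mathrm{span}(G_j)$ (as $f\in C$ but $f\notin G_j$), and by the visibility lemma the top-dimensional ones tile $C$; so, once we know that $\mathcal T^{(0)}$ is a fan at all, it is automatically a simplicial fan refining $\mathcal P$ with the required rays. \emph{Stage two} inserts those $f\in\mathcal F$ whose ray is not yet present, one at a time in $\prec$-order: to insert $f$, replace every simplicial cone $\sigma$ of the current complex with $f\in\sigma$ by the cones $\cone{\{f\}\cup\tau}$ with $\tau$ a face of $\sigma$ not containing $f$, and keep all other cones. Since $\sigma$ is simplicial so are the new cones; this star subdivision introduces only the ray $\cone{\{f\}}$ and removes none, so once all of $\mathcal F$ has been processed the set of rays is exactly $\{\cone{\{f\}}\colon f\in\mathcal F\}$.

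The substantive point -- the step I expect to be the main obstacle -- is that Stage one genuinely yields a \emph{fan}, i.e.\ the simplices produced inside distinct cones of $\mathcal P$ fit together along common faces. This rests on a \emph{compatibility} property, proved simultaneously with the construction by induction on $\dim C$: for every face $F$ of $C$ one has $\{\sigma\in\mathcal T^{(0)}(C)\colon\sigma\subseteq F\}=\mathcal T^{(0)}(F)$. The fixed global order is precisely what drives this: if the $\prec$-least generator $f$ of $C$ happens to lie in $F$, then $f$ is also the $\prec$-least generator of $F$, so both constructions use the same apex, and one then checks compatibility by separating the cases $f\in F$ and $f\notin F$ and using that the facets of $F$ not containing $f$ are exactly those intersections $F\cap G_j$ that have codimension one in $F$. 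Granting compatibility, $\mathcal T^{(0)}$ is a complex: an intersection of a simplex of $\mathcal T^{(0)}(C_1)$ with a simplex of $\mathcal T^{(0)}(C_2)$ lies in the common face $C_1\cap C_2$, on which both collections restrict to the single simplicial complex $\mathcal T^{(0)}(C_1\cap C_2)$, whose members meet in faces. Stage two is then routine, because the star-subdivision rule at $f$ depends only on the face of $\sigma$ through which $f$ passes and is therefore automatically consistent across cones, while preserving the property of being a simplicial fan refining $\mathcal P$. Combining the two stages gives a triangulation $\mathcal T$ of $\mathcal P$ whose $1$-dimensional faces are exactly $\cone{\{f\}}$ for $f\in\mathcal F$, as required.
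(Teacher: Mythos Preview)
The paper does not itself prove this theorem; it is quoted from \cite{bruns2009polytopes} (Theorem~1.54 there) and used as a black box for the subsequent corollary. Your construction via the pulling (lexicographic) triangulation, followed by stellar subdivisions at the remaining points of $\mathcal F$, is the standard argument and is essentially the one given in the cited reference, so there is nothing to compare against in the present paper. The outline is correct, and you rightly single out the compatibility step (that $\mathcal T^{(0)}(C)$ restricts to $\mathcal T^{(0)}(F)$ on every face $F$ of $C$) as the crux; the two small points worth tightening are the pairwise disjointness of the relative interiors of the cones $\cone{\{f\}\cup G_j}$ (which follows once one observes that for $x$ in the relative interior of $C$ the point $x-\lambda^{\ast}(x)f$ lies in the relative interior of a unique facet), and the case $f\in F$ of the compatibility argument, which uses that every facet of $F$ not containing $f$ is of the form $F\cap G_j$ for some $j$. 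Both are routine facts about face lattices of pointed polyhedral cones.
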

In our case we will need triangulations whose 1-dimensional faces are exactly the elements of $\gambleset$, where $\gambleset$ is a set of vectors generating $\csete$ (see \eqref{eq-convex-polyhedron}). 
\begin{corollary}\label{cor-cone-triangulation}
	Let $\mathcal K$ be a polytope of the form \eqref{eq-convex-polyhedron}, where $\mathcal F$ is a given finite set of vectors in $\allgambles$. Then a conical complex $\mathcal T$ exists such that for every $C_T\in \mathcal T$ it holds that it is contained in a single normal cone $C\in \fan{\mathcal K}$ and is generated by a linearly independent set $\mathcal G\subset \mathcal F$. 
\end{corollary}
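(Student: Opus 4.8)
\emph{Proof strategy.} The plan is to apply Theorem~\ref{thm-bruns} to the normal fan $\fan{\csete}$, using the set $\mathcal F$ itself (after discarding the zero vector, should it happen to occur in $\mathcal F$, which affects neither $\csete$ nor the argument) as the prescribed finite set of ray generators. First I would record the easy hypotheses: $\fan{\csete}$ is a conical complex, and since every objective function is minimized on $\csete$ in some extreme point, the normal fan is complete, so $|\fan{\csete}| = \allgambles$ and hence $\mathcal F \subset |\fan{\csete}|$.

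The one substantive hypothesis to verify is that $\mathcal F \cap C$ generates $C$ for every $C \in \fan{\csete}$. Each such $C$ equals $\ncone{\csete}{x}$ for a boundary point $x$, and Proposition~\ref{prop-cone-positive-hull} gives $C = \cone{\mathcal G(x)}$ with $\mathcal G(x) = \{ f \in \gambleset \colon x f = b_f \} \subseteq \gambleset = \mathcal F$. Since each $f \in \mathcal G(x)$ lies in $\cone{\mathcal G(x)} = C$, we have the inclusions $\mathcal G(x) \subseteq \mathcal F \cap C \subseteq C$; applying $\cone{\cdot}$ and using that $C$ is already a cone yields $C = \cone{\mathcal G(x)} \subseteq \cone{\mathcal F \cap C} \subseteq C$, so $\mathcal F \cap C$ generates $C$, as needed.

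Theorem~\ref{thm-bruns} then furnishes a triangulation $\mathcal T$ of $\fan{\csete}$ whose $1$-dimensional faces are exactly the rays $\cone{\{f\}}$ with $f \in \mathcal F$. I would then read off the two claimed properties. First, because $\mathcal T$ is a triangulation, every $C_T \in \mathcal T$ is a simplicial cone, hence generated by the set $\mathcal G$ of its extreme rays; each such ray is $\cone{\{f\}}$ for some $f \in \mathcal F$, so $\mathcal G \subseteq \mathcal F$, and being the generators of a simplicial cone, $\mathcal G$ is linearly independent. Second, because $\mathcal T$ subdivides $\fan{\csete}$, each $C_T$ lies in a single normal cone: taking $z$ in the relative interior of $C_T$, it belongs to the relative interior of a unique minimal cone $C \in \fan{\csete}$, and writing $C$ as a union of cones of $\mathcal T$ forces $C_T$ to be a face of one of them, whence $C_T \subseteq C$. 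This establishes the corollary.

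I do not anticipate a genuine obstacle; the only points needing a little care are the bookkeeping that the generators $\mathcal G(x)$ produced by Proposition~\ref{prop-cone-positive-hull} are literally elements of $\mathcal F$ (so that the hypothesis of Theorem~\ref{thm-bruns} is met rather than merely "almost" met), and the standard fact that a member of a subdivision is contained in a single member of the subdivided complex.
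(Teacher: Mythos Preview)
Your proposal is correct and follows essentially the same approach as the paper: verify via Proposition~\ref{prop-cone-positive-hull} that $\mathcal F\cap C$ generates each normal cone $C$, invoke Theorem~\ref{thm-bruns} to obtain the triangulation, and then read off the two required properties from the definition of a triangulation. Your write-up is in fact more careful than the paper's in spelling out why $\mathcal F\cap C$ generates $C$ and why each $C_T$ lies in a single member of $\fan{\csete}$.
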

\begin{proof}
	Let $\fan{\mathcal K}$ be the normal fan of $\mathcal K$. By Proposition~\ref{prop-cone-positive-hull}, every $C\in \fan{\mathcal K}$ is generated by $\mathcal F\cap C$, whence by Theorem~\ref{thm-bruns}, a triangulation $\mathcal T$ of $\fan{\csete}$ exists such that $\cone{\{f\}}$ are exactly the 1-dimensional faces of $\mathcal T$. It follows by the definition of triangulation that every cone $C_T\in \mathcal T$ is a simplicial cone, whose 1-dimensional faces are of the form $\cone{\{f\}}$. Further, such a simplicial cone is then generated by the linearly independent set $\mathcal G = \{ f \colon \cone{\{f\}} \text{ is a face of } C_T\}$. Since $\mathcal T$ is a triangulation, every such $C_T$ is a subset of some normal cone $C\in \fan{\mathcal K}$. 
\end{proof}
A triangulation $\mathcal T$ of a normal fan $\fan{\mathcal K}$ is also a fan and will be called \emph{complete normal simplicial fan}. Note that the elements of a complete fan cover entire $\allgambles$. We require not only that $\mathcal T$ is a triangulation, but also that all elements of $\gambleset$ are its 1-dimensional faces. That is, no $f\in \gambleset$ lies in the relative interior of any $C_T\in \mathcal T$. Recall that by Corollary~\ref{cor-cone-triangulation} every cone $C_T\in \mathcal T$ is of the form $\cone{\mathcal G}$. A characterization of the elements of complete simplicial fans follows.
\begin{proposition}\label{prop-full-triangulation}
	Let $\cone{\mathcal G}$, for $\mathcal G\subset \gambleset$ be an element of a complete normal simplicial fan $\mathcal T$ obtained as a triangulation of $\fan{\mathcal K}$ for some polytope $\mathcal K$ of the form \eqref{eq-convex-polyhedron}, so that $\gambleset$ is the set of gambles forming the constraints. Then 
	\begin{enumerate}[(i)]
		\item $\mathcal G$ is linearly independent;
		\item for every $f\in \gambleset\backslash \mathcal G$ we have that $f\not\in \cone{\mathcal G}$;
		\item if $|\mathcal G| = \dim \allgambles$, then a convex set $\csete$ exists such that $\cone{\mathcal G}$ is its normal cone in an extreme point;
		\item a convex set $\csete$ exists such that $\cone{\mathcal G}$ is its normal cone.  
	\end{enumerate}
\end{proposition}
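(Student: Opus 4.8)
The plan is to obtain (i) directly from the definition of a simplicial fan, to derive (ii) from the axioms of a conical complex together with the extra requirement built into the notion of a \emph{complete normal simplicial fan}, and to settle (iii) and (iv) by an explicit construction combined with Proposition~\ref{prop-cone-positive-hull}. Throughout, let $\mathcal T$ denote the ambient complete normal simplicial fan having $\cone{\mathcal G}$ as one of its cones.

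For (i), a cone of a simplicial fan is by definition simplicial, hence generated by a linearly independent set; since $\cone{\mathcal G}$ is a member of a triangulation whose $1$-dimensional faces are exactly the rays spanned by the elements of $\gambleset$, that generating set is recovered as $\{\, g : \cone{\{g\}}\ \text{is a face of}\ \cone{\mathcal G}\,\}=\mathcal G$, which is the content already implicit in the corollary preceding this proposition. For (ii), I would argue by contradiction. Suppose $f\in\gambleset\setminus\mathcal G$ but $f\in\cone{\mathcal G}$. The ray $\cone{\{f\}}$ is, by the defining property of the complete normal simplicial fan, a $1$-dimensional face of $\mathcal T$, hence itself a cone of $\mathcal T$; as it is contained in the cone $\cone{\mathcal G}\in\mathcal T$, the conical-complex axiom that the intersection of two members is a common face forces $\cone{\{f\}}=\cone{\{f\}}\cap\cone{\mathcal G}$ to be a face of $\cone{\mathcal G}$. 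The only $1$-dimensional faces of the simplicial cone $\cone{\mathcal G}$ are the rays $\cone{\{g\}}$, $g\in\mathcal G$, so $f$ is a positive multiple of some $g\in\mathcal G$; and since two distinct, positively proportional vectors in $\gambleset$ would yield nested (hence one redundant) half-spaces in \eqref{eq-convex-polyhedron} --- a situation we may and do exclude --- we conclude $f=g\in\mathcal G$, a contradiction.

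For (iii) and (iv) I would build $\csete$ explicitly. By (i) the set $\mathcal G$ is linearly independent; write $k=|\mathcal G|\leq n$ and put $\csete_0=\{\, y\in\allgambles : yg\geq 0\ \text{for all}\ g\in\mathcal G\,\}$. The constraints active at the origin are precisely those indexed by $\mathcal G$, so Proposition~\ref{prop-cone-positive-hull} gives $\ncone{\csete_0}{0}=\cone{\mathcal G}$, whence $\cone{\mathcal G}\in\fan{\csete_0}$; if a bounded $\csete$ is wanted, intersect $\csete_0$ with a box $B$ centred at $0$ none of whose facets is active at $0$, so that the active set at $0$ in $\csete:=\csete_0\cap B$ is still $\mathcal G$ and $\ncone{\csete}{0}=\cone{\mathcal G}$ --- this proves (iv). For (iii), when $k=n$ the set $\mathcal G$ is a basis, so $\dim\cone{\mathcal G}=n$, and the last clause of Proposition~\ref{prop-cone-positive-hull} then asserts that $0$ is an extreme point of $\csete$, which is exactly what (iii) requires.

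The only genuinely delicate step is (ii): one must apply the correct complex axiom --- intersections of cones of $\mathcal T$ are common faces, so a sub-ray of $\cone{\mathcal G}$ belonging to $\mathcal T$ is a face of $\cone{\mathcal G}$ --- and must explicitly rule out the degenerate possibility that $f$ is merely a positive rescaling of a generator of $\mathcal G$. Parts (iii) and (iv) are routine; the one thing to watch there is that the auxiliary box be chosen so as not to introduce new active constraints at the distinguished vertex.
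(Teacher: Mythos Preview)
Your argument is essentially correct, but there is one genuine slip and a structural difference from the paper worth noting.

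The slip is in your derivation of (iii): you write that ``the last clause of Proposition~\ref{prop-cone-positive-hull} then asserts that $0$ is an extreme point of $\csete$''. It does not --- that clause says only that \emph{if} $x$ is an extreme point \emph{then} its normal cone has dimension $n$. You need the converse. It is true and easy (when $\mathcal G$ is a basis, the system $yg=0$ for $g\in\mathcal G$ has $0$ as its unique solution, so $0$ cannot be written as a nontrivial convex combination of other points of $\csete$), but it is not what Proposition~\ref{prop-cone-positive-hull} actually states; you should supply this one-line argument directly.

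As for the comparison with the paper: the paper dispatches (i) and (ii) in a single line as ``direct consequences of the definitions'', whereas you spell out the complex-axiom argument for (ii), which is more careful. For (iii) and (iv) the paper proceeds in the opposite order: it proves (iii) first by constructing $\csete$ from the \emph{full} constraint set $\gambleset$ --- choosing an arbitrary $x$, setting $b_f=xf$ for $f\in\mathcal G$ and $b_{f'}=xf'-1$ for $f'\in\gambleset\setminus\mathcal G$ --- shows directly that $x$ is extreme by the uniqueness argument above, and then derives (iv) from (iii) by completing $\mathcal G$ to a basis and noting that $\cone{\mathcal G}$ is a face of the resulting full-dimensional normal cone. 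Your route (build (iv) from $\mathcal G$ alone plus a bounding box, then specialise to (iii)) is equally valid; the paper's construction has the small contextual advantage that its $\csete$ is visibly of the form~\eqref{eq-convex-polyhedron} with the ambient $\gambleset$, while your auxiliary box constraints lie outside $\gambleset$.
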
 
\begin{proof}
	While (i) and (ii) are direct consequences of the definitions, we only prove (iii). Take some vector $x\in \allgambles$ and set $b_f = xf$ for every $f\in \mathcal G$ and $b_{f'} = xf' - 1$ for $f'\in \gambleset\backslash \mathcal G$. The set $\csete = \{ y \colon yf \ge b_f, y\in \allgambles, f\in \gambleset \}$ is clearly a convex subset of $\allgambles$. By the definition, $xf \ge b_f$ for all $f\in \gambleset$, whence $x\in \csete$. To see that $x$ is an extreme point, suppose it were a convex combination of two other points in $\csete$, say $u$ and $v$. By the construction, $uf = vf = b_f$ for every $f\in \mathcal G$. But the corresponding linear system has full rank and therefore $x$ is its unique solution, whence $u=v=x$. 
	Also by construction, $f\in \mathcal G$ are the only vectors in $\mathcal F$ that lie in $\ncone{\csete}{x}$, whence by Proposition~\ref{prop-cone-positive-hull}, $\ncone{\csete}{x} = \cone{\mathcal G}$, as claimed.
	 
	(iv). A linearly independent $\mathcal G$ can be completed to a basis $\mathcal G'$. By (iii), $\cone{\mathcal G}'$ is a normal cone in an extreme point. Hence, $\cone{\mathcal G}$ as its face is a normal cone as well. 
\end{proof}
\begin{definition}
	Let $\mathcal G\subseteq \gambleset$ be a basis of $\allgambles$, i.e., linearly independent with $|\mathcal G| = \dim \allgambles$. A cone of the form $\cone{\mathcal G}$, such that $f\not\in\cone{\mathcal G}$ for every $f\in\gambleset\backslash \mathcal G$, is called a \emph{maximal elementary simplicial cone (MESC)}. 
\end{definition}
\begin{example}
	Consider the polytope in Figure~\ref{fig-nc2}. Now $\gambleset$ contains an additional vector $f_6$, whose support line intersects those of $f_4$ and $f_5$ in $y$. Vector $f_6$ now lies in the relative interior of $\ncone{\mathcal K}{y} = \cone{\{f_4, f_5\}}$ and is therefore not a MESC. To meet our requirements, $\ncone{\mathcal K}{y}$ needs to be triangulated, that is subdivided in two cones. The only possible subdivision here is into $\cone{\{f_4, f_6\}}$ and $\cone{\{f_6, f_5\}}$, that both are MESCs. 
\end{example}
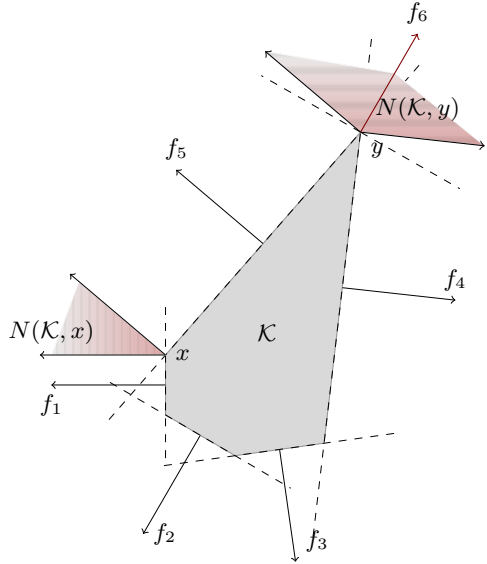
\begin{figure}
	\centering
		\begin{tikzpicture}[scale = 15]
		\coordinate (x) at (0.000,  0.000);
		\coordinate (z) at (0.577,  1.000);
		\coordinate (y) at (1.155,  0.000);
		
		
		\coordinate (E1) at (0.531,  0.280);
		\coordinate (E2) at  (0.531,  0.227);
		\coordinate (E3) at  (0.592,  0.191);
		\coordinate (E4) at (0.670,  0.202);
		\coordinate (E5) at (0.702,  0.477);
		
		\draw[gray, opacity=.8,  fill = gray, fill opacity = .3] (E1) -- (E2)--(E3)	--	(E4) --	(E5)  -- cycle;
		
		
		\node at (.62, .3) {\footnotesize $\mathcal K$}; 
		

		\coordinate (f12) at (0.100,  -0.000);
		\coordinate (f23) at (0.050,  0.087);
		\coordinate (f34) at (-0.014,  0.099);
		\coordinate (f45) at (-0.099,  0.011);
		\coordinate (f51) at (0.076,  -0.065);
		\coordinate (f65) at (-0.050,  -0.087);
		\coordinate (f65N) at (-0.087, 0.050);
		
		\draw[->, very thin] ($(E1)!.5!(E2)$) --  ++($-1*(f12)$) node[below] {\footnotesize $f_1$};
		\draw[->, very thin] ($(E2)!.5!(E3)$) --  ++($-1*(f23)$) node[right] {\footnotesize $f_2$};
		\draw[->, very thin] ($(E3)!.5!(E4)$) --  ++($-1*(f34)$) node[above right] {\footnotesize $f_3$};
		\draw[->, very thin] ($(E4)!.5!(E5)$) --  ++($-1*(f45)$) node[above] {\footnotesize $f_4$};
		\draw[->, very thin] ($(E5)!.5!(E1)$) --  ++($-1*(f51)$) node[above] {\footnotesize $f_5$};

		\draw[very thin, dashed] ($(E1)!-.8!(E2)$) -- ($(E1)!1.8!(E2)$);
		\draw[very thin, dashed] ($(E2)!-.8!(E3)$) -- ($(E2)!1.8!(E3)$);
		\draw[very thin, dashed] ($(E3)!-.8!(E4)$) -- ($(E3)!1.8!(E4)$);
		\draw[very thin, dashed] ($(E4)!-.3!(E5)$) -- ($(E4)!1.3!(E5)$);
		\draw[very thin, dashed] ($(E5)!-.3!(E1)$) -- ($(E5)!1.3!(E1)$);
		
		\fill[left color=gray!15!white, right color=gray!35!white!80!red, draw=white] (E1) --  ++($-1*(f12)$) -- ($(E1) - (f51)$) -- cycle;
		\draw[very thin, ->] (E1) --  ++($-1.1*(f12)$);
		\draw[very thin, ->] (E1) --  ++($-1.1*(f51)$);
		\node at ($(E1)+(-.1, .02)$) {\footnotesize $N(\mathcal K, x)$};
		
		\fill[top color=gray!15!white, bottom color=gray!35!white!80!red, draw=white] (E5) --  ++($-1.1*(f51)$) -- ($(E5) - 0.6*(f65)$) -- ($(E5) - 1.1*(f45)$) -- cycle;
		\draw[thin, ->] (E5) --  ++($-1.1*(f51)$);
		\draw[thin, ->] (E5) --  ++($-1.1*(f45)$);
		\node at ($(E5)+(.05, .02)$) {\footnotesize $N(\mathcal K, y)$};
		
		\node[right] at (E1) {\footnotesize $x$};
		\node[below right] at (E5) {\footnotesize $y$};
		
		\draw[->, thin, red!50!black] (E5) --  ++($-1*(f65)$) node[above, black] {\footnotesize $f_6$};
		\draw[very thin, dashed] (E5) --  ++($-1*(f65N)$);
		\draw[very thin, dashed] (E5) --  ++($(f65N)$);
	\end{tikzpicture}
	\caption{Vector $f_6$ is added to $\gambleset$ so that the corresponding support line intersects those of $f_4$ and $f_5$. $\cone{\{f_4, f_5\}}$ now contains $f_6$ in its relative interior and must therefore be triangulated into the union of MESCs $\cone{\{f_4, f_6\}}$ and $\cone{\{f_6, f_5\}}$.}\label{fig-nc2}
\end{figure}
As follows from Proposition~\ref{prop-full-triangulation}, maximal elements of complete normal simplicial fans of convex polytopes are exactly MESCs. However, this does not mean that every MESC is an element of a complete normal simplicial fan of a given polytope $\mathcal K$ generated by $\mathcal F$. There are two main reasons for this. The first reason is that a normal fan $\fan{\mathcal K}$ can have several different triangulations. The second, deeper reason is that different polyhedra in general have different normal fans, even if they are obtained by the same set of support vectors $\gambleset$. When two polyhedra have the same normal fan, they are called \emph{normally equivalent} (see \cite{de2010triangulations}). 

In the sequel, we will focus on the set of possible MESCs. We will analyze their structure in particular cases related to models of imprecise probabilities. Each complete normal simplicial fan contains MESCs together with their lower dimensional faces as building blocks. However, not all collections fit together. A useful tool for analyzing possible configurations of extreme points in polyhedra is the endowment of the graph structure to the set of its extreme points. In the next section, we build on this idea to generate a graph on the set of all MESCs that can reveal some structural properties of normal fans and, in particular, their triangulated forms.

\subsection{Adjacent cones and graph theoretical properties of maximal elementary simplicial cones}\label{ss-acgt}
A convex polytope $\mathcal K$ can be given a graph structure with the extreme points considered as vertices. An edge between two vertices is then a one-dimensional face of $\mathcal K$ with the given vertices as extreme points. In this section, we extend the graph structure to the set of MESCs. They are related to the extreme points in the sense of Proposition~\ref{prop-full-triangulation}. Any complete normal simplicial fan is a simplicial complex consisting of MESCs. The graph structure introduced in the sequel will give us some insights into the combination of MESCs to building the fans, and consequently into the possible polyhedral structures obtained in the form \eqref{eq-convex-polyhedron}. 

We begin with the notion of adjacency. Two extreme points of a convex polytope $\mathcal K$ are adjacent if they are connected by an edge. The normal cone corresponding to the edge, which is a one-dimensional face of $\mathcal K$, is a common face of the normal cones corresponding to the two extreme points. Moreover, the intersection of the two normal cones is exactly the common face of codimension 1, i.e., the dimension of the face is $n-1$, where $n$ is the dimension of the two cones, which means that their relative interiors are disjoint. We use this property as the definition of adjacency of two MESCs.
\begin{definition}
	Let $\mathcal G$ and $\mathcal G'$ be two subsets of $\gambleset$, so that the corresponding cones $\cone{\mathcal G}$ and $\cone{\mathcal G'}$ are MESCs. Then the cones are said to be adjacent if they intersect in a common face of codimension 1.  
\end{definition}
The following corollary is immediate. 
\begin{corollary}\label{cor-adjacent-mesc}
	Let cones $\cone{\mathcal G}$ and $\cone{\mathcal G'}$ be adjacent MESCs. Then $|\mathcal G\cap \mathcal G'| = |\mathcal G|-1 = |\mathcal G'|-1$. 
\end{corollary}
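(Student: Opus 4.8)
The plan is to exploit the characterization of MESCs as maximal faces of complete normal simplicial fans together with the standard combinatorics of simplicial complexes. Recall that $\mathcal G$ and $\mathcal G'$ are bases of $\allgambles$, so $|\mathcal G| = |\mathcal G'| = n$, where $n = \dim\allgambles$. Since $\cone{\mathcal G}$ and $\cone{\mathcal G'}$ are adjacent, their intersection $F = \cone{\mathcal G}\cap\cone{\mathcal G'}$ is a common face of codimension $1$, i.e.\ $\dim F = n-1$. First I would invoke the fact, recorded after Proposition~\ref{prop-full-triangulation}, that both cones are maximal simplicial cones in a complete normal simplicial fan; in such a fan a face of a simplicial cone $\cone{\mathcal G}$ is exactly $\cone{\mathcal H}$ for some $\mathcal H\subseteq\mathcal G$, and its dimension is $|\mathcal H|$ because $\mathcal G$ is linearly independent by Proposition~\ref{prop-full-triangulation}(i). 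Hence the codimension-$1$ face $F$ of $\cone{\mathcal G}$ has the form $\cone{\mathcal H}$ with $\mathcal H\subseteq\mathcal G$ and $|\mathcal H| = n-1$; likewise $F = \cone{\mathcal H'}$ with $\mathcal H'\subseteq\mathcal G'$ and $|\mathcal H'| = n-1$.

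Next I would argue that $\mathcal H = \mathcal H'$. A simplicial cone of dimension $n-1$ generated by a linearly independent set determines that generating set uniquely: the $1$-dimensional faces (extreme rays) of $\cone{\mathcal H}$ are exactly the rays $\cone{\{f\}}$ for $f\in\mathcal H$ — this is again part of the defining property of a complete normal simplicial fan, that each generator spans a genuine $1$-dimensional face rather than lying in the relative interior. Since the extreme rays of $F$ are intrinsic to $F$, the two descriptions must give the same set of rays, hence $\mathcal H = \mathcal H'$. Therefore $\mathcal H\subseteq\mathcal G\cap\mathcal G'$ and $|\mathcal G\cap\mathcal G'|\geqslant n-1$.

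For the reverse inequality, suppose $|\mathcal G\cap\mathcal G'| = n$; then $\mathcal G = \mathcal G'$ since both have cardinality $n$, contradicting that the two cones are distinct (adjacency presupposes two distinct cones, and equal generating sets give equal cones). Hence $|\mathcal G\cap\mathcal G'|\leqslant n-1$, and combining with the previous paragraph gives $|\mathcal G\cap\mathcal G'| = n-1 = |\mathcal G|-1 = |\mathcal G'|-1$, as claimed.

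The main obstacle, such as it is, is making precise the claim that the codimension-$1$ face of a simplicial cone in a complete normal simplicial fan is itself simplicial with a canonically determined generating subset, and that this subset is recoverable from the face alone (so that the $\mathcal G$-side and $\mathcal G'$-side descriptions coincide). This rests on Proposition~\ref{prop-full-triangulation} and the remarks following it — that $\mathcal G$ is linearly independent and that no generator lies in the relative interior of a larger cone of the fan — so it is essentially bookkeeping rather than a substantive difficulty; one should just be careful that "face of codimension $1$" is interpreted as a face in the fan-theoretic sense, so that it really is one of the $\cone{\mathcal H}$ with $\mathcal H$ a subset of the generators, and not merely some $(n-1)$-dimensional convex subset of the boundary.
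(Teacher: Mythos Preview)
Your argument is correct. The paper itself offers no proof at all --- it simply declares the corollary ``immediate'' following the definition of adjacency --- so there is nothing to compare against beyond noting that you have spelled out what the author leaves implicit.

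One small remark: you lean on the fan structure (``both cones are maximal simplicial cones in a complete normal simplicial fan'') more than is needed. The relevant facts --- that every face of a simplicial cone $\cone{\mathcal G}$ is $\cone{\mathcal H}$ for some $\mathcal H\subseteq\mathcal G$ with $\dim\cone{\mathcal H}=|\mathcal H|$, and that the extreme rays of a simplicial cone recover its generating set up to positive scalars --- hold for any cone generated by a linearly independent set, which is guaranteed directly by the definition of a MESC. The passage from ``same extreme rays'' to ``$\mathcal H=\mathcal H'$ as subsets of $\gambleset$'' then uses the MESC condition itself: if $h\in\mathcal H$ and $h'\in\mathcal H'$ span the same ray with $h\neq h'$, then $h'\in\cone{\mathcal G}$ while $h'\in\gambleset\setminus\mathcal G$ (since $\mathcal G$ is linearly independent), contradicting the defining property of $\cone{\mathcal G}$ as a MESC. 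You gesture at this, but it is cleaner to invoke the MESC condition directly rather than the ambient fan, particularly since two arbitrary MESCs need not belong to the \emph{same} complete normal simplicial fan.
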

\begin{example}
	In Figure~\ref{fig-nc1}, cones $\ncone{\mathcal K}{x} = \cone{\{ f_1, f_5\}}$ and $\ncone{\mathcal K}{y} = \cone{\{ f_4, f_5\}}$ are adjacent MESCs, and $ \{ f_1, f_5\}\cap \{ f_4, f_5\} = \{ f_5\}$ is the generator set of the common face. 
\end{example}
However, the converse of the above corollary is not true. That is, if we have two sets $\mathcal G$ and $\mathcal G'$ that differ in one element, the corresponding cones are not necessarily adjacent. The reason is that their intersection can be larger than the common face. In Figure~\ref{fig-intersections}, two pairs of polytopes are shown, one of which has an intersection greater than the common face and the other intersects exactly in the common face. This situation can be easily applied to the case of cones by considering the cones whose intersections with a plane correspond to the polytopes shown. The following lemma contains the necessary and sufficient conditions for adjacency.
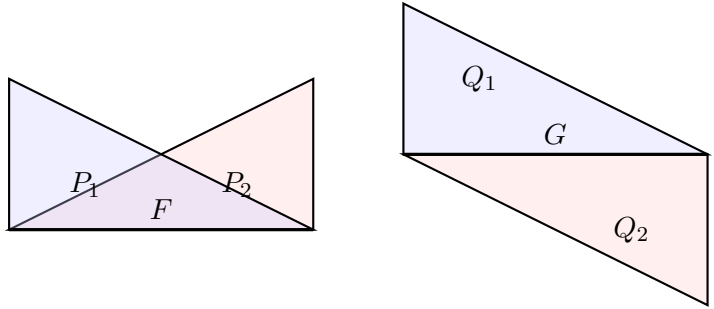
\begin{figure}
	\centering
	\begin{minipage}[c]{.4\textwidth}
	\begin{tikzpicture}[scale = 2]
		
		
		\coordinate (A) at (0,  0);
		\coordinate (B) at  (2,  0);
		\coordinate (C) at  (2, 1);
		\coordinate (D) at (0, 1);
		\coordinate (E) at (1, 0.5);
		
		\draw[thick, fill = red!20!white, fill opacity=0.30] (A)--(B)--(C)--cycle; 
		\draw[thick, fill = blue!20!white, fill opacity=0.30] (A)--(B)--(D)--cycle; 
		\draw[very thick] (A)--(B);
		
		\node[above] at (1, 0) {$F$};
		
		\node at (.5, .3) {$P_1$};
		\node at (1.5, .3) {$P_2$};
	\end{tikzpicture}
\end{minipage}
\begin{minipage}[c]{.4\textwidth}
\begin{tikzpicture}[scale = 2]
	
	
	\coordinate (A) at (0,  0);
	\coordinate (B) at  (2,  0);
	\coordinate (C) at  (2, -1);
	\coordinate (D) at (0, 1);
	\coordinate (E) at (1, 0.5);
	
	\draw[thick, fill = red!20!white, fill opacity=0.30] (A)--(B)--(C)--cycle; 
	\draw[thick, fill = blue!20!white, fill opacity=0.30] (A)--(B)--(D)--cycle; 
	\draw[very thick] (A)--(B);
	
	\node[above] at (1, 0) {$G$};
	
	\node at (.5, .5) {$Q_1$};
	\node at (1.5, -.5) {$Q_2$};
\end{tikzpicture}
\end{minipage}
	\caption{The intersection of $P_1$ and $P_2$ is larger than their common face $F$, while $Q_1$ and $Q_2$ intersect exactly in the common face $G$. }\label{fig-intersections}
\end{figure}
\begin{lemma}\label{lem-adjacent-cone}
	Let cones $\cone{\mathcal G}$ and $\cone{\mathcal G'}$ be MESCs such that $|\mathcal G\cap \mathcal G'| = |\mathcal G|-1 = |\mathcal G'|-1$. Let $\mathcal H$ be the hyperplane generated by $\mathcal G\cap \mathcal G'$ and $t$ its normal (non-zero) vector. Further let $f\in \mathcal G\backslash \mathcal G'$ and $f'\in \mathcal G'\backslash \mathcal G$. Then $\cone{\mathcal G}$ and $\cone{\mathcal G'}$ are adjacent if and only if $(f\cdot t)(f'\cdot t) < 0$, i.e. the scalar products with the normal vector have opposite signs.  
\end{lemma}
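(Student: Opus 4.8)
The plan is to work with the common face $\mathcal S = \cone{\mathcal G \cap \mathcal G'}$, which by hypothesis is $(n-1)$-dimensional (since $\mathcal G \cap \mathcal G'$ has $n-1$ elements and each MESC is simplicial), and to analyze on which side of the hyperplane $\mathcal H$ each of the two cones lies. Write $\mathcal G = (\mathcal G \cap \mathcal G') \cup \{f\}$ and $\mathcal G' = (\mathcal G \cap \mathcal G') \cup \{f'\}$. First I would observe that since $\mathcal G$ is a basis, $f \notin \mathcal H$, so $f\cdot t \neq 0$; similarly $f' \cdot t \neq 0$. After possibly replacing $t$ by $-t$, assume $f \cdot t > 0$. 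The elements of $\mathcal G \cap \mathcal G'$ all lie in $\mathcal H$, so every vector in $\cone{\mathcal G}$ has the form $\lambda f + h$ with $\lambda \geq 0$ and $h \in \cone{\mathcal G \cap \mathcal G'} \subseteq \mathcal H$; dotting with $t$ gives $t$-component $\lambda (f\cdot t) \geq 0$. Hence $\cone{\mathcal G}$ lies in the closed halfspace $\{ y : y \cdot t \geq 0 \}$, and meets $\mathcal H$ exactly in the face $\mathcal S$ (the points with $\lambda = 0$). Likewise $\cone{\mathcal G'}$ lies in the halfspace $\{ y : (y\cdot t)(f'\cdot t) \geq 0\}$ and meets $\mathcal H$ exactly in $\mathcal S$.

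For the backward direction, suppose $(f\cdot t)(f'\cdot t) < 0$, i.e. $f'\cdot t < 0$. Then $\cone{\mathcal G} \subseteq \{y\cdot t \geq 0\}$ and $\cone{\mathcal G'} \subseteq \{y\cdot t \leq 0\}$, so $\cone{\mathcal G}\cap\cone{\mathcal G'} \subseteq \mathcal H$. Combined with the observation above that each cone meets $\mathcal H$ precisely in $\mathcal S$, we get $\cone{\mathcal G}\cap\cone{\mathcal G'} = \mathcal S$, which is a common face of codimension $1$; hence the cones are adjacent. For the forward direction, argue contrapositively: if $(f\cdot t)(f'\cdot t) > 0$, i.e. $f'\cdot t > 0$, then both $f$ and $f'$ lie strictly on the same side of $\mathcal H$. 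I would then exhibit a point in $\cone{\mathcal G}\cap\cone{\mathcal G'}$ that is not in $\mathcal S$, showing the intersection is strictly larger than the common codimension-$1$ face and so the cones are not adjacent. The natural candidate is to find $h_1, h_2 \in \cone{\mathcal G\cap\mathcal G'}$ and $\lambda, \mu > 0$ with $\lambda f + h_1 = \mu f' + h_2$; equivalently $\lambda f - \mu f' \in \mathrm{span}(\mathcal G\cap\mathcal G')$ with the resulting combination expressible with nonnegative coefficients on both sides. Since $f$ and $f'$ are on the same side of $\mathcal H$, the segment from (a positive multiple of) $f$ to (a positive multiple of) $f'$ can be chosen to cross $\cone{\mathcal G\cap\mathcal G'}$; concretely, scale so that $f$ and $f'$ have the same $t$-component, then $f - f' \in \mathcal H$, and I would need the $\mathcal S$-coordinates to come out nonnegative — this may require first replacing $f, f'$ by suitable positive multiples and using that $\mathcal S$ is a full-dimensional cone inside $\mathcal H$, so a generic direction in $\mathcal H$ has a positive representative after adding a large interior element of $\mathcal S$ to both sides.

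The main obstacle I anticipate is exactly this last point: producing an explicit element of the intersection lying outside $\mathcal S$ when $f, f'$ are on the same side. One must be careful that the "midpoint" construction genuinely yields nonnegative coefficients; the cleanest route is probably to note that $\cone{\mathcal G \cap \mathcal G'}$ is $(n-1)$-dimensional, hence has nonempty relative interior in $\mathcal H$, pick $h_0 \in \ri{\cone{\mathcal G\cap\mathcal G'}}$, and observe that for large $c > 0$ both $f + c\,h_0$ and $f' + c\,h_0$, after rescaling to a common $t$-component, differ by an element of $\mathcal H$ whose $\mathcal S$-coordinates are dominated by $c\,h_0$; adjusting $c$ makes all coordinates nonnegative, giving a shared point with $\lambda, \mu > 0$. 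An alternative, more conceptual phrasing: two simplicial cones sharing a codimension-$1$ face $\mathcal S$ have disjoint relative interiors iff they lie in opposite closed halfspaces determined by $\mathrm{aff}(\mathcal S) = \mathcal H$, and the side of $\cone{\mathcal G}$ is detected by the sign of $f \cdot t$; I would cite or quickly reprove this halfspace dichotomy rather than manipulate coordinates, which keeps the argument short and matches the style of the surrounding material.
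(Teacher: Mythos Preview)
Your proposal is correct and follows essentially the same route as the paper. Both arguments decompose $f$ and $f'$ relative to the hyperplane $\mathcal H$ and reduce the question to the sign of the $t$-component; your ``add a large interior element $c\,h_0$ of $\mathcal S$ to force nonnegative coefficients'' is precisely the paper's trick of choosing the coefficients $\gamma_i$ large enough so that $\delta_i = \gamma_i + \gamma\alpha_i - \delta\beta_i > 0$. The main stylistic difference is that your backward direction is phrased geometrically via halfspaces (each cone lies in one closed halfspace of $\mathcal H$ and meets $\mathcal H$ exactly in $\mathcal S$), which in fact verifies directly that the intersection \emph{equals} the codimension-$1$ face, whereas the paper works in coordinates and shows only that the relative interiors are disjoint, leaving the identification of the intersection with $\mathcal S$ implicit. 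Either presentation is fine here; yours is slightly more self-contained on that point.
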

\begin{proof}
	Denote the vectors in $\mathcal G\cap \mathcal G'$ with $f_1, \ldots, f_{n-1}$ and without loss of generality we can assume that $\| t \| = 1$. By definition, $t\cdot f_i = 0$ for $i=1, \ldots, n-1$ and $\{f_1, \ldots, f_{n-1},t \}$ forms a basis of $\allgambles$. We can therefore write 
	\begin{align}
		f & = \sum_{i=1}^{n-1} \alpha_i f_i + \alpha t, \\
		f' & = \sum_{i=1}^{n-1} \beta_i f_i + \beta t.
	\end{align}
	We have that $f\cdot t = \alpha \| t \|^2 = \alpha$ and $f'\cdot t = \beta$. 
	
	We first prove that $\ri{\cone{\mathcal G}}\cap \ri{\cone{\mathcal G'}} \neq \emptyset$ implies that $\alpha\beta > 0$. Let $h\in \ri{\cone{\mathcal G}}\cap \ri{\cone{\mathcal G'}}$. Then we have that for some $\gamma_i>0, \delta_i>0$ for $i=1, \ldots, n-1$ and $\gamma, \delta > 0$,
	\begin{align}
		\label{eq-adj-t1}h & = \sum_{i=1}^{n-1} \gamma_i f_i + \gamma f = \sum_{i=1}^{n-1} \gamma_i f_i + \gamma \left(\sum_{i=1}^{n-1} \alpha_i f_i + \alpha t\right) = \sum_{i=1}^{n-1} \gamma'_i f_i + \gamma \alpha t 
		\intertext{and}
		\label{eq-adj-t2}h & = \sum_{i=1}^{n-1} \delta_i f_i + \delta f' = \sum_{i=1}^{n-1} \delta_i f_i + \delta \left(\sum_{i=1}^{n-1} \beta_i f_i + \beta t\right) = \sum_{i=1}^{n-1} \delta'_i f_i + \delta \beta t. 
	\end{align}
	By the uniqueness of linear combinations of the basis vectors, we obtain that $\gamma\alpha = \delta\beta$. Now, since $\gamma$ and $\delta$ are strictly positive, $\alpha$ and $\beta$ must be of equal sign. This completes the first part of the proof. 
	
	The second part of the proof is to show conversely that $\alpha \beta > 0$ implies $\ri{\cone{\mathcal G}}\cap \ri{\cone{\mathcal G'}} \neq \emptyset$. To see this, we only need to show that an $h$ of the form \eqref{eq-adj-t1} and \eqref{eq-adj-t2} exists. Suppose $\gamma_i > 0$ and $\gamma > 0$ are given. Then we set $\delta = \frac{\alpha}{\beta}\gamma$, which is of positive sign because of the equal sign of $\alpha$ and $\beta$. Now for every $i$ equating both expression gives:
	\begin{equation}
		\gamma'_i = \gamma_i + \gamma \alpha_i = \delta'_i = \delta_i + \delta \beta_i. 
	\end{equation}
	Solving the above equation for $\delta_i$ gives $\delta_i = \gamma_i + \gamma \alpha_i - \delta \beta_i$, which for sufficiently large $\gamma_i$ can be made positive for every $i=1, \ldots, n-1$. This concludes the proof of the proposition.  
\end{proof}  
\begin{corollary}\label{cor-adjacent-extreme-points}
	Let $\mathcal K$ be a polytope and $\mathcal T$ its complete normal simplicial fan. Let $C, C'\in \mathcal T$ be a pair of adjacent MESCs. Then exactly one of the following holds:
	\begin{enumerate}[(i)]
		\item An extreme point $x\in \mathcal K$ exists such that $C, C'\subseteq \ncone{\mathcal K}{x}$.
		\item A pair of extreme points $x, x'$ exists that lie in a common 1-dimensional face of $\mathcal T$ such that $C\subseteq \ncone{\mathcal K}{x}$ and $C'\subseteq \ncone{\mathcal K}{x'}$. 
	\end{enumerate} 
\end{corollary}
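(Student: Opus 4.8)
The plan is to exploit the fact that $C$ and $C'$ are adjacent MESCs inside a complete normal simplicial fan $\mathcal T$ of $\csete$, so each of them sits inside a unique maximal normal cone of $\fan{\csete}$, i.e.\ inside the normal cone of some extreme point. Write $C = \cone{\mathcal G}$ and $C' = \cone{\mathcal G'}$ with $|\mathcal G\cap\mathcal G'| = n-1$ (Corollary~\ref{cor-adjacent-mesc}), and let $\mathcal H$ be the hyperplane spanned by $\mathcal G\cap\mathcal G'$. Since $\mathcal T$ triangulates $\fan{\csete}$, there are extreme points $x, x'\in\csete$ (not necessarily distinct) with $C\subseteq\ncone{\csete}{x}$ and $C'\subseteq\ncone{\csete}{x'}$; this is the starting point, and the whole argument is about distinguishing the cases $x = x'$ and $x\neq x'$, and showing no other possibility occurs and that in the second case $x,x'$ lie on a common edge of $\csete$.

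First I would handle case (i): if $x = x'$, then $C, C'\subseteq\ncone{\csete}{x}$ and we are done. So assume $x\neq x'$. The key object is the common codimension-1 face $F = \cone{\mathcal G\cap\mathcal G'}$ of $C$ and $C'$, which by adjacency equals $C\cap C'$. I would argue that $F\subseteq\ncone{\csete}{x}\cap\ncone{\csete}{x'}$, and that the intersection of two distinct normal cones of $\csete$ is itself a normal cone, namely $\ncone{\csete}{G}$ for the face $G$ of $\csete$ that is the smallest face containing both $x$ and $x'$ — this is the standard correspondence between faces of $\csete$ and cones of $\fan{\csete}$ (order-reversing), which follows from Proposition~\ref{prop-cone-positive-hull}. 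Hence $\dim\ncone{\csete}{G}\geq\dim F = n-1$. On the other hand, $G$ is a proper face of $\csete$ strictly containing the two distinct vertices $x, x'$, so $\dim G\geq 1$, forcing $\dim\ncone{\csete}{G}\leq n-1$. Therefore $\dim G = 1$ and $\dim\ncone{\csete}{G} = n-1$: $G$ is exactly a $1$-dimensional face of $\csete$ whose endpoints are $x$ and $x'$, and its normal cone is $F$, a $1$-dimensional face of $\mathcal T$ (spanned by $\mathcal G\cap\mathcal G'$). That is precisely conclusion (ii).

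The two cases are mutually exclusive: in case (i) the relative interiors of $C$ and $C'$ both meet $\ri{\ncone{\csete}{x}}$, whereas in case (ii) they lie in the normal cones of two distinct extreme points, whose relative interiors are disjoint — so $C$ and $C'$ cannot both lie in a single $\ncone{\csete}{x}$. I would also remark that these are the only possibilities, since $C'$ lies in $\ncone{\csete}{x'}$ for exactly one extreme point $x'$ (maximal cones of $\fan{\csete}$ correspond bijectively to vertices of $\csete$), and either $x' = x$ or $x'\neq x$.

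The main obstacle I anticipate is making the face–cone correspondence rigorous at the level needed here — in particular, that $\ncone{\csete}{x}\cap\ncone{\csete}{x'}$ is the normal cone of the smallest face of $\csete$ containing $x$ and $x'$, and that its dimension equals $n$ minus the dimension of that face. These are classical facts about normal fans, but they are not spelled out in the excerpt, so I would either cite them from \cite{lu2008normal} or \cite{Ziegler2012} or give a short self-contained derivation from Proposition~\ref{prop-cone-positive-hull}: for $y$ in the relative interior of a face $G$ of $\csete$, the active set $\mathcal G(y)$ is constant and $\ncone{\csete}{y} = \cone{\mathcal G(y)}$ has dimension $n - \dim G$; and $\mathcal G(y) = \bigcap$ of the active sets at the vertices of $G$, which gives the intersection formula. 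The rest is a dimension count.
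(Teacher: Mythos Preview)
Your proof is correct and follows essentially the same line as the paper's. Both arguments start by placing $C=\cone{\mathcal G}$ and $C'=\cone{\mathcal G'}$ inside normal cones $\ncone{\csete}{x}$ and $\ncone{\csete}{x'}$ of extreme points, split on $x=x'$ versus $x\neq x'$, and then show in the second case that $x,x'$ lie on a common edge of $\csete$ via a dimension count on $\mathcal G\cap\mathcal G'$.

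The only noteworthy difference is packaging. You invoke the order-reversing face--cone correspondence of the normal fan to conclude that $\ncone{\csete}{x}\cap\ncone{\csete}{x'}$ is the normal cone of the smallest face $G$ of $\csete$ containing $x$ and $x'$, and then bound $\dim G$ from the fact that this cone contains the $(n-1)$-dimensional face $\cone{\mathcal G\cap\mathcal G'}$. The paper bypasses that abstraction and argues directly from Proposition~\ref{prop-cone-positive-hull}: both $x$ and $x'$ satisfy the $n-1$ independent equations $yf=b_f$, $f\in\mathcal G\cap\mathcal G'$, so the face of $\csete$ they determine lies in a one-dimensional affine subspace and is therefore an edge. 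This is exactly the ``short self-contained derivation'' you anticipate needing, so in effect the paper already takes the shortcut you flag as the main obstacle. Your extra paragraph on mutual exclusivity is a welcome addition; the paper leaves that implicit.
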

\begin{proof}
	Assume $\mathcal K$ in the form \eqref{eq-convex-polyhedron}. Cones $C$ and $C'$ are then of the form $\cone{\mathcal{G}}$ and $\cone{\mathcal{G}'}$ respectively where both $\mathcal G$ and $\mathcal G'$ are bases of $\allgambles$. Moreover, both $C$ and $C'$ lie within single normal cones -- not necessarily the same, that correspond to extreme points, say $x$ and $x'$. If $x=x'$, then (i) holds. Now assume that $x\neq x'$. By adjacency, we have that $|\mathcal G \cap \mathcal G'| = n-1$. Normal cone containing $\cone{\mathcal G \cap \mathcal G'}$ corresponds to a face $F$ of $\mathcal K$ satisfying $yf = b_f$ for every $f\in \mathcal G \cap \mathcal G'$ and every $y\in F$. The face $F$ is therefore a subset of the set of solutions of $n-1$ independent linear equations, which is a 1-dimensional subspace of $\allgambles$. Consequently, $F$ must be an at most 1-dimensional face containing $x$ and $x'$, which are both particular solutions of the same set of linear equations. As assumed, $x\neq x'$, which implies that $F$ is exactly one dimensional face containing both points. 
\end{proof}
\begin{example}
	\begin{enumerate}[(i)]
		\item Consider the polytope $\mathcal K$ in Figure~\ref{fig-nc1}. The cones $\ncone{\mathcal K}{x}$ and $\ncone{\mathcal K}{y}$ are adjacent, meeting in $\cone{\{f_5\}}$ that is their common face. Notice that they correspond to adjacent extreme points $x$ and $y$.
		\item Cones $\cone{\{f_4, f_6\}}$ and $\cone{\{f_6, f_5\}}$ in Figure~\ref{fig-nc2} are adjacent as well, yet they correspond to the same extreme point $y$. 
	\end{enumerate}
	
\end{example}
With $\mathbb{F}$ we now denote the set of all MESCs. This set represents all possible maximal simplicial cones of polyhedra in the form \eqref{eq-convex-polyhedron}. Let $G = (\mathbb{F}, \mathbb{E})$ be the graph with the set of vertices $\mathbb{F}$ and the set of edges $\mathbb{E} = \{ (C, C')\colon C, C'\in \mathbb{F}, C \text{ and } C' \text{ are adjacent}\}$.  
\begin{proposition}
	Let $\mathcal T$ be a complete normal simplicial fan of a polytope of the form \eqref{eq-convex-polyhedron} and let $\mathbb{T}$ denote the set of its MESCs. Then the subgraph $G(\mathcal T)$ of $G(\mathbb{F}, \mathbb{E})$ with vertices $\mathbb{T}$ is connected and $n$-regular. 
\end{proposition}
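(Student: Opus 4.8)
The plan is to prove $n$-regularity and connectedness separately, both resting on one local (\emph{pseudomanifold}) fact about the complete simplicial fan $\mathcal T$: every $(n-1)$-dimensional cone of $\mathcal T$ is a facet of exactly two of its $n$-dimensional cones. First note that $G(\mathcal T)$ is genuinely the subgraph of $G(\mathbb F,\mathbb E)$ induced on $\mathbb T$, since each $C\in\mathbb T$ is a MESC: by the construction of $\mathcal T$ it equals $\cone{\mathcal G}$ for some linearly independent $\mathcal G\subseteq\gambleset$, Proposition~\ref{prop-full-triangulation} gives $f\notin\cone{\mathcal G}$ for every $f\in\gambleset\setminus\mathcal G$, and $\dim\cone{\mathcal G}=n$ forces $|\mathcal G|=n$.

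\emph{The local fact.} Fix $C=\cone{\mathcal G}\in\mathbb T$ with $\mathcal G=\{f_1,\dots,f_n\}$; as a simplicial cone, $C$ has exactly the facets $F_i=\cone{\mathcal G\setminus\{f_i\}}$, each spanning a hyperplane $\mathcal H_i=\mathrm{span}(\mathcal G\setminus\{f_i\})$. Choose $p\in\ri{F_i}$. In the coordinates dual to the basis $\mathcal G$, on a small enough ball $U\ni p$ the coordinates indexed by $\mathcal G\setminus\{f_i\}$ stay strictly positive, so $C\cap U$ equals one of the two closed half-balls of $U$ cut off by $\mathcal H_i$. Moreover every cone $D\in\mathcal T$ through $p$ satisfies $D\cap F_i=F_i$ — it is a face of $F_i$ containing the relative-interior point $p$ — so $F_i$ is a face of $D$, and the same computation applied to the simplicial generators of $D$ shows that every full-dimensional such $D$ likewise covers one closed half-ball of $U$ bounded by $\mathcal H_i$. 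Since distinct cones of a fan have disjoint relative interiors, at most one $n$-cone of $\mathcal T$ meets a given open half-ball of $U$, while a finite union of lower-dimensional cones cannot cover that half-ball; hence exactly one $n$-cone lies on each side of $\mathcal H_i$. Consequently $F_i$ is a facet of exactly two cones of $\mathbb T$, namely $C$ and one other cone $C_i$, and $C\cap C_i$ — a common face of both (fan axiom), a proper face of $C$ (otherwise $C\subseteq C_i$ and, both being $n$-dimensional, $C=C_i$), and containing the $(n-1)$-dimensional $F_i$ — must equal $F_i$; thus $C$ and $C_i$ are adjacent.

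\emph{$n$-regularity.} The cones $C_1,\dots,C_n$ are pairwise distinct: if $C_i=C_j$ with $i\ne j$, this cone contains $F_i\cup F_j$, hence $\cone{(\mathcal G\setminus\{f_i\})\cup(\mathcal G\setminus\{f_j\})}=\cone{\mathcal G}=C$, and being $n$-dimensional it equals $C$, contradicting $C\ne C_i$. Conversely, any $C'\in\mathbb T$ adjacent to $C$ meets $C$ in a codimension-$1$ face of $C$, which is one of the $F_i$; then $C'$ is a cone of $\mathbb T$ distinct from $C$ with $F_i$ as a facet, so $C'=C_i$. Hence $C$ has exactly the neighbours $C_1,\dots,C_n$ in $G(\mathcal T)$, which is therefore $n$-regular.

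\emph{Connectedness.} Let $C,C'\in\mathbb T$, pick $p\in\mathrm{int}\,C$ and $p'\in\mathrm{int}\,C'$, and join them by a piecewise-linear path $\gamma$. Perturbing its interior vertices generically — a generic polygonal path avoids each of the finitely many cones of $\mathcal T$ of dimension $\le n-2$ and meets each hyperplane spanned by an $(n-1)$-cone of $\mathcal T$ transversally in finitely many points — we may assume $\gamma$ misses all cones of $\mathcal T$ of dimension $\le n-2$ and lies in the relative interior of an $(n-1)$-cone of $\mathcal T$ at only finitely many parameter values. Between two consecutive such values, $\gamma$ avoids every cone of $\mathcal T$ of dimension $\le n-1$, hence lies in the disjoint open union $\bigcup_{D\in\mathbb T}\mathrm{int}\,D$, and being connected it stays in a single $\mathrm{int}\,D$. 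Where $\gamma$ meets the relative interior of an $(n-1)$-cone $F$, the local analysis shows that the only $n$-cones of $\mathcal T$ touching a neighbourhood of that point are the two having $F$ as a facet, so $\gamma$ passes from one to the other, and these are adjacent. Reading off the $n$-cones traversed by $\gamma$ thus yields a walk from $C$ to $C'$ in $G(\mathcal T)$, proving connectedness. The single delicate ingredient is the local pseudomanifold fact; granting it, the rest is routine — and the degenerate cases $n\le 1$ (for $n=1$, $\mathcal T$ is two opposite rays and $G(\mathcal T)=K_2$) are immediate.
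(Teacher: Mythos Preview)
Your proof is correct and follows the same high-level line as the paper --- each $n$-dimensional simplicial cone has $n$ facets, each shared with exactly one other $n$-cone because the fan is complete, and connectedness comes from passing between adjacent cones along a path. Your treatment is considerably more rigorous than the paper's two-sentence sketch: the local pseudomanifold analysis, the check that the $n$ neighbours $C_1,\dots,C_n$ are pairwise distinct, and the generic-path argument for connectedness all supply details the paper simply asserts.
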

\begin{proof}
	To see that the graph is $n$-regular, recall that $|\mathbb{T}| = \allgambles$. Every cone $C_T\in \mathbb{T}$ is simplicial and thus has exactly $n$ faces of codimension 1. As there are no border faces, every such face is then also a part of another, adjacent cone. 
	
	It is also clear that every two simplicial cones are connected by a chain of cones where each one has a common face of codimension 1 with the neighbours in the chain. 
\end{proof}
Every MESC $\cone{\mathcal G}$ defines a unique vector $x$ satisfying $xf = b_f$ for every $f\in \mathcal G$. Not every such vector however is an extreme point in $\mathcal K$. A characterization follows. 
\begin{proposition}
	A MESC $\cone{\mathcal G}$ lies within the normal cone of an extreme point $x\in \mathcal K$ of the form \eqref{eq-convex-polyhedron} if and only if the following conditions are satisfied:
	\begin{enumerate}[(i)]
		\item $xf = b_f$ for every $f\in \mathcal G$;
		\item $xf \ge b_f$ for every $f\in \mathcal F$.
	\end{enumerate}
\end{proposition}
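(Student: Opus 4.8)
The plan is to read the statement with $x$ standing for the vector canonically attached to the MESC $\cone{\mathcal G}$ --- the unique solution of the system $\{yf=b_f\colon f\in\mathcal G\}$ identified in the paragraph preceding the proposition. Condition~(i) then merely records how $x$ is obtained, while condition~(ii) says that $x$ is feasible, i.e. $x\in\csete$. With this understanding the whole argument reduces to an application of Proposition~\ref{prop-cone-positive-hull}, and it is the ``if'' direction that carries all the content.

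For the ``if'' direction, assume (i) and (ii). By (ii) we have $x\in\csete$, so the active set $\mathcal G(x)=\{f\in\gambleset\colon xf=b_f\}$ is defined, and by (i) it satisfies $\mathcal G\subseteq\mathcal G(x)$. Proposition~\ref{prop-cone-positive-hull} then gives $\ncone{\csete}{x}=\cone{\mathcal G(x)}\supseteq\cone{\mathcal G}$, so the MESC lies within the normal cone at $x$. It remains to show that $x$ is an extreme point, and here I would repeat the argument used in the proof of Proposition~\ref{prop-full-triangulation}(iii): let $F$ be the face of $\csete$ obtained by turning each inequality $yf\ge b_f$, $f\in\mathcal G$, into an equality. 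Then $x\in F$; but since $\cone{\mathcal G}$ is a MESC, $\mathcal G$ is a basis of $\allgambles$, so the linear system $yf=b_f$ ($f\in\mathcal G$) has full rank and $x$ is its only solution, whence $F=\{x\}$. Finally a singleton face is an extreme point: if $x=\lambda u+(1-\lambda)v$ with $u,v\in\csete$ and $0<\lambda<1$, then for each $f\in\mathcal G$ the identity $b_f=xf=\lambda\,uf+(1-\lambda)\,vf\ge b_f$ forces $uf=vf=b_f$, so $u,v\in F=\{x\}$ and $u=v=x$. Hence $x$ is an extreme point of $\csete$.

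For the ``only if'' direction, suppose $x$ (the canonical vector above) is an extreme point of $\csete$ with $\cone{\mathcal G}\subseteq\ncone{\csete}{x}$. Since $x\in\csete$ and $\csete$ is given by \eqref{eq-convex-polyhedron}, we have $xf\ge b_f$ for every $f\in\gambleset$, which is (ii); and (i) holds by the choice of $x$.

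The step I expect to be the only genuine subtlety is the identification of $x$: the proposition must be understood with $x$ the distinguished vertex determined by $\mathcal G$, and not as an arbitrary extreme point whose normal cone happens to contain $\cone{\mathcal G}$ --- the latter reading would fail, since a MESC can sit inside the normal cone of an extreme point at which some $f\in\mathcal G$ is slack. Once $x$ is fixed this way, the ``only if'' direction is just unwinding definitions, while in the ``if'' direction the MESC hypothesis is used solely to ensure that $\mathcal G$ is a basis, which is exactly what turns the feasible point $x$ into a vertex.
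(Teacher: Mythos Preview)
Your proof is correct and follows the same line as the paper's: condition~(ii) gives $x\in\csete$, and condition~(i) together with the fact that $\mathcal G$ is a basis forces the face through $x$ to be zero-dimensional, hence $x$ is extreme. The paper's own proof is terser---it records only the ``if'' direction and does not spell out that $\cone{\mathcal G}\subseteq\ncone{\csete}{x}$---so your version is more complete; your reading of $x$ as the canonical solution of $\{yf=b_f:f\in\mathcal G\}$ is exactly the one set up in the paragraph immediately preceding the proposition, and with that reading (i) is indeed a definitional tautology while (ii) carries the content.
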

\begin{proof}
	Condition (ii) implies that $x\in \mathcal K$, while condition (i) implies that it is a face of dimension 0. The dimension follows by linear independence of $\mathcal G$. Hence, $x$ is an extreme point in $\mathcal K$.  
\end{proof}
Now let $\mathcal K$ be given and let $\cone{\mathcal G}$ be a MESC within a normal cone $\ncone{\mathcal K}{x}$. The set $\mathbb{F}$ contains all MESCs corresponding to extreme points in $\mathcal K$, but not every cone in $\mathbb{F}$ corresponds to an extreme point. Additionally, we know that those cones in $\mathbb{F}$ that are adjacent to $\cone{\mathcal G}$ are exactly those corresponding to the same extreme point $x$ or an adjacent one, as follows by Corollary~\ref{cor-adjacent-extreme-points}. Moreover, every adjacent MESC is of the form $\cone{\mathcal G'}$, where, by Corollary~\ref{cor-adjacent-mesc}, $\mathcal G' = (\mathcal G\backslash\{ f\})\cup \{f'\}$. The following proposition shows that given $f\in \mathcal G$ adjacent cones must correspond to the same adjacent extreme point. 
\begin{proposition}
	Let MESC $\cone{\mathcal G}$ lie within a normal cone $\ncone{\mathcal K}{x}$ and take some $f\in \mathcal G$. Now suppose that for some $f'$ and $f''\in \mathcal F$ both cones $\mathcal G' = (\mathcal G\backslash\{ f\})\cup \{f'\}$ and $\mathcal G'' = (\mathcal G\backslash\{ f\})\cup \{f''\}$ are MESC and they lie within normal cones $\ncone{\mathcal K}{x'}$ and $\ncone{\mathcal K}{x''}$. Then $x' = x''$. (Possibly, $x'=x'' = x$.)
\end{proposition}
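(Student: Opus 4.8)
The plan is to collapse the claim to a one‑dimensional statement along the line carrying $x$, $x'$ and $x''$, and then read off the conclusion from the sign information supplied by Lemma~\ref{lem-adjacent-cone}. I would read the statement with the standing assumption from the preceding discussion that $\cone{\mathcal G'}$ and $\cone{\mathcal G''}$ are adjacent to $\cone{\mathcal G}$: the adjacency is what makes Lemma~\ref{lem-adjacent-cone} applicable, and it is genuinely needed, since otherwise a non‑adjacent $\cone{\mathcal G'}$ sharing the facet $\cone{\mathcal G\setminus\{f\}}$ can sit inside $\ncone{\csete}{x}$ while $\cone{\mathcal G''}$ lands in the normal cone of a neighbouring vertex. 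Granting adjacency, the degenerate cases are cleared first: $f'\ne f$ and $f''\ne f$ (a cone is not adjacent to itself); $f',f''\notin\mathcal G$ (else $\mathcal G'$ or $\mathcal G''$ has fewer than $n$ elements and cannot generate a MESC); and if $f'=f''$ then $\cone{\mathcal G'}=\cone{\mathcal G''}$ is an $n$‑dimensional cone, hence lies in a unique maximal normal cone and $x'=x''$. So assume $f'\ne f''$, whence $\mathcal G\cap\mathcal G'=\mathcal G\cap\mathcal G''=\mathcal G\setminus\{f\}$, a linearly independent set of $n-1$ vectors.

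Next I would reduce to a line. Each of $x,x',x''$ is an extreme point (its normal cone contains an $n$‑dimensional cone, hence is $n$‑dimensional), so by the preceding proposition $xg=b_g$ for all $g\in\mathcal G$ and $xg\ge b_g$ for all $g\in\mathcal F$, and likewise for the pairs $(x',\mathcal G')$ and $(x'',\mathcal G'')$. In particular $x$, $x'$ and $x''$ all solve the system $yg=b_g$, $g\in\mathcal G\setminus\{f\}$, of $n-1$ independent linear equations, whose solution set is a line $\ell$; the direction of $\ell$ is the orthogonal complement of $\mathcal H:=\mathrm{span}(\mathcal G\setminus\{f\})$, i.e. it is spanned by the normal vector $t$ of $\mathcal H$ appearing in Lemma~\ref{lem-adjacent-cone}. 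Hence $x'=x+s't$ and $x''=x+s''t$ for scalars $s',s''$, and it suffices to prove $s'=s''$.

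Then I would feed in adjacency. Applying Lemma~\ref{lem-adjacent-cone} to the adjacent pairs $\{\cone{\mathcal G},\cone{\mathcal G'}\}$ and $\{\cone{\mathcal G},\cone{\mathcal G''}\}$ (both pairs having common generators $\mathcal G\setminus\{f\}$ spanning $\mathcal H$) gives $(f\cdot t)(f'\cdot t)<0$ and $(f\cdot t)(f''\cdot t)<0$; multiplying, $(f'\cdot t)(f''\cdot t)>0$, so $f'\cdot t$ and $f''\cdot t$ are nonzero of equal sign, and I orient $t$ so that both are negative. Since $f'\in\mathcal G'$ is tight at $x'$, we have $x'f'=b_{f'}$, so
\[
x''f' = (x''-x')\cdot f' + x'f' = (s''-s')(t\cdot f') + b_{f'}.
\]
Because $x''\in\csete$ forces $x''f'\ge b_{f'}$ and $t\cdot f'<0$, this yields $s''\le s'$. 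Symmetrically, using that $f''$ is tight at $x''$ and $x'\in\csete$, one gets $s'\le s''$. Hence $s'=s''$, i.e. $x'=x''$.

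The step I expect to need the most care is the alignment in the second paragraph: one must see that the three vertices are not merely "related" but literally lie on one line, and that this line points along exactly the vector $t$ of the adjacency criterion, so that the algebraic sign condition and the geometric positions live in the same coordinate. Once that is in place the conclusion is a two‑line computation, and the only remaining bookkeeping is the handful of degenerate configurations, all disposed of at the outset.
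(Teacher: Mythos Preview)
Your argument is correct but takes a different route from the paper's. Both proofs begin identically: invoke adjacency and Lemma~\ref{lem-adjacent-cone} twice to obtain $(f'\cdot t)(f''\cdot t)>0$. From there the paper stays entirely in the dual picture: it reapplies Lemma~\ref{lem-adjacent-cone} (now in the converse direction, to the pair $\cone{\mathcal G'},\cone{\mathcal G''}$) to conclude that their relative interiors meet, hence so do the relative interiors of $\ncone{\csete}{x'}$ and $\ncone{\csete}{x''}$; since distinct maximal normal cones in a fan have disjoint interiors, $x'=x''$. Your proof instead passes to the primal picture: you place $x,x',x''$ on the affine line directed by $t$, use tightness of $f'$ at $x'$ and $f''$ at $x''$ together with feasibility of $x',x''\in\csete$, and squeeze $s'=s''$ from the common negative sign of $f'\cdot t$ and $f''\cdot t$. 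The paper's version is shorter because the heavy lifting (the equivalence between the sign condition and overlap of relative interiors) was already done inside Lemma~\ref{lem-adjacent-cone}; your version is more self-contained in that it uses only the forward implication of that lemma and then argues directly with the defining inequalities of $\csete$. Your observation that the line through the three vertices is literally directed by the same $t$ appearing in the adjacency criterion is the key alignment that makes the primal computation clean, and it is worth recording explicitly even if one ultimately prefers the paper's shorter dual argument.
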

\begin{proof}
	By the assumptions, both $\cone{\mathcal G'}$ and $\cone{\mathcal G''}$ are adjacent to $\cone{\mathcal G}$. Let $t$ be the normal vector to the hyperplane $\mathrm{lin}(G\backslash\{ f\})$. By Lemma~\ref{lem-adjacent-cone}, $(f'\cdot t)(f''\cdot t) > 0$, whence the relative interiors of the corresponding cones intersect. Hence, the relative interiors of $\ncone{\mathcal K}{x'}$ and $\ncone{\mathcal K}{x''}$ intersect, which is only possible if the cones are the same.
\end{proof}
\begin{remark}
	It is indeed possible that we have multiple adjacent cones to $\cone{\mathcal G}$ corresponding to the same adjacent extreme point, as they might correspond to different triangulations of $\ncone{\mathcal K}{x'}$. Yet, when a triangulation is fixed, only one adjacent cone of the form $\cone{(\mathcal G\backslash\{ f\})\cup \{f'\}}$ exists per fixed $f$. 
\end{remark}
The above propositions would therefore enable identification of all cones corresponding to a complete normal simplicial fan of a convex polyhedron, and consequently an identification and enumeration of its extreme points. The main steps are given by Algorithm~\ref{alg-extreme-points}. 
\begin{algorithm}
	\caption{Algorithm: find complete simplicial fan and extreme points}\label{alg-extreme-points}
	\begin{algorithmic}[1]
		\Function{SimplicialFan}{$\mathcal F, \mathbf b$} 
		\State \Comment{Inputs: support vector and support function}
		\State {Initialize: find $\mathcal G$, such that $\cone{\mathcal G}$ is MESC}
		\State $V \gets \{ \mathcal G\}, E \gets \emptyset, G \gets (V, E)$
		\State \Comment{Adjacency graph of the members of complete simplicial fan}
		\State $\mathcal E \gets \mathrm{ExtremePoint}(\mathcal G)$
		\State \Comment{Collection of extreme points}
		\Repeat 
			\State {Select $\mathcal G\in V$ such that $\deg(\mathcal G) < n$}
			\State {neighbours $\gets$ Find all neighbour cones corresponding to extreme points}
			\State {neighbours $\gets$ Remove all duplicate neighbours $\mathcal G'$ having the same intersection with $\mathcal G$}
			\State $V \gets V \cup$ neighbours
			\State $E \gets E \cup \{ (\mathcal G, \mathcal G') \}$ for all $\mathcal G'\in$ neighbours
			\State $\mathcal E \gets \mathcal E \cup \{\mathrm{ExtremePoint}(\mathcal G')\}$ for all $\mathcal G'\in$ neighbours
		\Until ($G$ is $n$-regular)
		\EndFunction
	\end{algorithmic}
\end{algorithm}

\section{Credal sets as imprecise probability models}\label{s-csip}
In this section we introduce the basic concepts of imprecise probabilities used in this paper. When possible, we will stick to the standard terminology used in the theory of imprecise probabilities (see \cite{augustin2014introduction, walley:91}). The main goal of this section is to relate the models of imprecise probabilities to the concepts of convex analysis. 

The object of our analysis are \emph{ coherent lower previsions} \cite{miranda2008, TroffaesDeCooman2014}, which represent one of the most general models of imprecise probabilities. They include several special models, such as \emph{coherent lower and upper probabilities} \cite{dempster2008upper, pearl1988probability, weic:01}, 2- and n-\emph{monotonic capacities} \cite{bronevichAugustin2009, montes20182, sundberg1992characterizations}, \emph{belief} and \emph{plausibility functions} \cite{shafer1987belief, smets1993belief}, and others. Mathematically, coherent lower previsions are superlinear functionals that can equivalently be represented as lower envelopes of expectation functionals.

Let $\states$ represent a finite set -- \emph{sample space}, and $\allgambles$ the set of all real-valued maps on $\states$ -- \emph{gambles}. By $\charf{A}$ we will denote the \emph{indicator gamble} of a set $A\subseteq \states$:
\begin{equation}
	\charf{A}(x) = 
	\begin{cases}
		1 & x \in A, \\
		0 & \text{otherwise}. 
	\end{cases} 	
\end{equation}
A \emph{linear prevision} $P$ is an expectation functional with respect to some probability mass vector $p$ on $\states$. It maps a gamble $f$ into a real number $P(f)$:
\begin{equation}
	P(f) = \sum_{x\in\states} p(x)f(x).
\end{equation} 
Equivalently, we can write $P(f) = pf$. 
The set of linear previsions is therefore a subset of $\allgambles$. 

A \emph{lower prevision} is a mapping $\lpr\colon \gambleset\to \RR$. In this paper we will require lower previsions to satisfy the property of \emph{coherence}. We first define coherence for lower previsions with the domain $\allgambles$. A mapping $\lpr \colon \allgambles \to \RR$ is called a \emph{coherent lower prevision} if and only if it satisfies the following axioms (\cite{miranda2008}) for all $f, g\in \gambleset$ and $\lambda \ge 0$: 
\begin{itemize}
	\item[(P1)] $\lpr (f) \ge \inf_{x\in \states} f(x)$ [accepting sure gains];
	\item[(P2)] $\lpr (\lambda f) = \lambda \lpr (f)$ [positive homogeneity];
	\item[(P3)] $\lpr (f + g) \ge \lpr(f) + \lpr(g)$ [superlinearity].
\end{itemize}
A lower prevision with a restricted domain $\gambleset$ is said to be coherent whenever it can be extended to a coherent lower prevision on $\allgambles$. Equivalently, a lower prevision $\lpr$ on $\gambleset$ is coherent if and only if it allows the representation 
\begin{equation}
	\lpr (f) = \min_{P\in\cset} P(f)
\end{equation}
for every $f\in \gambleset$, where $\cset$ is a closed and convex set of linear previsions. The largest such set, denoted $\cset(\lpr)$, is called the \emph{credal set} of $\low P$, defined as
\begin{equation}
	\cset(\lpr) = \{ P \text{ a linear prevision }\colon P(f) \ge \lpr (f) \text{ for every } f\in \gambleset \}.
\end{equation}
Given a coherent lower prevision $\lpr$ on $\gambleset$, it is possible to extend it to the set of all gambles $\allgambles$ in possibly several different ways. However, there is a unique minimal extension, called the \emph{natural extension}:
\begin{equation}
	\low E(f) = \min_{P\in\cset(\lpr)} P(f). 
\end{equation} 
As the natural extension is the \emph{lower envelope} or the \emph{support function} of a credal set, containing expectation functionals, we may call a coherent lower prevision defined on the entire $\allgambles$ a \emph{lower expectation functional}. 
Together with a coherent lower prevision, the notion of \emph{coherent upper prevision} $\upr$ is often introduced. Assuming the domain is a vector space, the conjugacy relation $\lpr(f) = -\upr(-f)$ is a straightforward consequence of coherence. 

\subsection{Lower probabilities and probability intervals}
From the definition of a coherent lower prevision it follows that its domain $\gambleset$ may be a finite subset of $\allgambles$. Indeed, $\lpr (f)$ are often some judgments representing the available information about the expectations of the gambles. According to the philosophy of imprecise probabilities, probability models should not assume more information than is available. Therefore, in general, several probabilistic models fit the given information. The natural extension then allows all compatible models to be considered simultaneously, which is considered the main advantage of imprecise models over classical models. 

Lower previsions whose domain $\gambleset$ consists of indicator functions $\charf{A}$ of subsets $A\subseteq \states$ are called \emph{lower probabilities}. A lower probability is usually denoted by $L\colon \mathcal A\to \RR$, where $\mathcal A\subseteq 2^\states$. The mapping $L$ has the same role as $\lpr$ in the previous section. The credal set of a lower probability $\cset(L)$ is again the set of all linear previsions $\lpr \colon \gambleset \to \RR$ that satisfy $P(A)\ge L(A)$ for each $A\in\mathcal A$. A lower probability is \emph{coherent} if all bounds are reachable: $L(A) = \min_{P\in\mathcal{M}(L)}P(A)$. All lower probabilities in this paper are assumed to be coherent.

In the case where $\mathcal A\neq 2^\states$, the corresponding lower probability is said to be partially specified. An important example of a partially specified lower probability is \emph{probability interval model (PRI)} \cite{decampos:94, weic:01}, where the domain $\mathcal A$ contains only singletons and their complements. A more conventional way to introduce a probability interval is to have a pair of mappings $l, u\colon \states \to \RR$, such that $l\le u$. Here $l(x)$ is interpreted as the lower probability of $\{ x\}$ and $u(x)$ as the upper probability. The notion of coherence applies analogously to this case and is assumed throughout this paper unless explicitly stated otherwise. The conjugacy relation $u(x) = \upr(\charf{\{x \}}) = 1-\lpr(-\charf{\{x \}}) = 1-L(\{x\}^c)$, which implies $L(\{x\}^c) = 1-u(x)$, now allows us to regard probability intervals as lower probabilities with the domain $\mathcal A = \{ \{x \}\colon x\in\states\} \cup \{ \{x \}^c\colon x\in\states\}$. In general, we will denote a PRI model by an ordered pair $(l, u)$, and interpret it as a partially specified lower probability $L$ with the domain $\mathcal A$ if needed. 
Credal sets corresponding to lower probabilities and probability interval models will be denoted by $\cset(L)$ and $\cset(l, u)$, respectively.

\subsection{Finitely generated credal sets as polyhedra} 
A credal set is a closed and convex set of linear previsions. Since every linear prevision can be uniquely represented as a probability mass vector, a credal set can be represented as a convex set of probability mass vectors. The set $\cset$ is therefore the maximal set of $n$-dimensional vectors $p$ satisfying 
\begin{align}
	p f & \ge \lpr(f) & & \text{ for every } f\in \gambleset, \label{eq-cset-constraints-first}\\
	p \charf{\{x \}} & \ge 0 & & \text{ for every } x\in \states\label{eq-cset-constraints-nonnegativity} \text{ and } \\
	p \charf{\states} & = 1. \label{eq-constraint-equality} 	
\end{align}	
In our case, $\gambleset$ is assumed finite, and since additionally the set of all linear previsions is bounded, the corresponding credal set is therefore a convex polytope. A credal set that is a convex polytope is called a \emph{finitely generated credal set}, and a coherent lower prevision defined on a finite set of gambles is a \emph{finitely generated coherent lower prevision}.

According to the above, it would be suitable to extend the domain of $\lpr$ with the gambles of the form $\charf{\{x \}}$ for every $x\in\states$ in order to avoid a separate set of constraints \eqref{eq-cset-constraints-nonnegativity}. Doing so, however, may result in a non-coherent lower prevision, because other constraints may already imply that $\lpr(\charf{\{x \}}) \ge 0$, where the inequality may even be strict. Therefore we adopt the following convention: 
\begin{convention}\label{conv-include-1x}
	The domain $\gambleset$ of all coherent lower previsions used will contain all gambles of the form $\charf{\{x \}}$ for $x\in \states$, together with the value $\lpr(\charf{\{x \}})=0$, unless $\lpr(\charf{\{x \}})\ge 0$ is already implied by other values of $\lpr$ on $\gambleset$. 
\end{convention}
Assuming the above convention, the credal set of a coherent lower prevision $\cset(\lpr)$ is the set of vectors $p$ satisfying constraints \eqref{eq-cset-constraints-first} and \eqref{eq-constraint-equality}. 

Normal cones corresponding to credal sets of coherent lower previsions will be equivalently denoted by $\ncone{\lpr}{P} = \ncone{\cset(\lpr)}{P} = \{ f\colon P(f) = \lpr(f) \} = \cone{\gambleset(P)}$, where $\gambleset(P) = \{ f\in \gambleset\colon P(f) = \lpr(f) \}$. In addition to inequality constraints, credal sets satisfy the equality constraint \eqref{eq-constraint-equality}. This implies that every $\gambleset(P)$ contains the constant gamble $\charf{\states}$. Hence, every normal cone is of the form 
\begin{align}\label{eq-nc-form}
	\ncone{\lpr}{P} & = \cone{\gambleset(P)\backslash \{ \charf{\states}\}} + \mathrm{lin} \{ \charf{\states}\} \\
	& = \left\{ \sum_{f\in \gambleset(P)\backslash \{ \charf{\states}\}} \alpha_f f+ \beta \charf{\states}\colon \alpha_f \ge 0, \beta \in \RR \right\}.
\end{align}
Note that $\mathrm{lin}(\mathcal G)$ stands for the linear space spanned by $\mathcal{G}$, i.e. the set of all linear combinations of its elements.
Technically, a normal cone as above is a union of two symmetrical cones, one with $\beta \ge 0$ and the other with $\beta \le 0$.  The following result is straightforward.
\begin{proposition}[Normal cone additivity]\label{prop-cone-additivity}
	Take arbitrary vectors $g, h\in \ncone{\lpr}{P}$. Then $\lpr(g+h) = \lpr (g) + \lpr(h)$. 
\end{proposition}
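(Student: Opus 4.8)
The plan is to derive the identity from a short squeeze argument that combines three ingredients: the additivity of the linear prevision $P$, the superadditivity axiom (P3) for $\lpr$, and the fact that $P$ lies in the credal set $\cset(\lpr)$ and attains its lower envelope on both $g$ and $h$. First I would recall the characterization of the normal cone recorded just above the statement, namely $\ncone{\lpr}{P} = \ncone{\cset(\lpr)}{P} = \{ f\in\allgambles\colon P(f) = \lpr(f) \}$, where $\lpr$ is read as the lower expectation functional (natural extension) defined on all of $\allgambles$, so that $\lpr(f) = \min_{Q\in\cset(\lpr)} Q(f)$. The hypotheses $g,h\in\ncone{\lpr}{P}$ then say precisely that $P(g) = \lpr(g)$ and $P(h) = \lpr(h)$.

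Next I would chain the relevant (in)equalities. Since $P$ is a linear prevision it is additive, $P(g+h) = P(g) + P(h)$; since $P\in\cset(\lpr)$ we have $P(g+h)\ge \lpr(g+h)$; and by superadditivity (P3), $\lpr(g+h)\ge \lpr(g)+\lpr(h)$. Putting these together with the two equalities coming from the cone membership gives
\[
	\lpr(g)+\lpr(h) \;=\; P(g)+P(h) \;=\; P(g+h) \;\ge\; \lpr(g+h) \;\ge\; \lpr(g)+\lpr(h),
\]
so all inequalities are equalities and in particular $\lpr(g+h) = \lpr(g)+\lpr(h)$, as claimed.

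There is no genuinely hard step here; this is why the statement is labelled straightforward. The only point that needs care — and the one I would make explicit — is the interpretation of $\lpr$ as a lower expectation functional defined on the whole space (as mentioned in the text after the coherence axioms), so that $\lpr$ is meaningful at the generic cone elements $g$, $h$, and $g+h$ (which need not lie in the original finite domain $\gambleset$), and so that the normal cone of $P$ is indeed $\{f\colon P(f)=\lpr(f)\}$ via the envelope identity $\lpr = \min_{Q\in\cset(\lpr)} Q$. Once that is pinned down, the squeeze above completes the proof.
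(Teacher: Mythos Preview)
Your proof is correct. The paper's argument is marginally different in tactic: instead of invoking superadditivity (P3) and squeezing, it uses the fact that a normal cone is closed under addition, so $g+h\in\ncone{\lpr}{P}$ directly, hence $P(g+h)=\lpr(g+h)$; additivity of $P$ then finishes the job in one line. Your squeeze via (P3) achieves the same thing without appealing to the cone's closure under sums, and has the small bonus of making the role of coherence explicit; conversely, the paper's route highlights that the result is really a geometric consequence of the cone structure and needs nothing about $\lpr$ beyond the envelope identity. Either way the content is the same short observation.
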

\begin{proof}
	The fact that $g$ and $h$ both belong to the same normal cone at $P$ implies that $P(g) = \lpr(g)$ and $P(h)=\lpr(h)$. By the closure for sums of the cone, we then also have that $P(g+h) = \lpr(g+h)$ and therefore additivity of $P$ implies $\lpr(g+h) = \lpr (g) + \lpr(h)$. 
\end{proof}
This proposition can be understood as a generalization of the well known property of comonotone additivity for 2-monotone lower probabilities. As we show in Section~\ref{ss-nc2mon}, pairwise comonotone vectors always lie in common normal cones of 2-monotone lower probabilities. 

\section{Normal cones of coherent  lower probabilities}\label{s-nclp}
By a normal cone corresponding to a lower probability or a PRI model, we mean a normal cone corresponding to its credal set, and denote it by $\ncone{L}{P}$ or $\ncone{(l, u)}{P}$ respectively. Such a cone is then a non-negative hull of a set of indicator functions corresponding to a collection of subsets of $\states$.  This set includes $\charf{\states}$, as follows from \eqref{eq-nc-form}. Such a normal cone can thus be represented with the corresponding collection of sets. Let $\setcol{A}$ be arbitrary (finite) collection of sets. Denote the cone generated by their indicator functions by 
\begin{equation}\label{eq-cone-construction}
	\cone{\setcol{A}} := \cone{\{ \charf{A}\colon A\in \setcol{A}\backslash\{\states\}\}} + \mathrm{lin} \{ \charf{\states}\}.
\end{equation}
In particular, we characterize elements of complete normal simplicial fans. The following proposition holds. 
\begin{proposition}\label{prop-lprob-triangulation}
	Let $\cone{\setcol{A}}$ correspond to an element of a complete normal simplicial fan. Then:  
	\begin{enumerate}[(i)]
		\item The vectors $\{ \charf{A}\colon A\in \setcol{A} \}$ are linearly independent.
		\item No equation of the form $\sum_{\setcol{A}\backslash\{\states\}}\alpha_A \charf{A}  + \alpha_\states \charf{\states} = \charf{B}$, where $B\not\in \setcol{A}$, has a solution such that $\alpha_A\ge 0$ for every $A\in \setcol{A}\backslash\{\states\}$. 	
	\end{enumerate}	
\end{proposition}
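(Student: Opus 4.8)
The plan is to read both statements off Proposition~\ref{prop-full-triangulation} once the present situation is matched with the general framework of Section~\ref{s-nc}. First I would recall, from \eqref{eq-cset-constraints-first}--\eqref{eq-constraint-equality} and Convention~\ref{conv-include-1x}, that $\cset(L)$ is a polyhedron of the form \eqref{eq-convex-polyhedron} with generating collection $\gambleset=\{\charf{A}\colon A\in\mathcal A\}\cup\{\charf{\states}\}$, and that the equality constraint \eqref{eq-constraint-equality} forces $P\charf{\states}=1$ for all $P\in\cset(L)$; hence, as in \eqref{eq-nc-form}, every normal cone — and therefore every element of any triangulation of $\fan{\cset(L)}$ — contains the line $\mathrm{lin}\{\charf{\states}\}$. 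The key preparatory step is to observe that such an element $\cone{\setcol{A}}$ becomes a genuinely simplicial cone once we pass to the quotient $\allgambles/\mathrm{lin}\{\charf{\states}\}$ (equivalently, restrict to the affine hull of $\cset(L)$), its extreme rays being the images of the vectors $\charf{A}$, $A\in\setcol{A}\backslash\{\states\}$.

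For (i), I would apply Proposition~\ref{prop-full-triangulation}(i) in the quotient space, obtaining that the images of $\charf{A}$, $A\in\setcol{A}\backslash\{\states\}$, are linearly independent there, and then pull this back to $\allgambles$: a vanishing combination $\sum_{A\in\setcol{A}\backslash\{\states\}}\alpha_A\charf{A}+\alpha_\states\charf{\states}=0$ projects to a vanishing combination of those images, forcing every $\alpha_A=0$, whereupon $\alpha_\states\charf{\states}=0$ forces $\alpha_\states=0$. This yields the linear independence of $\{\charf{A}\colon A\in\setcol{A}\}$ claimed in (i).

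For (ii), I would argue by contradiction. If $\sum_{A\in\setcol{A}\backslash\{\states\}}\alpha_A\charf{A}+\alpha_\states\charf{\states}=\charf{B}$ with $\alpha_A\ge 0$ for every $A\in\setcol{A}\backslash\{\states\}$ and $\alpha_\states\in\RR$ arbitrary, then by the defining description $\cone{\setcol{A}}=\cone{\{\charf{A}\colon A\in\setcol{A}\backslash\{\states\}\}}+\mathrm{lin}\{\charf{\states}\}$ this says precisely that $\charf{B}\in\cone{\setcol{A}}$. Since $\charf{B}\in\gambleset$ but $\charf{B}$ is not among the generators $\{\charf{A}\colon A\in\setcol{A}\}$ of $\cone{\setcol{A}}$, Proposition~\ref{prop-full-triangulation}(ii) rules this out, a contradiction.

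The hard part will be the bookkeeping around $\charf{\states}$ in the first paragraph: because $\cset(L)$ is not full-dimensional in $\allgambles$, every relevant cone carries the line $\mathrm{lin}\{\charf{\states}\}$, and one must state carefully how ``element of a complete normal simplicial fan of $\cset(L)$'' matches the strictly simplicial cones $\cone{\mathcal G}$ of Section~\ref{s-nc} — i.e.\ commit to working in the quotient $\allgambles/\mathrm{lin}\{\charf{\states}\}$, or equivalently to splitting \eqref{eq-constraint-equality} into two inequalities and reading ``simplicial'' as ``simplicial modulo $\mathrm{lin}\{\charf{\states}\}$''. After that, (i) and (ii) are essentially restatements of Proposition~\ref{prop-full-triangulation}. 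Two minor points would still need to be spelled out: that $\alpha_\states$ in (ii) is unconstrained in sign (reflecting that $\charf{\states}$ contributes the linear span $\mathrm{lin}\{\charf{\states}\}$, not a ray, cf.\ \eqref{eq-nc-form}), and — if $\setcol{A}$ is to be read as a subcollection of the domain of a possibly partially specified $L$ — that in (ii) one restricts $B$ to sets with $\charf{B}\in\gambleset$, which is exactly where Proposition~\ref{prop-full-triangulation}(ii) applies.
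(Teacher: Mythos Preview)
Your proposal is correct and matches the paper's approach: the paper's proof is the single sentence ``The proposition is a direct consequence of Proposition~\ref{prop-full-triangulation},'' and your argument does exactly that, supplying the bookkeeping around $\mathrm{lin}\{\charf{\states}\}$ that the paper leaves implicit in its special definition $\cone{\setcol{A}}:=\cone{\{\charf{A}\colon A\in\setcol{A}\backslash\{\states\}\}}+\mathrm{lin}\{\charf{\states}\}$.
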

\begin{proof}
	The proposition is an easy consequence of Proposition~\ref{prop-full-triangulation}: (i) follows directly from  Proposition~\ref{prop-full-triangulation}(i); (ii) states that no $\charf{B}$ can belong to $\cone{\setcol{A}}$ if $B\not \in \mathcal A$, which is required by Proposition~\ref{prop-full-triangulation}(ii) for general polytopes. 
\end{proof}
\begin{corollary}\label{cor-non-disjoint}
	Let $\cone{\setcol{A}}$ be an element of a complete normal simplicial fan corresponding to a credal set of a coherent lower probability $L\colon 2^\states\to \RR$. Then 
	\begin{enumerate}[(i)]
		\item for every pair of sets $A_1, A_2\in \mathcal A, A_1\cap A_2 \neq \emptyset$, i.e. $\mathcal A$ is an intersecting collection;
		\item for every pair of sets $A_1, A_2\in \mathcal A\backslash\{\states\}, A\cup B \neq \states$. 
	\end{enumerate}	 
\end{corollary}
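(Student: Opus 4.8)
The plan is to deduce both claims directly from Proposition~\ref{prop-lprob-triangulation}. For a $\cone{\setcol{A}}$ that is an element of a complete normal simplicial fan, that proposition gives two facts: the indicators $\{\charf{A}\colon A\in\setcol{A}\}$ are linearly independent, and there is no proper subset $B\subseteq\states$ with $B\notin\setcol{A}$ for which $\charf{B}=\sum_{A\in\setcol{A}\setminus\{\states\}}\alpha_A\charf{A}+\alpha_\states\charf{\states}$ with all $\alpha_A\ge0$ (and $\alpha_\states\in\RR$ unrestricted). Since $L$ is defined on the whole of $2^\states$, every indicator $\charf{B}$ lies in the domain, so the second fact is available for every $B$. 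I will also use that $\charf{\emptyset}$ is the zero vector, hence cannot belong to a linearly independent family, so every member of $\setcol{A}$ is non-empty; that $\states\in\setcol{A}$; and the pointwise identities $\charf{A_1}+\charf{A_2}=\charf{A_1\cup A_2}+\charf{A_1\cap A_2}$ and, when $A_1\cap A_2=\emptyset$, $\charf{A_1}+\charf{A_2}=\charf{A_1\cup A_2}$.

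I would first prove (i). Suppose $A_1,A_2\in\setcol{A}$ are disjoint. They must be distinct (equality would force $A_1=A_2=\emptyset$) and non-empty, so $A_1\cup A_2$ strictly contains each and $\charf{A_1}+\charf{A_2}=\charf{A_1\cup A_2}$. If $A_1\cup A_2\in\setcol{A}$ — which in particular covers the case $A_1\cup A_2=\states$, as $\states\in\setcol{A}$ — then $A_1,A_2,A_1\cup A_2$ are three distinct members of $\setcol{A}$ and $\charf{A_1}+\charf{A_2}-\charf{A_1\cup A_2}=0$ is a nontrivial linear relation, contradicting part (i) of Proposition~\ref{prop-lprob-triangulation}. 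Otherwise $A_1\cup A_2\notin\setcol{A}$, and then necessarily $A_1\cup A_2\ne\states$; taking $B=A_1\cup A_2$, $\alpha_{A_1}=\alpha_{A_2}=1$, all other $\alpha_A=0$ and $\alpha_\states=0$ exhibits exactly the representation forbidden by part (ii).

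Next I would prove (ii). Suppose $A_1,A_2\in\setcol{A}\setminus\{\states\}$ with $A_1\cup A_2=\states$; again $A_1\ne A_2$ (else $A_1=\states$). The inclusion-exclusion identity then reads $\charf{A_1\cap A_2}=\charf{A_1}+\charf{A_2}-\charf{\states}$. Put $B=A_1\cap A_2$; then $B\subseteq A_1\subsetneq\states$, and $B\ne A_1$, $B\ne A_2$ (either equality would force the other set to equal $\states$). If $B\in\setcol{A}$, then $A_1,A_2,\states,B$ are four distinct members of $\setcol{A}$ and $\charf{A_1}+\charf{A_2}-\charf{\states}-\charf{B}=0$ contradicts part (i) of Proposition~\ref{prop-lprob-triangulation}. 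If $B\notin\setcol{A}$, then $\alpha_{A_1}=\alpha_{A_2}=1$, all other $\alpha_A=0$ and $\alpha_\states=-1$ yield $\sum_{A\in\setcol{A}\setminus\{\states\}}\alpha_A\charf{A}+\alpha_\states\charf{\states}=\charf{B}$ with all $\alpha_A\ge0$, contradicting part (ii).

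Once Proposition~\ref{prop-lprob-triangulation} is in hand the argument is essentially bookkeeping, and there is no real obstacle. The one point that needs care in each case is the dichotomy ``$A_1\cup A_2$ (resp.\ $A_1\cap A_2$) lies in $\setcol{A}$ or not'': in the first alternative the contradiction comes from linear independence, in the second from the absence of a non-negative representation, and in the first alternative one must verify that the sets appearing in the linear relation are genuinely distinct so the relation is nontrivial. I expect the small degenerate subcases — $A_1=A_2$, an empty union or intersection, or the union/intersection coinciding with $\states$ — to be the easiest thing to overlook, even though each is settled in a line.
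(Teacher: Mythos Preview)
Your argument is correct and follows the same route as the paper: both use the inclusion–exclusion identity and then split according to whether the resulting set $A_1\cup A_2$ (resp.\ $A_1\cap A_2$) lies in $\setcol{A}$, invoking linear independence in one case and Proposition~\ref{prop-lprob-triangulation}(ii) in the other. The one edge case you flagged but did not fully resolve is $A_1\cap A_2=\emptyset$ in part~(ii): the paper handles it by appealing to the already-proved part~(i), whereas you route it through Proposition~\ref{prop-lprob-triangulation}(ii), which does not quite apply since $\charf{\emptyset}=0$ always admits the trivial representation; but then $\charf{A_1}+\charf{A_2}-\charf{\states}=0$ is itself a nontrivial dependence among three distinct members of $\setcol{A}$, so the fix is immediate.
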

\begin{proof}
	To see (i), suppose $A_1, A_2\in \setcol{A}$ exist such that $A_1\cap A_2 = \emptyset$. Then $\charf{A_1} + \charf{A_2} = \charf{A_1\cup A_2}$. Hence, either $A_1\cup A_2\in \setcol{A}$, which makes it linearly dependent or $\charf{A_1\cup A_2}\in \cone{\setcol{A}}$, violating (ii) of Proposition~\ref{prop-lprob-triangulation}. 
	
 To see (ii), suppose $A_1\cup A_2 = \states$. Then, $\charf{A_1}+\charf{A_2} = \charf{A_1\cup A_2} + \charf{A_1\cap A_2} = \charf{\states} + \charf{A_1\cap A_2}$. Hence, either $A_1\cap A_2 =\emptyset$, which is violation of (i) or taking $B = A_1\cap A_2$, $\charf{B}$ is contained in $\cone{\mathcal A}$. Then either, $\mathcal A$ is dependent, which violates Proposition~\ref{prop-lprob-triangulation}(i) or it contains $\charf{B}$, violating Proposition~\ref{prop-lprob-triangulation}(ii).
\end{proof}
Even though Corollary~\ref{cor-non-disjoint} turns out useful to characterize elements of complete simplicial fans, in general, it is insufficient, as the following example shows.
\begin{example}\label{ex-ex1}
	Let $\states = \{ x_1, x_2, x_3, x_4 \}$. Consider the collection of sets $A_1 = \{ x_1, x_2\}, A_2 = \{ x_2, x_3\}, A_3 = \{ x_1, x_3\}, \states$. It is an intersecting collection, yet it does not generate an element of a complete simplicial fan. This is because $\frac{1}{2}(\charf{A_1} + \charf{A_2}+ \charf{A_3})  = \charf{\{x_1, x_2, x_3\}}$, which violates Proposition~\ref{prop-lprob-triangulation} (ii). 
\end{example}
\begin{example}\label{ex-ex2}
	Let $\states = \{ x_1, x_2, x_3 \}$. Every MESC is a cone of the form $\cone{\mathcal A}$, where $\mathcal A$ is a collection of exactly 3 subsets of $\states$, including $\states$. Suppose $\{ x_1,  x_2 \}\in \mathcal A$. According to Corollary~\ref{cor-non-disjoint}, the only possibilities for the second set (note that $\states$ is always a member) in $\mathcal A$ are $\{ x_1\}$ and $\{ x_2\}$. Indeed, both these sets correspond to adjacent MESCs. Similarly, in addition to a singleton set $\{ x_1\}$, the only possible sets are $\{ x_1, x_2\}$ and $\{ x_1, x_3\}$, that both correspond to two adjacent MESCs corresponding to adjacent extreme points. 
\end{example}

\subsection{Normal cones of 2-monotone lower probabilities}\label{ss-nc2mon}
A lower probability $L\colon 2^\states\to \RR$ is said to be \emph{2-monotone (convex, supermodular)} if for every pair of sets $A, B\in 2^\states$, 
\begin{equation}\label{eq-2-monotone}
	L (A\cup B) + L (A\cap B) \ge L(A) + L(B).
\end{equation} 
A collection of subsets $\mathcal A$ is a \emph{chain} if for any pair of sets $A, B\in \mathcal A$, at least one of the relations $A\subseteq B$ or $B\subseteq A$ holds. A maximal chain is then the one maximal with respect to set inclusion. In our finite settings, a maximal chain $\mathcal A$ is of the form $\mathcal A = \{ A_0, A_1,  \ldots A_n \}$ where $A_0 = \emptyset, A_n = \states$ and $|A_{i+1}\backslash A_i| = 1$. 

The following proposition gives a previously known result  (see, e.g., \cite{bronevichAugustin2009, chateauneuf1989some, miranda2003extreme}). 
\begin{proposition}\label{prop-2-monotone-extremal}
	Let $L$ be a 2-monotone lower probability on a finite space $\states$ and $\mathcal A$ a maximal chain. Then there exists a linear prevision $P_\mathcal A\in \cset(L)$ such that $P(A) = L(A)$ for every $A\in \mathcal A$. 
	
	Moreover, every extreme point $P\in \cset(L)$ is of the form $P_\mathcal A$ for some maximal chain $\mathcal A$. 
\end{proposition}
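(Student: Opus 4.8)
The plan is to construct the linear prevision $P_{\mathcal A}$ explicitly from the chain, verify that it lies in $\cset(L)$ using 2-monotonicity, and then argue the converse by a counting/dimension argument via normal cones.

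For the first part, given a maximal chain $\mathcal A = \{A_0, A_1, \ldots, A_n\}$ with $A_0 = \emptyset$, $A_n = \states$, and $A_i = A_{i-1} \cup \{x_i\}$ for distinct $x_1, \ldots, x_n$ enumerating $\states$, I would define the probability mass vector $p_{\mathcal A}$ by setting $p_{\mathcal A}(x_i) = L(A_i) - L(A_{i-1})$. One checks immediately that $\sum_i p_{\mathcal A}(x_i) = L(\states) - L(\emptyset) = 1$ (telescoping, using $L(\states)=1$, $L(\emptyset)=0$), and that $p_{\mathcal A}(x_i) \geq 0$ follows from monotonicity of $L$ (which is a consequence of 2-monotonicity together with coherence). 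By construction $P_{\mathcal A}(A_k) = \sum_{i=1}^{k} p_{\mathcal A}(x_i) = L(A_k)$ for every $k$, so $P_{\mathcal A}$ attains the bound on every set in the chain. The substantive step is showing $P_{\mathcal A}(B) \geq L(B)$ for an \emph{arbitrary} $B \subseteq \states$. Here I would induct on $|B|$ (or on the number of ``gaps'' of $B$ relative to the chain ordering): write $B$ and consider the largest index $j$ with $x_j \in B$; apply \eqref{eq-2-monotone} with the pair $B$ and $A_{j-1}$, noting $B \cup A_{j-1} = A_{j-1} \cup \{x_j, \ldots\}$ has one fewer element of $B$ outside the chain prefix, and $B \cap A_{j-1} = B \setminus \{x_j\}$, then invoke the inductive hypothesis on both $B \cup A_{j-1}$ and $B \cap A_{j-1}$ together with $P_{\mathcal A}(A_{j-1}) = L(A_{j-1})$; the additivity of $P_{\mathcal A}$ closes the inequality. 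This inductive unwinding of 2-monotonicity along the chain is the technical heart of the forward direction.

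For the converse, let $P$ be an extreme point of $\cset(L)$. Since $L$ is defined on all of $2^\states$, Convention~\ref{conv-include-1x} is vacuous and the credal set is cut out by the constraints $P(A) \geq L(A)$ for $A \in 2^\states$ together with $P(\states) = 1$. By Proposition~\ref{prop-cone-positive-hull}, the normal cone $\ncone{L}{P} = \cone{\gambleset(P)}$ has full dimension $n$, where $\gambleset(P) = \{\charf{A} : P(A) = L(A)\}$; hence the collection $\mathcal A_P = \{A : P(A) = L(A)\}$ of tight sets must contain a linearly independent family of $n$ indicator gambles, and it always contains $\charf{\states}$ (and $\charf{\emptyset}$ trivially, since $P(\emptyset)=0=L(\emptyset)$). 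The plan is to show $\mathcal A_P$ contains a maximal chain. The key lemma is that the tight sets are closed under union and intersection: if $P(A) = L(A)$ and $P(B) = L(B)$, then by additivity of $P$ and 2-monotonicity, $L(A) + L(B) = P(A) + P(B) = P(A\cup B) + P(A \cap B) \geq L(A\cup B) + L(A\cap B) \geq L(A) + L(B)$, forcing equality throughout, so $A \cup B, A \cap B \in \mathcal A_P$. Thus $\mathcal A_P$ is a lattice (sublattice of $2^\states$ containing $\emptyset$ and $\states$). A distributive lattice of subsets whose indicator gambles span $\allgambles = \RR^n$ must, by a standard argument, contain a maximal chain of length $n+1$: indeed the span condition forces the lattice to ``separate points'' in the sense that consecutive levels differ by single elements, and one extracts $A_0 \subsetneq A_1 \subsetneq \cdots \subsetneq A_n$ inside $\mathcal A_P$ with $|A_i \setminus A_{i-1}| = 1$. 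Then $P$ agrees with $L$ on this chain $\mathcal A$, and since both $P$ and $P_{\mathcal A}$ are linear previsions pinned down uniquely by their values on a maximal chain (the system $Q(A_i) = c_i$, $i=1,\ldots,n$, has full rank), we conclude $P = P_{\mathcal A}$.

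\textbf{Main obstacle.} The delicate point is the extraction of a maximal chain from the lattice $\mathcal A_P$ using only the span/full-dimensionality condition: one must rule out ``jumps'' of size $\geq 2$ between adjacent lattice elements by showing such a jump would leave the indicator gambles deficient in rank, or alternatively argue via the correspondence (extreme points $\leftrightarrow$ MESCs from Proposition~\ref{prop-full-triangulation}) that an $n$-dimensional elementary simplicial cone generated by indicators of a lattice is necessarily generated by the steps of a maximal chain. I would likely phrase this as: a chain in $\mathcal A_P$ of maximal length has length $n+1$ precisely because $n$ linearly independent indicators, each differing from the next by a single point when ordered by inclusion, are exactly what a basis extracted from a lattice looks like — and if the longest chain were shorter, the lattice would lie in a proper subspace. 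Pinning down this last implication cleanly, rather than hand-waving it, is where the real work lies; everything else is bookkeeping with 2-monotonicity and additivity of $P$.
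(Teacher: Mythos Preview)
The paper does not actually prove this proposition: it is stated as a ``previously known result'' with citations to \cite{bronevichAugustin2009, chateauneuf1989some, miranda2003extreme}, and no proof is given. Your proposal, by contrast, sketches the classical argument, and it is essentially correct.

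A couple of small comments. In the forward direction your induction is right in spirit but slightly mis-described: with $j$ the largest index such that $x_j\in B$ you have $B\subseteq A_j$, so $B\cup A_{j-1}=A_j$ exactly, and you do not need the inductive hypothesis on $B\cup A_{j-1}$ at all---you use $P_{\mathcal A}(A_j)=L(A_j)$ and $P_{\mathcal A}(A_{j-1})=L(A_{j-1})$ directly, together with the hypothesis on $B\cap A_{j-1}=B\setminus\{x_j\}$, which has strictly smaller cardinality. In the converse direction, the ``main obstacle'' you flag dissolves cleanly: once $\mathcal A_P$ is a sublattice of $2^{\states}$ whose indicators span $\RR^n$, the lattice separates every pair of points; then any chain in $\mathcal A_P$ that is maximal \emph{within the lattice} must already be maximal in $2^{\states}$, since a jump $B_{i-1}\subsetneq B_i$ of size $\ge 2$ would be refined by $B_{i-1}\cup(C\cap B_i)$ for any $C\in\mathcal A_P$ separating two elements of $B_i\setminus B_{i-1}$. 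So no hand-waving is needed there.
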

The important claim of the above proposition is that the linear prevision with this property belongs to $\cset(L)$. 
This characterization of the extreme points of credal sets of 2-monotone lower probabilities can now be used to give a characterization of the normal cones of those credal sets. 
Two vectors $g, h\in \allgambles$ are said to be \emph{comonotone} if $g(x) > g(y)$ implies that $f(x)\ge f(y)$ for every pair $x, y\in \states$. A set of vectors $\mathcal H$ is said to be \emph{comonotone} if all pairs of its elements are. 
\begin{proposition}[\cite{den:97}]\label{prop-comonotonicity-equiv}
	Let $\mathcal H$ be a set of vectors. The following conditions are equivalent:
	\begin{enumerate}[(i)]
		\item $\mathcal H$ is comonotone; 
		\item the level set $\mathcal L = \{ \{ x\colon f(x) \le \alpha \} \colon f\in \mathcal H, \alpha\in \RR \}$ forms a chain;
		\item an ordering of elements $\states$ exists, denoted by $x_1, \ldots, x_n$, such that $f(x_i) \le f(x_{i+1})$ for every $f\in \mathcal H$ and $1\le i \le n-1$. 
	\end{enumerate}
\end{proposition}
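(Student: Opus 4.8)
The plan is to prove the three equivalences as a cycle of implications, \emph{(iii)}$\Rightarrow$\emph{(i)}$\Rightarrow$\emph{(ii)}$\Rightarrow$\emph{(iii)}. This is a classical characterisation of comonotone families, so the work is mainly in casting the standard argument in the present notation; the one structural ingredient we exploit beyond the definitions is that $\states$ is finite, so that $\mathcal{L}$ is a \emph{finite} chain of subsets of $\states$ and, whenever $\mathcal{H}\ne\emptyset$, contains both $\emptyset$ and $\states$ (take $\alpha$ strictly below $\min_x f(x)$, resp. at or above $\max_x f(x)$, for any fixed $f\in\mathcal{H}$). The case $\mathcal{H}=\emptyset$ is vacuous and can be dispatched at the outset.

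For \emph{(iii)}$\Rightarrow$\emph{(i)}: if $x_1,\dots,x_n$ is an enumeration along which every $f\in\mathcal{H}$ is nondecreasing, then $f(x_i)\le f(x_j)$ whenever $i\le j$; consequently, if $f(x_i)>f(x_j)$ for some $f\in\mathcal{H}$ we must have $i>j$, and then $g(x_i)\ge g(x_j)$ for every other $g\in\mathcal{H}$, which is precisely comonotonicity of the pair $\{f,g\}$. For \emph{(i)}$\Rightarrow$\emph{(ii)}: if two level sets $A=\{x:f(x)\le\alpha\}$ and $B=\{x:g(x)\le\beta\}$ were incomparable, pick $x\in A\setminus B$ and $y\in B\setminus A$; then $f(x)\le\alpha<f(y)$ and $g(y)\le\beta<g(x)$, so $f(y)>f(x)$ while $g(y)<g(x)$, contradicting comonotonicity of $\{f,g\}$. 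Hence every two level sets are nested and $\mathcal{L}$ is a chain. Both directions are immediate once written out.

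The substantive step is \emph{(ii)}$\Rightarrow$\emph{(iii)}. Write the finite chain as $\mathcal{L}=\{S_0\subsetneq S_1\subsetneq\cdots\subsetneq S_m\}$ with $S_0=\emptyset$ and $S_m=\states$, and for $x\in\states$ let $k(x)=\min\{k:x\in S_k\}$, so the layers $S_k\setminus S_{k-1}$, $k=1,\dots,m$, partition $\states$ and $x\in S_k$ iff $k\ge k(x)$. I would then take for the required enumeration any listing of $\states$ in which all elements of layer $S_1\setminus S_0$ come first, then all of $S_2\setminus S_1$, and so on (the order within a layer being irrelevant). The verification rests on the single implication: \emph{if $f(x)>f(y)$ for some $f\in\mathcal{H}$, then $k(y)<k(x)$}; indeed the level set $\{z:f(z)\le f(y)\}$ lies in $\mathcal{L}$, hence equals some $S_\ell$, and it contains $y$ but not $x$, forcing $k(y)\le\ell<k(x)$. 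Contrapositively $k(x)\le k(y)$ implies $f(x)\le f(y)$, and since ``$x$ listed before $y$'' entails $k(x)\le k(y)$, the enumeration is nondecreasing for every $f\in\mathcal{H}$. I expect the layer bookkeeping in this last step — correctly relating membership in $S_\ell$ to the indices $k(x)$ and $k(y)$, and noting that it even makes each $f$ constant on a layer so that the intra-layer order is harmless — to be the only point needing care; everything else follows directly from the definitions.
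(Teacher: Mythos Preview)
Your proof is correct. Note, however, that the paper does not supply its own proof of this proposition: it is stated with a citation to \cite{den:97} and used as a known result, so there is no in-paper argument to compare against. Your cycle \emph{(iii)}$\Rightarrow$\emph{(i)}$\Rightarrow$\emph{(ii)}$\Rightarrow$\emph{(iii)} is the standard route; the only point worth a second look is the claim that the finite chain $\mathcal L$ contains $\emptyset$ and $\states$, which you justify correctly by choosing $\alpha$ below $\min f$ and at $\max f$ respectively, and which is needed so that the layers $S_k\setminus S_{k-1}$ genuinely partition $\states$. The key implication in \emph{(ii)}$\Rightarrow$\emph{(iii)}, that $f(x)>f(y)$ forces $k(y)<k(x)$ via the level set $\{z:f(z)\le f(y)\}\in\mathcal L$, is clean and yields, as you observe, that each $f$ is constant on a layer, so the within-layer order is immaterial.
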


\begin{lemma}\label{lemma-comonotone-cone}
	A set of vectors $\mathcal H$ is comonotone if and only if $\cone{\mathcal H}$ is comonotone. 
\end{lemma}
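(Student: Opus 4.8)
The statement to prove is Lemma~\ref{lemma-comonotone-cone}: a set of vectors $\mathcal H$ is comonotone if and only if $\cone{\mathcal H}$ is comonotone. One direction is trivial: since $\mathcal H\subseteq \cone{\mathcal H}$, if the cone is comonotone then so is the subset $\mathcal H$. So the entire content is the forward direction, and the plan is to use the characterization of comonotonicity via a common ordering of $\states$ given in Proposition~\ref{prop-comonotonicity-equiv}(iii).

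First I would invoke Proposition~\ref{prop-comonotonicity-equiv}: since $\mathcal H$ is comonotone, there is an enumeration $x_1, \ldots, x_n$ of $\states$ such that $f(x_i)\le f(x_{i+1})$ for every $f\in\mathcal H$ and every $i$. The key observation is that this monotonicity condition — for a \emph{fixed} enumeration — is preserved under non-negative linear combinations: if $g = \sum_{f\in\mathcal H}\alpha_f f$ with all $\alpha_f\ge 0$, then $g(x_i) = \sum_f \alpha_f f(x_i) \le \sum_f \alpha_f f(x_{i+1}) = g(x_{i+1})$, because each summand respects the inequality and $\alpha_f\ge 0$. Hence every element of $\cone{\mathcal H}$ is non-decreasing along the same enumeration $x_1,\ldots,x_n$.

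Then I would conclude: the set $\cone{\mathcal H}$ consists of vectors all of which are non-decreasing with respect to the single common enumeration $x_1,\ldots,x_n$, so by the equivalence (iii) $\Leftrightarrow$ (i) of Proposition~\ref{prop-comonotonicity-equiv}, $\cone{\mathcal H}$ is comonotone. Strictly speaking, Proposition~\ref{prop-comonotonicity-equiv}(iii) is stated with a weak inequality $f(x_i)\le f(x_{i+1})$, which is exactly what a common enumeration gives; so no strict-inequality subtlety arises, and the argument closes cleanly.

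I do not expect any real obstacle here — the lemma is essentially a packaging of Proposition~\ref{prop-comonotonicity-equiv}. The only point requiring a moment's care is to make explicit that the ordering obtained for $\mathcal H$ works \emph{simultaneously} for all elements of the cone, i.e. that one does not need to re-derive a new enumeration for each $g\in\cone{\mathcal H}$; this is immediate from the displayed computation above since the enumeration is fixed once and for all by $\mathcal H$. One could alternatively phrase the proof through condition (ii) (level sets forming a chain), but the enumeration formulation in (iii) makes the "preserved under non-negative combinations" step the most transparent, so that is the route I would take.
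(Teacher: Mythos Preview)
Your proof is correct. Both you and the paper reduce the lemma to Proposition~\ref{prop-comonotonicity-equiv}, but you use characterization~(iii) (a common enumeration of $\states$), whereas the paper uses characterization~(ii) (the collection of level sets forms a chain). The paper argues that every $f\in\mathcal H$ can be written as a non-negative combination of indicator functions $\charf{L}$ with $L$ in the common level-set chain $\mathcal L$, and that non-negative combinations of such vectors again have all their level sets in $\mathcal L$; hence the cone is comonotone. Your route is more direct and arguably cleaner: monotonicity along a fixed enumeration is manifestly preserved under non-negative linear combinations, so the verification is a one-line computation. The paper's route, on the other hand, sets up the level-set language that is immediately reused in Corollary~\ref{cor-comonotone-chain} and in the subsequent analysis of chains, so it integrates slightly better with what follows. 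Either argument is entirely adequate; you even anticipated the alternative via~(ii) in your closing remark.
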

\begin{proof}
	The 'if' part being trivial, we prove the 'only if' part. Thus, assume $\mathcal H$ being comonotone. By Proposition~\ref{prop-comonotonicity-equiv}, the level set $\mathcal L$ then forms a chain. As by the assumption, $\states$ is finite, $\mathcal L$ must be a finite chain. Moreover, every $f\in \mathcal H$ is of the form $f = \sum_{L\in \mathcal L}\alpha_L \charf{L}$ for some non-negative coefficients $\alpha_L$.  Clearly, every positive linear combination of positive linear combinations of the indicator sets in $\mathcal L$ is again one; and its level sets belong to the same collection $\mathcal L$, which is therefore the collection of the level sets of all elements of the cone $\cone{\mathcal A}$. Now, since $\mathcal L$ is a chain, the cone must therefore be a comonotone set. 
\end{proof}

\begin{corollary}\label{cor-comonotone-chain}
	Let $\mathcal A$ be a collection of sets and $N_\mathcal A = \cone{\mathcal A}$. Then $N_\mathcal A$ is comonotone if and only if $\mathcal A$ forms a chain. 
\end{corollary}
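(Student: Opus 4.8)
The plan is to reduce Corollary~\ref{cor-comonotone-chain} to Lemma~\ref{lemma-comonotone-cone} together with the equivalence of comonotonicity and the chain condition on level sets from Proposition~\ref{prop-comonotonicity-equiv}. The cone $N_{\mathcal A} = \cone{\mathcal A}$ is, by the notational convention introduced just before Proposition~\ref{prop-lprob-triangulation}, the cone generated by the set of vectors $\mathcal H := \{ \charf{A} \colon A \in \mathcal A \}$ (the summand $\mathrm{lin}\{\charf{\states}\}$ only adds the constant gamble $\charf{\states}$, which is comonotone with everything and does not affect the argument, since $\charf{\states}$ is the level set $\{x \colon \charf{\states}(x) \le 1\}$ and is already the top element of any chain of subsets of $\states$). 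So it suffices to prove: $\mathcal H$ is comonotone $\iff$ $\mathcal A$ is a chain.

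For the forward direction, I would invoke Lemma~\ref{lemma-comonotone-cone}: if $\mathcal A$ is a chain, the indicator gambles $\charf{A}$, $A \in \mathcal A$, are pairwise comonotone, because for $A \subseteq B$ the gamble $\charf{A}$ is nondecreasing wherever $\charf{B}$ is (indeed if $\charf{A}(x) > \charf{A}(y)$ then $x \in A \subseteq B$, so $\charf{B}(x) = 1 \ge \charf{B}(y)$, and symmetrically); hence $\mathcal H$ is comonotone, and then $\cone{\mathcal H} = N_{\mathcal A}$ is comonotone by Lemma~\ref{lemma-comonotone-cone}. For the converse, suppose $\mathcal A$ is not a chain, so there are $A, B \in \mathcal A$ with $A \not\subseteq B$ and $B \not\subseteq A$; pick $x \in A \setminus B$ and $y \in B \setminus A$. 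Then $\charf{A}(x) = 1 > 0 = \charf{A}(y)$ but $\charf{B}(x) = 0 < 1 = \charf{B}(y)$, which directly violates the definition of comonotonicity for the pair $\charf{A}, \charf{B}$; hence $\mathcal H$ is not comonotone, and then neither is $\cone{\mathcal H} \supseteq \mathcal H$, i.e. $N_{\mathcal A}$ is not comonotone.

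Alternatively, and perhaps more cleanly, one can argue entirely through Proposition~\ref{prop-comonotonicity-equiv}(ii): the level sets of the indicator gambles $\charf{A}$, $A \in \mathcal A$, are exactly the sets $\states$ and $A^c$ for $A \in \mathcal A$ (taking $\{x \colon \charf{A}(x) \le \alpha\}$, which is $\emptyset$ for $\alpha < 0$, $A^c$ for $0 \le \alpha < 1$, and $\states$ for $\alpha \ge 1$). The family $\{ A^c \colon A \in \mathcal A \} \cup \{\states, \emptyset\}$ is a chain if and only if $\{ A \colon A \in \mathcal A\}$ is a chain, since complementation is an order-reversing bijection on $2^\states$. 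By Proposition~\ref{prop-comonotonicity-equiv}, the level set chain condition is equivalent to comonotonicity of $\mathcal H$, which by Lemma~\ref{lemma-comonotone-cone} is equivalent to comonotonicity of $\cone{\mathcal H} = N_{\mathcal A}$. Chaining these equivalences gives the claim.

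I do not anticipate a genuine obstacle here, as the statement is essentially a repackaging of the two preceding results. The only point requiring a little care is the bookkeeping around the extra summand $\mathrm{lin}\{\charf{\states}\}$ in the definition of $\cone{\mathcal A}$ versus the cone $\cone{\mathcal H}$ literally generated by indicator gambles — one should note that adding $\charf{\states}$ (and its span) neither creates nor destroys comonotonicity, since $\charf{\states}$ is constant. After that remark, the proof is just the two implications above, each a one-line verification of the definition of comonotonicity on a pair of indicator gambles.
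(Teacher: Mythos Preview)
Your proposal is correct and follows essentially the same approach as the paper: the paper's proof is a one-liner observing that the indicator family $\{\charf{A}:A\in\mathcal A\}$ is comonotone exactly when $\mathcal A$ is a chain, and then applying Lemma~\ref{lemma-comonotone-cone}. You unpack both of these steps in detail (and additionally note the harmless $\mathrm{lin}\{\charf{\states}\}$ summand, which the paper tacitly ignores), but the skeleton is identical.
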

\begin{proof}
	Clearly, $\mathcal A$ is a comonotone set exactly if it forms a chain, which by Lemma~\ref{lemma-comonotone-cone}, is exactly if the cone it generates is comonotone. 
\end{proof}
\begin{definition}
	A set of vectors $\mathcal H$ will be called \emph{maximal comonotone} if its level sets $\mathcal L$ form a maximal chain. 
\end{definition}
The following corollary is obvious. 
\begin{corollary}
	Let $\mathcal H$ be a maximal comonotone set. Then $\cone{\mathcal H}$ is a maximal comonotone set. 
\end{corollary}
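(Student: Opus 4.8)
The plan is to reduce everything to Lemma~\ref{lemma-comonotone-cone} together with the characterization of comonotonicity via chains of level sets in Proposition~\ref{prop-comonotonicity-equiv}, and then to exploit the word \emph{maximal}. Write $\mathcal L$ for the collection of level sets $\{\{x\colon f(x)\le \alpha\}\colon f\in\mathcal H,\ \alpha\in\RR\}$ of $\mathcal H$, and $\mathcal L'$ for the analogous collection of level sets of $\cone{\mathcal H}$. Since $\mathcal H\subseteq\cone{\mathcal H}$, every level set arising from an element of $\mathcal H$ is also a level set arising from an element of $\cone{\mathcal H}$, so $\mathcal L\subseteq\mathcal L'$.

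Next I would invoke Lemma~\ref{lemma-comonotone-cone}: a maximal comonotone set $\mathcal H$ is in particular comonotone, hence $\cone{\mathcal H}$ is comonotone, and then Proposition~\ref{prop-comonotonicity-equiv}(ii) tells us that $\mathcal L'$ is a chain with respect to set inclusion.

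Finally I would use maximality. By hypothesis $\mathcal L$ is a maximal chain of subsets of $\states$; since $\mathcal L\subseteq\mathcal L'$ and $\mathcal L'$ is also a chain, maximality forces $\mathcal L=\mathcal L'$. Hence the level sets of $\cone{\mathcal H}$ form a maximal chain, which is precisely the statement that $\cone{\mathcal H}$ is maximal comonotone. There is essentially no serious obstacle here; the only point worth a sentence is that passing from $\mathcal H$ to its conical hull cannot create new level sets, since any such set would strictly enlarge the chain $\mathcal L$ and contradict its maximality — and this is exactly what the inclusion $\mathcal L\subseteq\mathcal L'$ together with maximality rules out.
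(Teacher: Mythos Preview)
Your argument is correct and is essentially the natural way to fill in what the paper leaves implicit: the paper states the corollary as ``obvious'' without proof, relying on the fact (established inside the proof of Lemma~\ref{lemma-comonotone-cone}) that the level sets of $\cone{\mathcal H}$ coincide with those of $\mathcal H$. Your route via the inclusion $\mathcal L\subseteq\mathcal L'$, the chain property of $\mathcal L'$, and maximality of $\mathcal L$ reaches the same conclusion cleanly without reopening that lemma's proof.
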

\begin{theorem}\label{thm-2-monotone-cones}
	Let $\states$ be a finite set and let $\mathbb{A} = \{ \mathcal A\subset 2^\states \colon \mathcal A \text{ forms a chain} \}$. Denote with $\fan{\mathbb{A}} = \{ \cone{\mathcal A} \colon \mathcal A\in \mathbb{A} \}$ the collection of cones generated by the chains. The following propositions hold:
	\begin{enumerate}[(i)]
		\item $\fan{\mathbb{A}}$ is a complete fan.
		\item Let $L$ be a 2-monotone lower probability on $2^\states$ and $\mathcal A\subseteq 2^\states$ a maximal chain. Every $\cone{\setcol{A}}\in \fan{\mathbb{A}}$ is contained in a single normal cone $\ncone{L}{P}$.
		\item Every $\cone{\setcol{A}}\in \fan{\mathbb{A}}$ is a simplicial cone.
		\item $\fan{\mathbb{A}}$ is a complete normal simplicial fan. 
		\item Let $\mathcal A = \{ A_1, \ldots, A_n\}$ be a maximal chain. Then MESC $\cone{\mathcal A}$ is adjacent to exactly $n$ MESCs $\cone{\mathcal A_i}$ in $\mathbb{A}$ generated as follows. Take some $i\in \{ 1, \ldots, n\}$. Then form $\mathcal A_i$ by removing $A_i$ and replacing it with $A'_i = A_{i-1}\cup (A_{i+1}\backslash A_i)$. (We set $A_0 = \emptyset$.) 
		\item The graph with vertices $\mathbb{A}$ and edges corresponding to adjacency is $n$-regular and connected.
	\end{enumerate}
\end{theorem}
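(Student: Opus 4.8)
The plan is to establish the seven claims more or less in the stated order, since each builds on the previous ones, and to lean heavily on the machinery already developed: Proposition~\ref{prop-2-monotone-extremal} (maximal chains give extreme points), Corollary~\ref{cor-comonotone-chain} (a cone generated by a collection of sets is comonotone iff the collection is a chain), and the general results on complete normal simplicial fans from Section~\ref{s-nc}.

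For (i), I would first check that $\fan{\mathbb{A}}$ is a fan: any face of $\cone{\setcol A}$ for a chain $\setcol A$ is $\cone{\setcol A'}$ for a subchain $\setcol A'$, hence in $\fan{\mathbb A}$; and the intersection of $\cone{\setcol A}$ and $\cone{\setcol B}$ for two chains is $\cone{\setcol A \cap \setcol B}$ — here one uses that a point in both cones is comonotone with respect to both chains and so its level sets lie in $\setcol A \cap \setcol B$, which is again a chain. Completeness follows because every gamble $f$ is comonotone with \emph{some} maximal chain $\setcol A$ (order $\states$ so that $f$ is nondecreasing) and then $f = \sum_i \alpha_i \charf{A_i}$ with $\alpha_i \ge 0$ for $i < n$ and $\alpha_n \in \RR$, i.e.\ $f \in \cone{\setcol A}$ in the sense of \eqref{eq-nc-form}. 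For (iii), a maximal chain $\setcol A = \{A_0 = \emptyset, A_1, \dots, A_n = \states\}$ has the property that $\{\charf{A_1}, \dots, \charf{A_n}\}$ is linearly independent (each step adds one new coordinate), so $\cone{\setcol A}$ is simplicial; a non-maximal chain generates a lower-dimensional simplicial cone, a face of a maximal one.

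For (ii), given a maximal chain $\setcol A$, Proposition~\ref{prop-2-monotone-extremal} supplies $P_{\setcol A} \in \cset(L)$ with $P_{\setcol A}(A) = L(A)$ for all $A \in \setcol A$; hence $\charf{A} \in \ncone{L}{P_{\setcol A}}$ for each $A \in \setcol A$, and since the normal cone is closed under the operations in \eqref{eq-nc-form}, $\cone{\setcol A} \subseteq \ncone{L}{P_{\setcol A}}$. A general chain cone is a face of such a cone, hence also inside a single normal cone. Then (iv) is immediate: $\fan{\mathbb A}$ is a fan (i), it is complete (i), all its cones are simplicial (iii), and each sits inside a normal cone of $\cset(L)$ (ii) — so it refines, hence equals up to triangulation, the normal fan, i.e.\ it is a complete normal simplicial fan of $\cset(L)$; and one should note (using Proposition~\ref{prop-lprob-triangulation} or directly) that no $\charf{B}$ lies in the relative interior of a maximal chain cone. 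Claim (v) then counts: the extreme points correspond to the $n$-dimensional cones of the complete normal simplicial fan (Proposition~\ref{prop-full-triangulation}(iii) and Corollary~\ref{cor-adjacent-extreme-points}), these are the maximal-chain cones, and there are exactly $n!$ maximal chains, so at most $n!$ extreme points.

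The substantive combinatorial work is in (vi), and this is the step I expect to be the main obstacle. Given a maximal chain $\setcol A = \{A_1 \subsetneq \cdots \subsetneq A_n = \states\}$ (dropping $A_0 = \emptyset$ from the generating set, as it contributes nothing), I would apply Corollary~\ref{cor-adjacent-mesc}: an adjacent MESC has generators $\mathcal A_i = (\setcol A \setminus \{A_i\}) \cup \{A'_i\}$ for some single $i$ and some set $A'_i \notin \setcol A$. The hyperplane $\mathcal H$ spanned by $\{\charf{A_j} : j \ne i\}$ consists of gambles constant on $A_i \setminus A_{i-1}$ with the "right" comparison to the neighbouring level jumps removed; computing its normal vector $t$ and then invoking Lemma~\ref{lem-adjacent-cone} (the sign condition $(\charf{A_i}\cdot t)(\charf{A'_i}\cdot t) < 0$) should force $A'_i$ to be exactly $A_{i-1} \cup (A_{i+1}\setminus A_i)$ — the unique way to "swap" which of the two singletons $A_i \setminus A_{i-1}$, $A_{i+1}\setminus A_i$ is added first while keeping a chain among the remaining sets and preserving the MESC property (no other $\charf{B}$, $B \notin \mathcal A_i$, in the cone). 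One must also verify $\mathcal A_i$ really is a chain when $A_{i-1} \cup (A_{i+1}\setminus A_i) \subseteq A_{i+1}$ and contains $A_{i-1}$ — which it does by construction — so $\cone{\mathcal A_i} \in \fan{\mathbb A}$, and check the two boundary cases $i=1$ (with $A_0 = \emptyset$, so $A'_1 = A_2 \setminus A_1$, a singleton) and $i = n$ ($A'_n = A_{n-1} \cup \emptyset = A_{n-1}$, which however already lies in $\setcol A$ — so here one should be careful: the replacement at $i=n$ must instead swap the last two jumps, giving $A'_n = A_{n-2} \cup (\states \setminus A_{n-1})$, consistent with the formula once one reads $A_{n+1} = \states$ appropriately; I would state the indexing convention explicitly to make the count come out to exactly $n$). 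Finally (vii) follows from (vi) together with the general Proposition on $n$-regularity and connectedness of the adjacency graph of a complete normal simplicial fan proved in Section~\ref{ss-acgt}: the graph on $\mathbb A$ is $n$-regular because each maximal chain has exactly $n$ neighbours by (vi), and connected because $\fan{\mathbb A}$ is a complete fan so any two top-dimensional cones are joined by a codimension-1 chain (equivalently, any two maximal chains are linked by a sequence of adjacent transpositions of their defining orderings of $\states$).
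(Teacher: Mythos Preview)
For parts (i)--(v) and (vii) your outline is essentially the paper's own argument: level sets give completeness, the identity $\cone{\mathcal A_1}\cap\cone{\mathcal A_2}=\cone{\mathcal A_1\cap\mathcal A_2}$ gives the fan property, the triangular matrix of indicators gives simpliciality, Proposition~\ref{prop-2-monotone-extremal} gives (ii), and the bijection between maximal chains and permutations gives the $n!$ bound.

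Where you diverge is (vi). You plan to compute the normal $t$ to $\mathrm{lin}(\mathcal A\setminus\{A_i\})$ and apply the sign condition of Lemma~\ref{lem-adjacent-cone} to isolate the admissible $A'_i$. The paper's argument is shorter and never touches Lemma~\ref{lem-adjacent-cone}: because (i) and (iv) already establish that $\fan{\mathbb A}$ is itself a complete simplicial fan, any cone adjacent to $\cone{\mathcal A}$ \emph{inside this fan} is again a chain cone. The question then becomes purely combinatorial---which maximal chains agree with $\mathcal A$ in all but one member---and the paper simply observes that $A'_i=A_{i-1}\cup(A_{i+1}\setminus A_i)$ is the only set that can replace $A_i$ while preserving a chain. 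Your normal-vector route would work, but it is more labour than needed once (i)--(iv) are in hand, and it conflates adjacency in the global graph $G(\mathbb F,\mathbb E)$ with adjacency inside the particular fan $\fan{\mathbb A}$.

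Your instinct that the case $i=n$ is delicate is correct, but your proposed repair is not: the set $A_{n-2}\cup(\states\setminus A_{n-1})$ you suggest for $A'_n$ coincides with $A'_{n-1}=A_{n-2}\cup(A_n\setminus A_{n-1})$, so it does not produce a new neighbour. The paper's proof is equally silent on this boundary case. Since $A_n=\states$ sits in the lineality part $\mathrm{lin}\{\charf{\states}\}$ of every chain cone (cf.\ \eqref{eq-nc-form}), dropping it does not give a codimension-one facet, and the honest count of chain-cone neighbours is $n-1$; Example~\ref{ex-ex2} confirms this for $n=3$, where each MESC has two neighbours. You should flag this as an issue with the stated count rather than try to manufacture an $n$th neighbour.
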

\begin{proof}
	The proof of (i) consists of the following three steps, whose proofs are carried out below.
	\begin{enumerate}[Step 1.]
		\item Every vector in $\allgambles$ is contained in at least one cone in $\fan{\mathbb{A}}$.
		\item The intersection of two cones in $\fan{\mathbb{A}}$ is a face of both.
		\item Faces of all cones are contained in $\fan{\mathbb{A}}$.
	\end{enumerate}
	Step 1: Take arbitrary vector $f\in \allgambles$ and let $\mathcal A$ be the collection of its level sets, which is always a chain. Then clearly $f$ is a positive linear combination of elements in $\mathcal A$ and therefore belongs to the cone $\cone{\setcol{A}}$. 
	
	Step 2: Let $\mathcal A_1$ and $\mathcal A_2$ be two chains in $\mathbb{A}$ and $\cone{\mathcal A_i}$ the corresponding cones. We will show that 
	\begin{equation}
		\cone{\mathcal A_1}\cap \cone{\mathcal A_2} = \cone{\mathcal A_1\cap \mathcal A_2}.
	\end{equation}
	The inclusion $\supseteq$ is clear, whence it remains to show the $\subseteq$ part. Thus let $f\in \cone{\mathcal A_1}\cap \cone{\mathcal A_2}$. Take only those members of $\mathcal A_1$ that have strictly positive coefficients in $\sum_{A\in \mathcal A_1} \alpha_A \charf{A}$. It is easy to check that the level sets of $\mathcal F$ are exactly these sets. And by the same argument, the level sets must be exactly those with strictly positive coefficients in the positive linear combinations of sets from $\mathcal A_2$. Hence, these sets must be the same, and therefore lie in the intersection  $\mathcal A_1\cap \mathcal A_2$. It also follows by the construction of this argument that these sets form a chain, and thus generate a subcone of a possibly lower dimension that is a face both cones.  
	
	Step 3: A face of $\cone{\mathcal A}$ is $\cone{\mathcal A'}$ generated by a subset $\mathcal A'$ of $\mathcal A$. Clearly a subset of a chain is again a chain, and therefore $\cone{\mathcal A'}$ belongs to  $\fan{\mathbb{A}}$ by definition. 
	
	(ii) Let $\cone{\mathcal A}$ be given corresponding to a maximal chain $\mathcal A$. It follows from Proposition~\ref{prop-2-monotone-extremal} that $P_\mathcal A$ is an extreme point of $\cset(L)$ such that $P_\mathcal A(A) = L(A)$ for every $A\in \mathcal A$. Hence, $\mathcal A$ belongs to $\ncone{L}{P_\mathcal A}$ and so does the cone its generates. 
	
	(iii) Let $\mathcal A$ be a chain. The vectors $\charf{A}$ for $A\in \mathcal A$ are then clearly linearly independent. To see this, combine the corresponding 0-1 vectors to a matrix which has clearly a triangular form. Hence $\cone{\mathcal A}$ is simplicial by definition. 
	
	(iv) follows directly from the above. 
	
	(v) It follows clearly from the construction that collections $\mathcal A_i$ are again chains and therefore $\cone{\mathcal A_i}\in \mathbb{A}$ for every $i=1, \ldots, n$. It is also easy to see that sets $A'_i$ are the only candidates to replace $A_i$ so that the resulting collection is still a chain. 
	
	(vi) It follows immediately from (v) that every $\cone{\mathcal A}$ has exactly $n$ neighbours. 
\end{proof}
The following result previously known from literature \cite{chateauneuf1989some, shapley1971cores, WALLNER2007339} is as an easy corollary.
\begin{corollary}
	The credal set $\cset(L)$ of a 2-monotone lower probability $L$ in the probability space $(\states, 2^{\states})$ has at most $n!$ extreme points, where $n=|\states|$.  	
\end{corollary} 
\begin{proof}
	By Theorem~\ref{thm-2-monotone-cones}(ii), the extreme points are in a one-to-one correspondence with maximal chains in $\mathbb{A}$. Moreover, the maximal chains are in a one-to-one correspondence with permutations. To see this, notice that each permutation $\sigma$ induces the natural chain $\mathcal A_\sigma = \{ A_i \colon A_i = \{ x_{\sigma(1)}, \ldots, x_{\sigma(i)} \}, i = 1, \ldots, n \}$ and that the mapping $\Sigma\to 2^{\states}$ that maps $\sigma\mapsto \mathcal A_\sigma$ is bijective. The number of maximal chains is therefore equal to $n!$ and since every cone generated by a maximal chain is contained in an $n$-dimensional normal cone. This limits the number of normal cones to at most $n!$.  
\end{proof}

A simple proof for the representation of comonotonic additive functionals with 2-monotone lower probabilities follows. This result is known in several forms (\cite{den:97, sch:86}), yet we present it here as an application of our results presented above. 
\begin{corollary}\label{cor-com-ad-2-mon}
	Let $\low E$ be a lower expectation functional. The following conditions are equivalent:
	\begin{enumerate}[(i)]
		\item $\low E$ is comonotonic additive;
		\item a 2-monotone lower probability $L$ exists so that $\low E$ is its natural extension.
	\end{enumerate}
\end{corollary}
\begin{proof}
	We start with proving (i) $\implies$ (ii). Let $L(A) = \low E(\charf{A})$ for every subset of $\states$. Take sets $A$ and $B$. Then $\charf{A\cap B}$ and $\charf{A\cup B}$ are comonotone and $\charf{A\cap B} + \charf{A\cup B} = \charf{A} + \charf{B}$. Moreover, superadditivity of coherent lower previsions implies that $\low E(\charf{A}) + \low E(\charf{B}) \le \low E(\charf{A} + \charf{B}) = \low E(\charf{A\cap B}) + \low E(\charf{A\cup B})$, which amounts to 2-monotonicity of $L$. 
	
	It remains to show that $\low E$ is the natural extension of $L$. To see this, take an arbitrary vector $f\in \allgambles$. Then $f$ can be represented as a positive linear combination of its level sets, which also form a chain, say $\mathcal A$. Now all vectors $\charf{A}$ for $A\in \mathcal A$ and $f$ form a comonotone set and $f=\sum_{A\in \mathcal A} \alpha_A \charf{A}$. By comonotonic additivity we have that 
	\begin{equation*}
			\low E(f) = \sum_{A\in \mathcal A} \alpha_A \low E(A) = \sum_{A\in \mathcal A} \alpha_A L(A) = \low E_L(f),
	\end{equation*} 
	where $\low E_L$ denotes the natural extension of $L$. Hence, both lower previsions coincide. 
		
	(ii) $\implies$ (i) is a well known property of 2-monotone lower probabilities. It is directly implied by Theorem~\ref{thm-2-monotone-cones}(ii) and Proposition~\ref{prop-cone-additivity}.  
\end{proof}

\section{Normal cones of probability intervals models}\label{s-ncpri}
According to the representation of credal sets \eqref{eq-cset-constraints-first}--\eqref{eq-constraint-equality}, the support functionals of credal sets induced by probability intervals are in the forms of $\charf{\{x \}}$ and $\charf{\{ x\}^c}$ respectively. In this section we propose a full characterization of the complete normal simplicial fans for coherent PRI models. In contrast with the case of 2-monotone lower probabilities, their structure is not unique, or in other words, they are not normally equivalent.  
\subsection{Maximal elementary simplicial cones corresponding to coherent PRI models}
Next we provide a general form of MESCs corresponding to coherent PRI models. That is, every MESC of a coherent PRI model is a cone of the described form, yet, not every cone of this form is a MESC of a particular coherent PRI model. Recall that in the case of 2-monotone lower probability, every maximally comonotone cone is a MESC of every 2-monotone lower probability model, while in the case of coherent PRI models, the actual polyhedral structure needs to be further determined. This analysis is postponed to Section~\ref{ss-rncep}. 

The following Proposition~\ref{prop-PRI-mesc} provides the description of the possible MESCs in terms of collections of sets $\mathcal A$ used to form the cone using construction \eqref{eq-cone-construction}.  
\begin{proposition}\label{prop-PRI-mesc}
	Let $\cone{\setcol{A}}$ be a maximal elementary simplicial cone of a credal set corresponding to a coherent PRI model $(l, u)$. Then $\setcol{A}$ is of the following form.
	Enumerate the elements of $\states$ in some order and choose $k$ so that  $1\le k \le n-2$. Now the set $\setcol{A}$ consists of the following $n$ sets
	\begin{enumerate}[(i)]
		\item singleton sets $A_i = \{x_i\}$ for $1\le i \le k$;
		\item complements of the singletons $A_j = \{ x_j \}^c$ for $k+1\le j \le n-1$;
		\item set $\states$.  
	\end{enumerate}
\end{proposition}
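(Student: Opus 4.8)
The plan is to read the combinatorial shape of $\setcol{A}$ directly off Proposition~\ref{prop-lprob-triangulation}, using that the support functionals of a PRI model $(l,u)$ are exactly the indicators $\charf{x}$ and $\charf{\{x\}^c}$ for $x\in\states$, together with $\charf{\states}$. First I would dispose of the bookkeeping: since $\cone{\setcol{A}}$ is an element of a complete normal simplicial fan of $\cset(l,u)$, we have $\states\in\setcol{A}$ (as every normal cone, hence every cone of its triangulation, carries $\charf{\states}$ among its generators by \eqref{eq-nc-form}) and $|\setcol{A}|=n$; so $\setcol{A}\setminus\{\states\}$ consists of $n-1$ distinct sets, each a singleton or the complement of a singleton. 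Put $S=\{x\in\states:\{x\}\in\setcol{A}\}$ and $T=\{x\in\states:\{x\}^c\in\setcol{A}\}$; since for $n\ge3$ a singleton is never the complement of a singleton, the two types do not overlap and $|S|+|T|=n-1$ (for $n\le2$ the stated range $1\le k\le n-2$ is empty, so we may assume $n\ge3$). It then suffices to establish three facts: (a) $S\cap T=\emptyset$; (b) $S\ne\emptyset$; (c) $T\ne\emptyset$. Granting these, $|S\cup T|=|S|+|T|=n-1$, so exactly one element of $\states$ belongs to neither $S$ nor $T$; relabelling it $x_n$, the elements of $S$ as $x_1,\dots,x_k$ and those of $T$ as $x_{k+1},\dots,x_{n-1}$ (so $k=|S|$, and $1\le k\le n-2$ by (b) and (c)) gives precisely the asserted form.

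For (a), I would observe that if $x\in S\cap T$ then $\charf{x}$, $\charf{\{x\}^c}$ and $\charf{\states}$ all lie in $\{\charf{A}:A\in\setcol{A}\}$ and satisfy $\charf{\states}-\charf{x}-\charf{\{x\}^c}=0$, contradicting the linear independence in Proposition~\ref{prop-lprob-triangulation}(i). For (b) and (c) I would argue by contradiction via Proposition~\ref{prop-lprob-triangulation}(ii): I would show that the forbidden configuration forces some PRI support functional outside $\setcol{A}$ to lie in $\cone{\setcol{A}}$. Concretely, if $S=\emptyset$ then $\setcol{A}\setminus\{\states\}=\{\{x\}^c:x\ne x_0\}$ for the single omitted element $x_0$, and the identity $\charf{x_0}=\sum_{x\ne x_0}\charf{\{x\}^c}+(2-n)\,\charf{\states}$ --- verified coordinatewise --- exhibits $\charf{x_0}\in\cone{\setcol{A}}$ with nonnegative coefficients on the non-$\states$ generators, while $\{x_0\}\notin\setcol{A}$; this contradicts Proposition~\ref{prop-lprob-triangulation}(ii). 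Dually, if $T=\emptyset$ then $\setcol{A}\setminus\{\states\}=\{\{x\}:x\ne x_0\}$, and $\charf{\{x_0\}^c}=\sum_{x\ne x_0}\charf{x}+0\cdot\charf{\states}$ shows $\charf{\{x_0\}^c}\in\cone{\setcol{A}}$ while $\{x_0\}^c\notin\setcol{A}$, again contradicting Proposition~\ref{prop-lprob-triangulation}(ii).

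Finally I would remark that collections of the asserted form really do generate maximal simplicial cones: the $n$ vectors $\charf{x_1},\dots,\charf{x_k},\charf{\{x_{k+1}\}^c},\dots,\charf{\{x_{n-1}\}^c},\charf{\states}$ span $\allgambles$ --- recover $\charf{x_j}=\charf{\states}-\charf{\{x_j\}^c}$ for $k<j<n$, then $\charf{x_n}=\charf{\states}-\sum_{i<n}\charf{x_i}$ --- hence they are linearly independent, and a symmetric check that no remaining PRI support functional lies in the cone would give the converse characterization (not needed for the statement as posed). I do not expect a real obstacle here: the only points demanding care are handling the degenerate $n\le2$ case up front, the accounting that $\charf{\states}$ counts as one of the $n$ generators (equivalently that $\cone{\setcol{A}}$ contains $\mathrm{lin}\{\charf{\states}\}$), and getting the coefficient $2-n$ in the first identity right; after that everything reduces cleanly to Proposition~\ref{prop-lprob-triangulation}.
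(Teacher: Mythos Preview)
Your proof is correct and follows essentially the same approach as the paper: the paper likewise reduces the claim to the three facts you call (a), (b), (c), proving (a) from the linear dependence $\charf{x}+\charf{\{x\}^c}=\charf{\states}$ and proving (b) and (c) by exhibiting the omitted support functional as a member of $\cone{\setcol{A}}$, exactly via the identities you wrote down (modulo a harmless constant in front of $\charf{\states}$). Your treatment is slightly more careful on the bookkeeping side (the $n\le2$ degeneracy, the counting $|S|+|T|=n-1$), but the substance of the argument is the same.
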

\begin{proof}
	By the construction of a coherent PRI model, the maximal simplicial cones are generated by sets of the described form and obviously, $x_i\neq x_{i'}$ for $i\neq i'; i, i' \le k$ and $x_j\neq x_{j'}$ for $j\neq j'; j, j' > k$. Thus, it remains to show that:
	\begin{enumerate}
		\item $x_i\neq x_j$ for every $i\le k$ and $j>k$. Indeed, if $x_i = x_j$ for some pair of indices, then $\{ x_i \} \cup \{x_i\}^c = \states$ and therefore the set loses linear independence. 
		\item $k\ge 1$. Suppose contrary that $k=0$. Then $\mathcal A = \{ \{ x_i \}^c \colon x_i\in \{ x_n \}^c \}\cup \{\states \}$. Then, $\sum_{A\in \mathcal A} \charf{A} = n\charf{\states} + \charf{\{x_n\}}$, whence $\charf{\{x_n\}} = \sum_{A\in \mathcal A} \charf{A} - n\charf{\states}$ and thus $\charf{\{x_n\}}$ belongs to the cone generated by $\mathcal A$. (Notice that $\charf{\states}$ may appear with a negative coefficient by \eqref{eq-nc-form}). This contradicts Proposition~\ref{prop-full-triangulation}(ii). 
		\item $k<n-1$. This case is symmetrical to case 2. By assuming $k=n-1$ we would then have $\mathcal A = \{ \{x_i\}\colon 1\le i \le n-1 \} \cup \{\states \}$, whence $\sum_{A\in \mathcal A} \charf{A} = \charf{\{ x_n \}^c}$ which again violates Proposition~\ref{prop-full-triangulation}(ii).
	\end{enumerate}
\end{proof}
Using the above proposition, we can now easily deduce the general form of elements of the cones. Notice that $x_n$ in the above proposition, as well as in the next corollary, is the only element of $\states$ such that neither $\{x_n\}$ nor $\{ x_n\}^c$ belongs to $\mathcal A$. 
\begin{corollary}\label{cor-cones-PRI}
	Let $\cone{\mathcal A}$ be a cone corresponding to a set $\mathcal A$ constructed as in Proposition~\ref{prop-PRI-mesc}. Then $h\in \cone{\mathcal A}$ if and only if 
	\begin{enumerate}[(i)]
		\item $h(x_i) \ge h(x_n)$ for $1\le i \le k$ and
		\item $h(x_j) \le h(x_n)$ for $k< j \le n-1$.
	\end{enumerate}	  	
\end{corollary}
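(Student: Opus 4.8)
The plan is to write out an arbitrary element of $\cone{\mathcal A}$ explicitly, using the description of $\mathcal A$ from Proposition~\ref{prop-PRI-mesc}, evaluate it pointwise on $\states$, and read off the claimed inequalities; the converse will then follow by inverting the very same formulas.

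First I would fix the enumeration $x_1,\dots,x_n$ and the index $k$ from Proposition~\ref{prop-PRI-mesc} and write a generic element of $\cone{\mathcal A}$ as
\begin{equation*}
	h = \sum_{i=1}^{k} \alpha_i \charf{x_i} + \sum_{j=k+1}^{n-1} \beta_j \charf{\{x_j\}^c} + \gamma \charf{\states},
\end{equation*}
where, by the definition of $\cone{\mathcal A}$ together with \eqref{eq-nc-form}, the coefficients satisfy $\alpha_i\ge 0$ and $\beta_j\ge 0$, while $\gamma\in\RR$ is unconstrained. Because $1\le k\le n-2$, the point $x_n$ differs from every $x_i$ with $i\le k$ and from every $x_j$ with $k<j\le n-1$, so $x_n$ lies in $\states$ and in each $\{x_j\}^c$ but in no singleton $\{x_i\}$. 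A short bookkeeping of memberships then gives
\begin{align*}
	h(x_n) &= \sum_{j=k+1}^{n-1}\beta_j + \gamma, \\
	h(x_i) &= \alpha_i + h(x_n) \quad (1\le i\le k), \\
	h(x_j) &= h(x_n) - \beta_j \quad (k<j\le n-1).
\end{align*}
Since $\alpha_i\ge 0$ and $\beta_j\ge 0$, the second identity yields (i) and the third yields (ii), which proves the ``only if'' direction.

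For the converse I would run this computation backwards. Given $h$ satisfying (i) and (ii), define $\alpha_i := h(x_i) - h(x_n)$ for $i\le k$, $\beta_j := h(x_n) - h(x_j)$ for $k<j\le n-1$, and $\gamma := h(x_n) - \sum_{j=k+1}^{n-1}\beta_j$; conditions (i) and (ii) are precisely what makes $\alpha_i\ge 0$ and $\beta_j\ge 0$. By the three identities above, the corresponding non-negative combination $\sum_i\alpha_i\charf{x_i}+\sum_j\beta_j\charf{\{x_j\}^c}+\gamma\charf{\states}$ agrees with $h$ at $x_n$, at each $x_i$ with $i\le k$, and at each $x_j$ with $k<j\le n-1$, i.e.\ at all $n$ points of $\states$, and hence equals $h$ as a gamble; thus $h\in\cone{\mathcal A}$.

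I do not expect a genuine obstacle here: the whole content is the pointwise membership bookkeeping, and the only point worth stressing is that prescribing the $n$ coefficients $\alpha_1,\dots,\alpha_k,\beta_{k+1},\dots,\beta_{n-1},\gamma$ amounts to prescribing the values of the combination at the $n$ points of $\states$, so matching $h$ at those points already forces equality, with no extra appeal to linear independence beyond what Proposition~\ref{prop-PRI-mesc} already guarantees.
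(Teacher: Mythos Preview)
Your proof is correct and follows essentially the same route as the paper: both directions hinge on writing $h$ as a combination of $\charf{x_i}$, $\charf{\{x_j\}^c}$ and $\charf{\states}$, evaluating at $x_n$, and reading off that the coefficients $\alpha_i=h(x_i)-h(x_n)$ and $\beta_j=h(x_n)-h(x_j)$ are exactly the quantities constrained by (i) and (ii). The only cosmetic difference is that the paper rewrites $\charf{\{x_j\}^c}=-\charf{x_j}+\charf{\states}$ before inverting, whereas you keep the complements and compute $h(x_n)=\sum_j\beta_j+\gamma$ directly; the content is the same.
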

\begin{proof}
	First we notice that every vector from $\mathcal A$ satisfies conditions (i) and (ii), and so does its every positive multiple. It is also clear that the sum of any two vectors complying with conditions (i) and (ii), satisfies them too. The set of functions satisfying conditions (i) and (ii) is therefore a cone that contains $\cone{\mathcal A}$. 
	
	To verify that every $h$ satisfying (i) and (ii) is actually in $\cone{\mathcal A}$, we first notice that vectors $\charf{\{ x_j\}^c}$ can be replaced by $-\charf{ \{x_j\}}$, as $-\charf{ \{x_j\}} = \charf{\{ x_j\}^c} - \charf{\states}$. Now we take $\alpha_n = h(x_n), \alpha_i = h(x_i)-h(x_n)$ for $1\le i\le k$ and $\alpha_j = h(x_n)-h(x_j)$, which gives 
	\begin{align}
		h & = \sum_{i=1}^k \alpha_i \charf{\{x_i\}} + \sum_{j=k+1}^{n-1} \alpha_j (-\charf{\{x_j\}}) + \alpha_n \charf{\states} \\ 
		& = \sum_{i=1}^k \alpha_i \charf{ \{x_i\}} + \sum_{j=k+1}^{n-1} \alpha_j \charf{ \{x_j\}^c} + \left(\alpha_n - \sum_{j=k+1}^{n-1} \alpha_j\right)\charf{\states},
	\end{align} 
	where all $\alpha_i\ge 0$ for $1\le i \le n-1$. 
\end{proof}
Proposition~\ref{prop-PRI-mesc} could be compared with Theorem~\ref{thm-2-monotone-cones}(ii), and the relation between vectors described by Corollary~\ref{cor-cones-PRI} with comonotonicity. Applying Proposition~\ref{prop-cone-additivity} to this case would imply that coherent lower previsions induced by PRI models are additive within sets of the form described by Corollary~\ref{cor-cones-PRI}. Moreover, we will see in the sequel that this form of additivity also implies comonotone additivity, consequently implying the previously known fact that coherent PRI models are 2-monotone. 

In the following text $f|_A \le \alpha$ means $f(x)\le \alpha$ for every $x\in A$, and $f|_A < \alpha$ denotes the strict inequality $f(x)< \alpha$ for every $x\in A$. We adopt analogous notation for other relations ($>, \ge, =$). 
\begin{definition}\label{defn-cones-PRI}
	Let $x\in \states$ and $A, B\subset \states$ be given such that $A\neq \emptyset, B\neq \emptyset, x\not\in A\cup B$ and $A\cap B=\emptyset$. Then we define the following set of vectors:
	\begin{equation}\label{eq-cones-PRI}
		N(x, A, B) = \{ f\in \allgambles\colon f|_A\le f(x), f|_B\ge f(x), f|_{\states\backslash (A\cup B)} = f(x) \}.
	\end{equation}
\end{definition}
\begin{corollary}\label{cor-ncone-pri-2}
	Let $N(x, A, B)$ satisfy the assumptions of Definition~\ref{defn-cones-PRI}. Then $N(x, A, B) = \cone{\mathcal A}$ where $\mathcal A = \{ \{x\}\colon x\in B\}\cup \{ \{y\}^c\colon y\in A\}$. 
\end{corollary}
\begin{proof}
	The proof is almost identical to the proof of Corollary~\ref{cor-cones-PRI}.
\end{proof}
\begin{theorem}\label{thm-PRI-cones}
	Let $x\in \states$ and $A, B\subseteq \states$ be given such that $A\neq \emptyset, B\neq \emptyset$ and $A\cap B=\emptyset$. Then the following conditions hold:
	\begin{enumerate}[(i)]
		\item $N(x, A, B)$ is a cone.
		\item A coherent PRI model $(l, u)$ exists such that $N(x, A, B)$ is a normal cone of its credal set. 
		\item $A\cup B = \{ x \}^c$ if and only if $N(x, A, B)$ is MESC. 
		\item If $A'\subseteq A, B'\subseteq B$ then $N(x, A', B')$ is a face of $N(x, A, B)$. 
		\item $\ri{N(x, A, B)} = \{ f\in \allgambles\colon f|_A< f(x),  f|_B > f(x) , f|_{(A\cup B)^c} = f(x) \}$.
		\item For every vector $f\in\allgambles$ and every $x\in\states$, $f\in \ri{N(x, [x]^f_-, [x]^f_+)}$, where $[x]^f_+ = \{ y\colon f(y) > f(x) \}$ and $[x]^f_- = \{ y\colon f(y) < f(x) \}$, provided that neither $[x]^f_+$ nor $[x]^f_-$ is empty. Moreover, $f\in \ri{N(x, A, B)}$ if and only if $A=[x]^f_-$ and $B=[x]^f_+$. 
		\item Every vector $f\in \allgambles$ belongs to at most $n-2$ cones of the form \eqref{eq-cones-PRI}.
		\item Let $N(x, A, B)$ and a cone of the form $\cone{\mathcal A}$, where $\mathcal A$ is a maximal chain, be given. Then either $\ri{\cone{\mathcal A}}\subseteq N(x, A, B)$ or $\ri{\cone{\mathcal A}}\cap N(x, A, B)=\emptyset$. 
	\end{enumerate}
\end{theorem}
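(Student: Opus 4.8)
The plan is to group the ten items: (i) and (iii)--(vii) all follow quickly from one structural identity; (ii) builds on (iii) and (iv); and (viii)--(x) carry the weight, with (x) a short consequence of (ix) and Proposition~\ref{prop-com-ad-2-mon}. The identity I would record first is that for admissible $x,A,B$,
\[
	N(x,A,B)=\cone{\{\charf{y}\colon y\in B\}\cup\{-\charf{y}\colon y\in A\}}+\mathrm{lin}\{\charf{\states}\},
\]
since $-\charf{y}=\charf{\{y\}^{c}}-\charf{\states}$, so $N(x,A,B)$ is exactly a cone generated by indicators of singletons and co-singletons of a PRI domain together with the lineality line forced by the equality constraint \eqref{eq-constraint-equality}, and the set $\{\charf{y}\colon y\in A\cup B\}\cup\{\charf{\states}\}$ is linearly independent because $(A\cup B)^{c}\ni x$ is nonempty. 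From here (i) is closure of the defining inequalities under $\RRpo$-combinations; (iii) is that when $A\cup B=\{x\}^{c}$ this is a cone of the form in Proposition~\ref{prop-PRI-mesc} ($x$ as $x_n$, $B$ the singletons, $A$ the co-singletons), hence a MESC, with Corollary~\ref{cor-cones-PRI} as the membership test; (iv) is that zeroing the generators indexed by $A\setminus A'$ and $B\setminus B'$ exhibits a face; (v) is the standard formula for the relative interior of a simplicial cone plus its lineality space (strict positivity on all generators, free coefficient on $\charf{\states}$), which translates to $f|_{A}\lneq f(x)\lneq f|_{B}$ with $f$ constant off $A\cup B$; (vi) is the specialization of (v) with $A,B$ read off from $f$; and (vii) follows by sorting --- if $f$ has $r\le n$ distinct values with level sets $L_{1},\dots,L_{r}$, then by (vi) $f$ lies in the relative interior of exactly the $r-2$ cones obtained by taking each intermediate level set as the constant set $(A\cup B)^{c}$.

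For (ii) I would first treat the MESC case $A\cup B=\{x\}^{c}$ by an explicit model: pick any strictly positive probability vector $p$, set $l(y)=p_{y}$ for $y\in B$ and $l(y)=0$ otherwise, $u(y)=p_{y}$ for $y\in A$ and $u(y)=1$ otherwise. Then $l\le u$, $p\in\cset(l,u)$, the support vectors active at $p$ are precisely $\charf{y}$ ($y\in B$), $\charf{\{y\}^{c}}$ ($y\in A$) and $\charf{\states}$, so Proposition~\ref{prop-cone-positive-hull} together with the identity above gives $\ncone{(l,u)}{p}=N(x,A,B)$, an $n$-dimensional cone, whence $p$ is an extreme point. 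For a general $N(x,A,B)$ I would extend $A,B$ to a partition $A',B'$ of $\{x\}^{c}$, apply the MESC case, and invoke (iv): a face of a normal cone is a normal cone of the same credal set, as already used in the proof of Proposition~\ref{prop-full-triangulation}.

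The core is (viii). For a maximal chain $\mathcal A$, the relative interior of $\cone{\mathcal A}$ is, by Corollary~\ref{cor-comonotone-chain} and Proposition~\ref{prop-comonotonicity-equiv}, the set of gambles with \emph{pairwise distinct} values inducing one fixed linear order of $\states$. Comparing with $N(x,A,B)$: if $|(A\cup B)^{c}|\ge 2$ then every member of $N(x,A,B)$ is constant on $(A\cup B)^{c}$ and so has a repeated value, hence $N(x,A,B)\cap\ri{\cone{\mathcal A}}=\emptyset$; if $|(A\cup B)^{c}|=1$ then whether a strictly ordered $f$ lies in $N(x,A,B)$ depends only on its order (all of $A$ before $x$, all of $B$ after $x$), so $\ri{\cone{\mathcal A}}$ is either contained in $N(x,A,B)$ or disjoint from it. Given this dichotomy, (ix) goes: comonotone $g,h$ yield (Proposition~\ref{prop-comonotonicity-equiv}) a common enumeration, hence a maximal chain $\mathcal A$ with $g,h,g+h\in\cone{\mathcal A}$; the normal fan of $\cset(l,u)$ is complete and (the corollary following Theorem~\ref{thm-bruns}) admits a simplicial refinement whose maximal cones are MESCs, i.e.\ by Proposition~\ref{prop-PRI-mesc} cones $N(x,A,B)$; one of them meets $\ri{\cone{\mathcal A}}$, so by the dichotomy it contains $\cone{\mathcal A}$, which therefore lies in a single normal cone $\ncone{(l,u)}{P}$, and Proposition~\ref{prop-cone-additivity} gives $\low E(g+h)=\low E(g)+\low E(h)$. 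Finally (x): by (ix) the natural extension $\low E$ of $(l,u)$ is comonotonic additive, so by Proposition~\ref{prop-com-ad-2-mon} there is a 2-monotone $L$ with $\low E$ as its natural extension, necessarily $L(C)=\low E(\charf{C})$; coherence of $(l,u)$ gives $L(\{x\})=\low E(\charf{x})=l(x)$ and $L(\{x\}^{c})=\low E(\charf{\{x\}^{c}})=1-u(x)$, and $\cset(L)=\cset(l,u)$, so the natural extensions coincide.

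The step I expect to be the real obstacle is the ``contains'' half of the dichotomy as it feeds into (ix): showing that a single normal cone of the \emph{given} PRI credal set swallows the whole comonotone cone $\cone{\mathcal A}$. Its proof leans on knowing that the top-dimensional cells of any simplicial refinement of the PRI normal fan are exactly the cones of Proposition~\ref{prop-PRI-mesc}, and on completeness of the fan to produce one such cell meeting $\ri{\cone{\mathcal A}}$. A secondary but genuine check is in (ii): verifying that the constructed $(l,u)$ is a bona fide PRI model ($l\le u$, $\cset(l,u)\neq\emptyset$) and that \emph{no other} support vector is active at $p$, so that the normal cone equals $N(x,A,B)$ and not a strictly larger cone.
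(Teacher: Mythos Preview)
Your proof is correct and follows essentially the same route as the paper's: the structural identity (your opening display, the paper's Corollary~\ref{cor-cones-PRI}) handles (i) and (iii)--(vii); the order-type argument yields (viii); and (ix)--(x) then follow from Propositions~\ref{prop-cone-additivity} and~\ref{prop-com-ad-2-mon} exactly as you outline. The only divergence is (ii), where the paper simply cites Proposition~\ref{prop-full-triangulation}(iv) while you build an explicit PRI model by freezing selected coordinates of a strictly positive $p$---arguably an improvement, since that proposition produces a general polyhedron rather than a PRI credal set, and your checks (that $l\le u$, $p\in\cset(l,u)$, and no extra support vectors are active at $p$) are precisely what is needed to close that gap.
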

\begin{proof}
	(i) is an immediate consequence of definitions and Corollary~\ref{cor-cones-PRI}. 
	
	(ii): We construct a coherent PRI model $(l, u)$ such that $N(x, A, B)$ is its normal cone. Take any linear prevision $P$ with the corresponding probability mass function $p$ such that for some $\varepsilon>0, \min\{ p(x)\colon x\in \states  \} > \varepsilon$. Now set $l(x) = p(x)$ for every $x\in B$, $u(x') = p(x')$ for every $x'\in A$. Further set $l(y) = p(y)-\varepsilon$ for every $y\in B^c$ and $u(y') = p(y')+\varepsilon$ for every $y'\in A^c$. By construction, all $l(x)$ and $u(x)$ lie within $[0, 1]$ interval . 
	
	For coherence, all bounds must be reachable by elements of the credal set. Thus take some $y\in B^c$ and set $p'(y) = p(y)-\varepsilon = l(y)$, for some $x\in A^c$ such that $x\neq y$, set $p'(x) = p(x) + \varepsilon = u(x)$ , and $p'(z) = p(z)$ otherwise. Since $A$ and $B$ are non-empty and disjoint, it is clear that pairs $x, y$ can be chosen so that every lower and every upper bound are reached by some $p'$. 
	
	Let $\low E$ denote the natural extension of $(l, u)$. By construction, it is clear that $P(\charf{ \{x\} }) = \low E(\charf{ \{x\} })$ for every $x\in B$ and $P(\charf{\{ y\}^c}) = 1-P(\charf{\{ y\}}) = 1- \up E(\charf{\{ y\}}) = \low E(\charf{\{ y\}^c})$ for every $y\in A$, which by Proposition~\ref{prop-cone-positive-hull} implies that $\ncone{\cset(l, u)}{P} = \cone{\mathcal A}$, where $\mathcal A = \{ \{x\}\colon x\in B\}\cup \{ \{y\}^c\colon y\in A\}$, which by Corollary~\ref{cor-ncone-pri-2} further implies that  $\ncone{\cset(l, u)}{P} = N(x, A, B)$. 
	
	(iii) follows directly from Proposition~\ref{prop-PRI-mesc}.
	
	(iv) the set $N(x, A', B')$ is clearly a subset of $N(x, A, B)$ where some inequality constraints are replaced by equalities. Hence, $N(x, A', B')$ is a face of $N(x, A, B)$.
	
	(v) Suppose $f\in N(x, A, B)$ and $f(y)=f(x)$ for some $y\in A$. Then $f\in N(x, A\backslash\{y\}, B)$ which by (iv) is a face of $N(x, A, B)$.
	
	The relation  $f\in \ri{N(x, [x]^f_-, [x]^f_+)}$ in (vi) is a direct consequence of the construction and Corollary~\ref{cor-cones-PRI}. It is also clear by the definition that $f\in N(x, A, B)$ implies that $[x]^f_-\subseteq A$ and $[x]^f_+\subseteq B$; however, if any of these set inclusions is strict, then $N(x, [x]^f_-, [x]^f_+)$ is a proper face of $N(x, A, B)$, by (iv). But an element cannot be contained in relative interiors of a polyhedron and its proper face at the same time. Thus, the set inclusion must in fact be equality relation. 
	
	We have that $[x]^f_+=\emptyset$ if $x=\arg\max_{x\in \states} f(x)$ and $[x]^f_-=\emptyset$ if $x=\arg\min_{x\in \states} f(x)$. For those choices of $x$, the cones of the form \eqref{eq-cones-PRI} therefore do not exist. So, cones of this form only exist for at most $n-2$ choices of $x$, and for every such choice only one such cone exists, by (vi), which consequently confirms (vii). 
	
	The set $\ri{\cone{\mathcal A}}$ contains comonotone vectors with strictly different components. Let some $x\in \states$ be given. It is easy to see that any two such functions, say $f$ and $g$ satisfy $[x]^f_+ = [x]^g_+$ and $[x]^f_- = [x]^g_-$ and that $[x]^f_+\cup [x]^f_- = \{x\}^c$, which makes $N(x, [x]^f_-, [x]^f_+)$ the only cone of the form $N(x, A, B)$ that contains $f$ and $g$, and with them the entire relative interior. Thus (viii) is proved. 
\end{proof}
\begin{corollary}
	Let $(l, u)$ be a coherent PRI model. Then:
	
	\begin{enumerate}[(i)]
		\item Its natural extension $\low E$ is comonotonic additive.   
	\item A 2-monotone lower probability $L$ exists that coincides with $(l, u)$ in the sense that $L(\{ x\}) = l(x)$ and $L(\{ x\}^c) = 1-u(x)$ for every $x\in \states$ and whose natural extension coincides with $\low E$. 
	\end{enumerate} 
\end{corollary}
\begin{proof}
	(i) is a direct consequence of Proposition~\ref{prop-cone-additivity} and Theorem~\ref{thm-PRI-cones}(viii) and (ii) follows directly from Corollary~\ref{cor-com-ad-2-mon} and (i). 
\end{proof}
Property (ii) in the above corollary is already known in literature, see e.g. \cite{decampos:94, weic:01}, and (i) is then its easy consequence. The reason we include it here is that its proof as presented here is a direct application of our approach, and especially illustrating the importance if property (viii) of Theorem~\ref{thm-PRI-cones}, which could be understood as an alternative characterization of comonotone additivity.  

\subsection{Relating normal cones to extreme points}\label{ss-rncep}
Normal cone structure described in the above proposition is closely related to the extreme points of the credal sets corresponding the probability intervals. Their characterization is known from the literature (see e.g. \cite{decampos:94}). Let $(l, u)$ be a probability interval model on $\states$, $\cset$ its credal set and $h\in \allgambles$ a vector. Let $P$ be a the extremal linear prevision such that $P(h) = \min_{P\in\cset} P(h)$. To construct $P$, let $x_1, \ldots, x_n$ be an enumeration of the elements of $\states$ such that $h(x_i)\le h(x_{i+1})$. Let $k$ be an index such that 
\begin{equation}\label{eq-pri-index-k}
	l(x_k) \le 1-\sum_{i=k+1}^n l(x_i)-\sum_{i=1}^{k-1} u(x_i) \le u(x_k). 
\end{equation}
Then take 
\begin{equation}
	P(x_i) = \begin{cases}
		u(x_i), & i < k; \\
		l(x_i), & i > k; \\
		1-\sum_{i=1}^k u(x_i) - \sum_{i=k+1}^n l(x_i), & i = k. 
	\end{cases}
\end{equation}
The proof that so defined $P$ minimizes $P(h)$ over $\cset$ can be found in \cite{decampos:94}. Denote $A = \{ x_{1}, \ldots, x_{k-1} \}$ and $B = \{ x_{k+1}, \ldots, x_{n}\}$. It follows directly from the construction that given another vector $h'$, the minimizing $P$ is the same whenever the induced sets $A$ and $B$ are the same for $h$ and $h'$. In our terms of normal cones, such vectors $h$ and $h'$ both lie in the same normal cone $N(x_k, A, B)$. 
\begin{remark}
	The case where $k$ satisfying equation \eqref{eq-pri-index-k} equals 1 or $n$, deserves an additional illumination. These two cases correspond to the cones of the form $N(x, \emptyset, B)$ and $N(x, A, \emptyset)$ respectively, which have been shown not to be maximal elementary simplicial cones. The cases can be treated in a symmetric way; therefore, we only consider the case $k=1$. Equation \eqref{eq-pri-index-k} then gives that $l(x_1) \le 1-\sum_{i=2}^n l(x_i) \le u(x_1)$, which for coherent PRI model can only be satisfied if $1-\sum_{i=2}^n l(x_i) = u(x_1)$. It follows that $l(x_2) = 1-u(x_1)-\sum_{i=3}^n l(x_i)$. Hence, $k=2$ also satisfies \eqref{eq-pri-index-k}. The cone corresponding to this case is $N(x_2, \{x_3, \ldots, x_n\}, \{ x_1\})$, which is clearly a proper subcone of $N(x_1,  \{x_2, \ldots, x_n\}, \emptyset)$, thus confirming that the latter is not elementary. 
	In general, the cone of the form $N(x, \emptyset, B)$ is a simplicial complex of cones $N(y, \{ x \}, B\backslash \{y\})$ with non-intersecting interiors.  
\end{remark}
\begin{example}\label{ex-pri-mesc}
	Let $\states = \{ x_1, x_2, x_3, x_4\}$ and let $(l, u)$ be a PRI model given by the vector of lower bounds $l = (\frac{1}{5}, \frac{1}{5}, \frac{1}{5}, \frac{1}{5})$ and upper bounds $u = (\frac{1}{3}, \frac{1}{3}, \frac{1}{3}, \frac{1}{3})$, and denote its natural extension by $\low E$. Let $h = (4, 1, 3, 2)$. By the above construction, it is easy to see that $\low E(h) = P(h)$, where $P$ is the linear prevision corresponding to probability mass function $p = (\frac{1}{5}, \frac{1}{3}, \frac{1}{5}, \frac{4}{15})$, resulting in $\low E(h) = \frac{34}{15}$. The corresponding MESC is $N(x_4, \{ x_2\}, \{ x_1, x_3\})$ containing all vectors $h'$ with $h'(x_2) \le h'(x_4), h'(x_1)\ge h'(x_4)$ and $h'(x_3)\ge h'(x_4)$. Take, for instance, $h' = (1, -1, 2, 0)$. Additivity within normal cone, Proposition~\ref{prop-cone-additivity}, implies that $\low E(h+h') = \low E(h) + \low E(h') = \frac{34}{15}+\frac{4}{15} = \frac{38}{15}$. Notice that $h$ and $h'$ are not comonotone, hence, additivity with respect to the normal cone applies to a larger class of vectors than comonotone additivity. 
\end{example}

\subsection{Graph structure of the normal cones corresponding to coherent PRI models}
We now analyze the adjacency relations for the family of cones of the form $N(x, A, B)$. The cone of this form is generated by the indicator functions of the family of sets 
\begin{equation}
	\mathcal A = \{ \{z \} \colon z\in B \} \cup \{ \{ v\}^c \colon v\in A \} \cup \{ \states \}.
\end{equation}
The adjacent cones are formed by selecting an element of $\mathcal A$ and replacing it by a suitable set to form a new cone. Not all candidates produce adjacent cones, though. To select those that do, we will make use of Lemma~\ref{lem-adjacent-cone}. Assume for the moment that $|A|, |B|>1$. We will return to the borderline cases later. Take some $y\in A$ and consider possible candidates for the replacement of $\{ y\}^c\in \mathcal A$. These are $\{ y\}, \{ x\}$ and $\{ x\}^c$. To see which induce adjacent cones, calculate the normal vector $t$ to the hyperplane $\mathrm{lin}(\mathcal A\backslash \{ \{y\}^c\})$ and denote its elements by $t(x)$ for $x\in\states$. For every $\{ x\}$, $\charf{\{x \}}\cdot t = 0$ implies that $t(x) = 0$. Similarly, $t\cdot \charf{\{ x\}^c} = t\cdot (\charf{\states}-\charf{\{x \}}) = -t\cdot \charf{\{x \}}$, because of $\states \in \mathcal A\backslash \{ \{y\}^c\}$, implies $t(x) = 0$ as well. Thus, because of $t\cdot \charf{\states} =0$, we have that $t(y) + t(x) = 0$. Take the solution where $t(x) = 1$ and $t(y) = -1$.  Since $\charf{\{ y\}^c}\cdot t = 1$, the scalar product of the new vector with $t$ must be negative. The products of the candidates identified above are the following: $\charf{\{y\}}\cdot t = -1, \charf{\{x \}}\cdot t = 1, \charf{\{ x\}^c}\cdot t = -1$. Thus the candidates that induce adjacent cones are $\{ y \}$ and $\{ x\}^c$, which gives us the following adjacent cones 
\begin{enumerate}[({A}1)]
	\item $N(x, A\backslash \{ y\}, B\cup \{ y\})$,
	\item $N(y, (A\backslash \{ y\})\cup \{ x\}, B)$.
\end{enumerate}
Let us now consider the case of $z\in B$, where $\{ z\}\in \mathcal A$. The candidates to replace $\{ z\}$ are again $\{ z\}^c, \{x\}$ and $\{ x\}^c$. The same analysis as above now gives us a normal vector $t$, such that $t(x) = 1$ and $t(z) = -1$. Now we have that $t\cdot \charf{\{z\}} = -1$, whence the candidates with positive scalar product are those inducing adjacent cones. We have $\charf{\{ z\}^c}\cdot t = 1, \charf{\{x \}}\cdot t = 1, \charf{\{ x\}^c}\cdot t = -1$. Now $\{ z\}^c$ and $\{ x\}$ fit, inducing the cones:
\begin{enumerate}[({B}1)]
	\item $N(x, A\cup \{ z\}, B\backslash \{ z\})$,
	\item $N(z, A, (B\backslash \{ z\})\cup \{ x\})$.
\end{enumerate}
Now, in the case where $|A| = 1$, only (A2) is possible in the first case, while (B2) is the only possible neighbour if $|B|=1$. 

In particular case of a coherent PRI model $(l, u)$, only one of adjacent cones (A1) or (A2) and (B1) or (B2) respectively corresponds to an extreme point. Let us again first consider the case of $y\in A$. Whether the adjacent cone is (A1) or (A2) depends on which $x$ or $y$ satisfies condition \eqref{eq-pri-index-k} in place of $x_k$. The fact that $N(x, A, B)$ corresponds to an extreme point, implies that 
\begin{equation}\label{eq-cone-starting}
	l(x) \le 1-\sum_{v\in A} l(v)-\sum_{z\in B} u(z) \le u(x).
\end{equation}
Because of $y\in A$, it easily follows that 
\begin{equation}
	l(y) \le 1-\sum_{v\in (A\backslash\{y\})\cup \{ x\}} l(v)-\sum_{z\in B} u(z), 
\end{equation}
which corresponds to replacing $y$ with $x$ in $A$, as in the case of (A2). On the other hand, we can either have 
\begin{equation}\label{eq-cone-A1}
	l(x) \le 1-\sum_{v\in A\backslash\{y\}} l(v)-\sum_{z\in B\cup\{y\}} u(z),
\end{equation}
which is equivalent to 
\begin{equation}
	1-\sum_{v\in (A\backslash\{y\})\cup \{ x\}} l(v)-\sum_{z\in B} u(z) \ge u(y). 
\end{equation}
or the opposite inequalities in both equations. If the first inequality holds, then (A1) is the cone corresponding to an extreme point, because $x$ is the element satisfying  \eqref{eq-pri-index-k}. In the opposite case, where 
\begin{equation}\label{eq-cone-A2}
	1-\sum_{v\in (A\backslash\{y\})\cup \{ x\}} l(v)-\sum_{z\in B} u(z) \le u(y),
\end{equation}
condition \eqref{eq-pri-index-k} is satisfied by $y$ and therefore (A2) corresponds to an extreme point. 

Now take some $z\in B$. Again, equation \eqref{eq-cone-starting} implies 
\begin{equation}
	1-\sum_{y\in A} l(y)-\sum_{v\in (B\backslash\{z\})\cup \{ x\}} u(z) \le u(z),  
\end{equation}
corresponding to replacing $z$ with $x$ in $B$. Furthermore, we have the following pair of equivalent equations
\begin{equation}\label{eq-cone-B1}
	1-\sum_{y\in A\cup\{z\}} l(y)-\sum_{v\in (B\backslash\{z\})} u(z) \le u(x)  
\end{equation}
and 
\begin{equation}
	l(z) \ge 1-\sum_{y\in A} l(y)-\sum_{v\in (B\backslash\{z\})\cup \{ x\}} u(z) .  
\end{equation}
If \eqref{eq-cone-B1} holds, then (B1) corresponds to an extreme point and in the case of the opposite inequality 
\begin{equation}\label{eq-cone-B2}
	l(z) \le 1-\sum_{y\in A} l(y)-\sum_{v\in (B\backslash\{z\})\cup \{ x\}} u(z) , 
\end{equation}
(B2) corresponds to an extreme point. 

Let us now summarize. 
\begin{theorem}\label{thm-pri-adjacency}
	Let a cone of the form $N(x, A, B)$ correspond to an extreme point of a coherent PRI model $(l, u)$. 
	\begin{enumerate}[(i)]
		\item The adjacent cones are then exactly the cones of the form 
		\begin{itemize}
			\item (A2) for every $y\in A$; 
			\item if $|A|>1$, (A1) for every $y\in A$;
			\item (B2) for every $z\in B$;
			\item if $|B|>1$,  (B1) for every $z\in B$.
		\end{itemize} 
		\item For a given $y\in A$, (A1) corresponds to an extreme point if \eqref{eq-cone-A1} is satisfied; and (A2) corresponds to an extreme point if \eqref{eq-cone-A2} is satisfied. 
		\item For a given $z\in B$, (B1) corresponds to an extreme point if \eqref{eq-cone-B1} is satisfied; and (B2) corresponds to an extreme point if \eqref{eq-cone-B2} is satisfied. 
	\end{enumerate}	
\end{theorem}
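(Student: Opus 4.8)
The plan is to present the theorem as the organised conclusion of the normal-vector computations carried out immediately before its statement. The hypothesis forces $N(x,A,B)$ to be the actual normal cone $\ncone{(l,u)}{P}$ at an extreme point, hence a MESC; by Proposition~\ref{prop-PRI-mesc} this means $A\cup B=\{x\}^c$ with $A,B\neq\emptyset$ (the cases where one side would be empty are not MESCs, and the remark after Theorem~\ref{thm-PRI-cones} explains how they decompose into cones of the present form). The three parts then need: the adjacency criterion of Lemma~\ref{lem-adjacent-cone} for (i), and the extreme-point index characterisation \eqref{eq-pri-index-k}, rewritten for $N(x,A,B)$ as \eqref{eq-cone-starting}, for (ii) and (iii).

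For (i), I would list the generators of $N(x,A,B)$: the vectors $\charf{\{y\}^c}$ for $y\in A$, $\charf{\{z\}}$ for $z\in B$, and $\charf{\states}$. The last is present in every credal-set normal cone by \eqref{eq-nc-form} and is not exchanged, so by Corollary~\ref{cor-adjacent-mesc} every adjacent MESC is obtained by exchanging one $\charf{\{y\}^c}$ or one $\charf{\{z\}}$ for another support vector, subject to the result again being of the Proposition~\ref{prop-PRI-mesc} form. Fixing $y\in A$: since $A\setminus\{y\}$ together with $B$ covers exactly $\{x,y\}^c$, the only admissible replacements are $\charf{\{y\}}$, $\charf{\{x\}}$, $\charf{\{x\}^c}$; the normal vector $t$ to $\mathrm{lin}(\mathcal{A}\setminus\{\charf{\{y\}^c}\})$ is, up to scaling, the one with $t(x)=1$, $t(y)=-1$ and all other coordinates $0$, so Lemma~\ref{lem-adjacent-cone} (the sign test, which gives an iff) discards $\charf{\{x\}}$ and leaves exactly the cones (A1) and (A2). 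The symmetric computation with $t(x)=1$, $t(z)=-1$ handles $z\in B$ and leaves exactly (B1), (B2). Finally, if $|A|=1$ then (A1)$=N(x,\emptyset,B\cup\{y\})$ violates $k\ge 1$ in Proposition~\ref{prop-PRI-mesc}, hence is not a MESC and not an adjacent cone; symmetrically (B1) drops out when $|B|=1$. This yields the four cases in (i).

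For (ii) and (iii), each candidate cone is again of the form $N(x',A',B')$ with $x'\in\{x,y\}$ (resp. $\{x,z\}$) and $A',B'$ the rearranged sides, so by \eqref{eq-cone-starting} it corresponds to an extreme point iff both the lower bound $l(x')\le 1-\sum_{A'}l-\sum_{B'}u$ and the upper bound $1-\sum_{A'}l-\sum_{B'}u\le u(x')$ hold. Writing $C=1-\sum_A l-\sum_B u$, the computations are: for (A1)$=N(x,A\setminus\{y\},B\cup\{y\})$ the relevant value is $C-(u(y)-l(y))$, so its upper bound is automatic from $C\le u(x)$ and $l(y)\le u(y)$, leaving the lower bound, which is exactly \eqref{eq-cone-A1}; for (A2)$=N(y,(A\setminus\{y\})\cup\{x\},B)$ the relevant value is $C+l(y)-l(x)$, so its lower bound is automatic from $l(x)\le C$, leaving the upper bound, which is exactly \eqref{eq-cone-A2}. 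Since \eqref{eq-cone-A1} and \eqref{eq-cone-A2} are the two complementary inequalities comparing $C$ with $l(x)-l(y)+u(y)$, for each $y\in A$ exactly one of (A1), (A2) corresponds to an extreme point (both in the boundary case of equality). Interchanging the roles of singletons and complements gives the analogous statement for $z\in B$, the cones (B1), (B2), and \eqref{eq-cone-B1}, \eqref{eq-cone-B2}. In the borderline case $|A|=1$, where $B=\{x,y\}^c$ and only (A2) survives, I would note that \eqref{eq-cone-A2} is then automatic: it reduces to $u(y)\ge 1-l(x)-\sum_B u$, which follows from the coherence inequality $u(y)\ge 1-\sum_{w\neq y}l(w)$ together with $\sum_B u\ge\sum_B l$; symmetrically for $|B|=1$.

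The step I expect to be the main obstacle is making the case analysis in (i) genuinely exhaustive, rather than any individual inequality: one must be certain that $\charf{\{y\}}$, $\charf{\{x\}}$, $\charf{\{x\}^c}$ really are the only candidates producing an adjacent MESC, which leans on Proposition~\ref{prop-PRI-mesc} fully pinning down the form of every MESC of a PRI credal set, and one must keep careful track of which of the two one-sided bounds in \eqref{eq-pri-index-k} is the active constraint for each of (A1)--(B2). The borderline size cases also need a small extra ingredient --- coherence of $(l,u)$ --- that the generic case does not, and it is easy to overlook that there (A2) and (B2) genuinely correspond to extreme points and are not merely adjacent MESCs.
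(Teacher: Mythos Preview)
Your proposal is correct and follows essentially the same route as the paper: the normal-vector computations and the sign test from Lemma~\ref{lem-adjacent-cone} are carried out in the paragraphs immediately preceding the theorem, and the theorem is then stated explicitly as a summary (``Let us now summarize''). Your closing remark that (A2) automatically corresponds to an extreme point when $|A|=1$ goes slightly beyond what the theorem asserts; the conclusion is right, but the coherence inequality you need is the reachability of $l(x)$, namely $l(x)+\sum_{w\neq x}u(w)\ge 1$, which directly gives $u(y)\ge 1-l(x)-\sum_{B}u$, rather than the inequality you cited.
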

\begin{remark}
	The borderline case in the above theorem is if both \eqref{eq-cone-A1} and \eqref{eq-cone-A2} or \eqref{eq-cone-B1} and \eqref{eq-cone-B2} are satisfied. This case corresponds to the situation where the two cones correspond to different possible triangulations of the same normal cone. 
\end{remark}
We illustrate the above theorem with the following example. 
\begin{example}\label{ex-pri-adjacency}
	Consider again the PRI model from Example~\ref{ex-pri-mesc}. It was identified that $N(x_4, \{ x_2\}, \{ x_1, x_3\})$ is a MESC in the normal simplicial fan of the model. Let us now consider all adjacent MESCs. The credal set $\cset(l, u)$ is three dimensional, whence every MESC is adjacent to three other MESCs. 	According to notation in Theorem~\ref{thm-pri-adjacency}, we have $x = x_4, A = \{ x_2\}$ and $B = \{ x_1, x_3\}$. The adjacent cones are obtained by modifying sets $A$ and $B$ as described in the theorem. Let us first consider the only element $y=x_2$ of $A$. By (i) of the theorem, the only possible adjacent cone is obtained using (A2) (because of $|A|=1$), which gives $N(x_2, \{ x_4\}, \{ x_1, x_3\})$. The other two adjacent MESCs are obtained by considering elements $z$ of $B$. Take first $z = x_1$, which gives us two possible adjacent cones, using (B2), would result in $N(x_1, \{x_2\}, \{x_3, x_4\})$, and using (B1), in $N(x_4, \{ x_1, x_2\}, \{ x_3\})$. In our particular case, only one of the two cones actually corresponds to an extreme point, and to select the right one we use the criterion (iii) from the theorem. An easy calculation shows that \eqref{eq-cone-B2} is satisfied, while \eqref{eq-cone-B1} is not, whence we conclude that the adjacent cone is  $N(x_1, \{x_2\}, \{x_3, x_4\})$. In a similar way we find the third adjacent cone to be $N(x_3, \{x_2\}, \{x_1, x_4\})$. 
	
	By symmetry, we can easily describe the complete structure of the normal simplicial fan. That is, it contains all 12 possible MESCs of the form $N(x, A, B)$, where $|A|=1$ and $|B|=2$. Moreover, every MESC of the form $N(x, \{y\}, \{z, t\})$ is adjacent to $N(y, \{x\}, \{z, t\}), N(z, \{y\}, \{x, t\})$ and $N(t, \{y\}, \{z, x\})$. 
\end{example}

\subsection{Maximal number of extreme points}
In this section we estimate the possible maximal number of extreme points of credal sets of coherent PRI models. By Proposition~\ref{prop-full-triangulation} (iv), every MESC is a normal cone of some convex set corresponding to an extreme point. In general however, a normal cone in an extreme point can be triangulated as a union of MESCs. Thus, the maximal number of extreme points of a credal set is bounded by the number of MESCs corresponding to a complete simplicial fan obtained as a triangulation of the normal fan of the credal set. Moreover, by Theorem~\ref{thm-PRI-cones}(viii), every cone generated by a maximal chain is contained in a single MESC corresponding to credal set of a coherent PRI model. As the number of maximal chains is known to be equal to $n!$, the number of MESCs and therefore the maximal number of extreme points can be estimated from the number of the chain generated cones contained in the MESCs. We start with the following simple result. 
\begin{proposition}\label{prop-number-cones-PRI}
	A cone of the form $N(x, A, B)$ contains exactly $|A|!\cdot |B|!$ cones of the form $\cone{\mathcal A}$, where $\mathcal A$ is a maximal chain. 
\end{proposition}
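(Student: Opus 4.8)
The plan is to set up an explicit bijection between the maximal chains $\mathcal A$ whose cone $\cone{\mathcal A}$ is contained in $N(x,A,B)$ and the pairs consisting of a linear order on $A$ and a linear order on $B$; since there are obviously $|A|!\cdot|B|!$ such pairs, this gives the claimed count. Throughout I treat the case where $N(x,A,B)$ is a maximal elementary simplicial cone, so that $A\cup B=\{x\}^c$, which is the setting relevant to the ensuing bound on the number of extreme points (when $A\cup B\subsetneq\{x\}^c$ the cone $N(x,A,B)$ is lower-dimensional and, $\cone{\mathcal A}$ being full-dimensional, contains no chain cone at all).

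The first step is to reduce containment of the whole cone to a meeting condition on relative interiors: $\cone{\mathcal A}\subseteq N(x,A,B)$ if and only if $\ri{\cone{\mathcal A}}\cap N(x,A,B)\neq\emptyset$. Indeed, by Theorem~\ref{thm-PRI-cones}(viii), exactly one of $\ri{\cone{\mathcal A}}\subseteq N(x,A,B)$ and $\ri{\cone{\mathcal A}}\cap N(x,A,B)=\emptyset$ occurs; since a convex set is the closure of its relative interior and $N(x,A,B)$ is closed, the first alternative is equivalent to $\cone{\mathcal A}\subseteq N(x,A,B)$, while the second rules it out.

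The second step is to identify, among all maximal chains, those satisfying this condition. Writing a maximal chain as $\emptyset=A_0\subsetneq A_1\subsetneq\dots\subsetneq A_n=\states$ with $\{y_i\}=A_i\setminus A_{i-1}$, the form \eqref{eq-nc-form} of $\cone{\mathcal A}$ gives that $\ri{\cone{\mathcal A}}$ consists exactly of the vectors $f$ whose coordinates are strictly monotone along the enumeration $y_1,\dots,y_n$. Any such $f$ has pairwise distinct coordinates, so, using $A\cup B=\{x\}^c$, the membership $f\in N(x,A,B)$ — that is, $f|_A\le f(x)\le f|_B$ — holds precisely when the $|A|$ elements of $A$ occupy the positions of $y_1,\dots,y_n$ on one distinguished side of $x$ and the $|B|$ elements of $B$ occupy those on the other side. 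Conversely, for any enumeration of this shape a generic strictly monotone $f$ satisfies the corresponding strict inequalities, hence lies in $\ri{\cone{\mathcal A}}\cap N(x,A,B)$. Combining with the first step, $\cone{\mathcal A}\subseteq N(x,A,B)$ holds exactly for the maximal chains whose enumeration lists the elements of $A$ contiguously, then $x$, then the elements of $B$ contiguously.

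Sending each such chain to the pair of orders it induces on $A$ and on $B$ is then a bijection onto all such pairs, giving exactly $|A|!\cdot|B|!$ cones. I expect the first step to be the only genuinely substantive point, where Theorem~\ref{thm-PRI-cones}(viii) is exactly what promotes ``some relative-interior point of $\cone{\mathcal A}$ lies in $N(x,A,B)$'' to ``$\cone{\mathcal A}$ lies in $N(x,A,B)$''; the explicit description of $\ri{\cone{\mathcal A}}$ and the counting are routine.
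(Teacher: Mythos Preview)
Your argument is correct and follows essentially the same route as the paper: both set up a bijection between the maximal chains $\mathcal A$ with $\cone{\mathcal A}\subseteq N(x,A,B)$ and the pairs of linear orders on $A$ and on $B$. The paper phrases this via the correspondence between maximal comonotone classes and maximal chains (Corollary~\ref{cor-comonotone-chain}) and the partial order $A\preceq x\preceq B$, whereas you make the reduction step explicit through Theorem~\ref{thm-PRI-cones}(viii) and the description of $\ri{\cone{\mathcal A}}$ as strictly monotone vectors; these are two ways of saying the same thing. Your explicit remark that the statement only makes sense in the MESC case $A\cup B=\{x\}^c$ is a useful clarification the paper leaves implicit.
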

\begin{proof}
	By Corollary~\ref{cor-comonotone-chain}, there is a one-to-one correspondence between comonotone classes and chains, and thus also between maximal comonotone classes and maximal chains. Further, a maximal comonotone classes correspond to strict linear orderings in $\states$. A cone of the form $N(x, A, B)$ contains all functions $f$ satisfying $f|_A \le f(x) \le f|_B$. This induces a partial ordering $A\preceq x \preceq B$, which is compatible with exactly $|A|!$ complete orderings of $A$ and $|B|$ complete orderings in $B$, which gives exactly  $|A|!\cdot |B|!$ distinct complete orderings. 
\end{proof}
\begin{corollary}\label{cor-number-cones-bounds}
	The number $m$ of distinct MESCs in the triangulation of a normal cone of a credal set corresponding to coherent PRI models on a set $\states$ with $n$ elements satisfies the following inequality 
	\begin{equation}\label{eq-number-cones-bounds}
		\frac{n!}{(n-2)!} = n(n-1) \le m \le  \frac{n!}{\left \lfloor\frac{n-1}{2} \right \rfloor !\cdot \left \lceil\frac{n-1}{2} \right \rceil !}. 
	\end{equation}
\end{corollary}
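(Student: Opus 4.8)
The plan is to estimate $m$ by double-counting the $n!$ full-dimensional cones $\cone{\mathcal A}$ generated by the maximal chains $\mathcal A$ in $2^\states$. Fix a triangulation of the normal fan of the PRI credal set into MESCs. By Proposition~\ref{prop-full-triangulation} together with Proposition~\ref{prop-PRI-mesc}, its $m$ maximal cells are precisely the cones $N(x,A,B)$ with $A\cap B=\emptyset$, $A\cup B=\{x\}^c$ and $|A|,|B|\ge 1$. On the other hand, by Theorem~\ref{thm-2-monotone-cones}(i) the cones $\cone{\mathcal A}$ over maximal chains are the $n$-dimensional cells of a complete fan, and there are $n!$ of them. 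Since the $n$-dimensional cells of each of the two fans cover $\allgambles$ with pairwise disjoint relative interiors, and Theorem~\ref{thm-PRI-cones}(viii) says each $\ri{\cone{\mathcal A}}$ is either contained in a given MESC or disjoint from it, every $\cone{\mathcal A}$ lies inside exactly one MESC of the triangulation. Hence the family of maximal-chain cones is partitioned among the $m$ MESCs.

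Now count the cells in each block: by Proposition~\ref{prop-number-cones-PRI} the MESC $N(x_i,A_i,B_i)$ contains exactly $|A_i|!\,|B_i|!$ maximal-chain cones, so summing over the $m$ cells of the triangulation gives
\[
	n! \;=\; \sum_{i=1}^{m} |A_i|!\,|B_i|!, \qquad |A_i|+|B_i|=n-1,\quad |A_i|,|B_i|\ge 1.
\]
It remains to bound the summands uniformly. Writing a generic split as $a+b=n-1$ with $a,b\ge 1$, the step ratio $\frac{(a+1)!\,(n-2-a)!}{a!\,(n-1-a)!}=\frac{a+1}{n-1-a}$ passes from below $1$ to above $1$ at the balanced split, so $a!\,b!$ first decreases and then increases as $a$ runs from $1$ to $n-2$. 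Hence $a!\,b!$ attains its maximum $(n-2)!$ at $\{1,n-2\}$ and its minimum $\lfloor\frac{n-1}{2}\rfloor!\cdot\lceil\frac{n-1}{2}\rceil!$ at the balanced split, the latter being at least $\lfloor\frac{n-1}{2}\rfloor\cdot\lceil\frac{n-1}{2}\rceil$ since $k!\ge k$. Substituting $|A_i|!\,|B_i|!\le (n-2)!$ into the displayed identity gives $n!\le m\,(n-2)!$, i.e.\ $m\ge n(n-1)$; substituting $|A_i|!\,|B_i|!\ge \lfloor\frac{n-1}{2}\rfloor\cdot\lceil\frac{n-1}{2}\rceil$ gives $n!\ge m\,\lfloor\frac{n-1}{2}\rfloor\cdot\lceil\frac{n-1}{2}\rceil$, which rearranges to the stated upper bound on $m$.

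The only delicate point is the partition claim in the first paragraph: one must be sure that the $n!$ maximal-chain cones distribute among the MESCs of a fixed triangulation with neither overlap nor omission, so that Proposition~\ref{prop-number-cones-PRI} may be summed to the clean identity $n!=\sum_i|A_i|!\,|B_i|!$. This rests squarely on the all-or-nothing containment of Theorem~\ref{thm-PRI-cones}(viii) together with the completeness of both fans; once the identity is in hand, both inequalities follow from the elementary extremal analysis of $a!\,b!$ above.
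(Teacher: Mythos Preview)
Your argument is correct and follows the same line as the paper's proof: both use Proposition~\ref{prop-number-cones-PRI} to bound the number of maximal-chain cones inside each MESC between $\lfloor\frac{n-1}{2}\rfloor\cdot\lceil\frac{n-1}{2}\rceil$ and $(n-2)!$, and then divide $n!$ by these extremes. You have simply made explicit what the paper leaves implicit, namely the partition identity $n!=\sum_i |A_i|!\,|B_i|!$ (justified via Theorem~\ref{thm-PRI-cones}(viii) and completeness of both fans) and the elementary unimodality of $a!\,(n-1-a)!$.
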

\begin{remark}
	The last inequality in \eqref{eq-number-cones-bounds} is known from the literature, and can be found in \cite{decampos:94}, where the estimate for the number of extreme points of coherent PRI on 10 points is reported as an example, and it coincides with the number obtained in our Example~\ref{ex-number-ep1}. 
\end{remark}
\begin{proof}
	By Proposition~\ref{prop-number-cones-PRI}, the number of maximal comonotone cones contained within a MESC is between $(n-2)!$ and $\left \lfloor\frac{n-1}{2} \right \rfloor ! \cdot \left \lceil \frac{n-1}{2} \right \rceil !$, which readily implies the proposed bounds. 
\end{proof}
The following examples demonstrate that both bounds are reachable. 
\begin{example}\label{ex-number-ep1}
	Let $n = |\states| = 10$ and set $l(x) = \frac{1}{11}$  and $u(x) = \frac{1}{9}$ for every $x\in \states$. Let $x_1, \ldots, x_{10}$ be an ordering of the elements. Taking $k=5$ we have that $1-\sum_{6}^{10}\frac 1{11} - \sum_{1}^{4}\frac 19 = 1-\frac{5}{11}-\frac{4}{9} = \frac{10}{99}\in \left[ \frac{1}{11}, \frac{1}{9}\right]$. Hence, $x_5$ satisfies \eqref{eq-pri-index-k}, and therefore $A=\{ x_6, \ldots, x_{10}\}$ and $B = \{ x_1, \ldots, x_4\}$, and every cone $N(x, A, B)$ then satisfies $|A| = 5$ and $|B|=4$. It then contains $5!\cdot 4! = 2880$ maximal comonotone cones, and therefore the number of all distinct MESCs is $\frac{10!}{5!\cdot 4!} = 1260$, which is then equal to the number of extreme points. Moreover, this is the maximal number of extreme points for a credal set corresponding to a coherent PRI model on 10 elements. 
\end{example}
\begin{example}
	Let this time $n = |\states| = 10$ and set $l(x) = \frac{1}{20}$  and $u(x) = \frac{1}{9}$ for every $x\in \states$. Given an ordering $x_1, \ldots, x_{10}$ of elements of $\states$, it turns out that $x_9$ is exactly the element satisfying \eqref{eq-pri-index-k}. Due to symmetry, we can conclude that every MESC in this case is of the form $N(x, A, B)$ where $|A|=1$ and $|B|=8$. By Proposition~\ref{prop-number-cones-PRI}, all of them contain exactly $8!=40320$ maximal comonotone cones. The number of cones must therefore be exactly $\frac{10!}{8!} = 90$, which coincides with the lower bound in Corollary~\ref{cor-number-cones-bounds}. 
\end{example}

\section{Conclusions}
Normal cones prove to be a useful tool for a better understanding of credal sets and numerical procedures related to them. The aim of this paper is to provide a comprehensive description of the structure of normal cones corresponding to credal sets of coherent lower probabilities. General properties introduced in the first part were then used to give a detailed description of the normal cone structure for two important families of imprecise probabilities, 2-monotone lower probabilities and probability intervals. 

The methods proposed in this paper will serve to complement and improve upon existing results using normal cone based methods. Models whose analysis has been shown to benefit from such an approach are computations related to imprecise stochastic processes, particularly those in continuous time. Another area within the theory of imprecise probabilities that remains largely unexplored is the analysis of the sensitivity of coherent lower probabilities to perturbations. Better understanding of the structure of the corresponding credal sets based on the approaches presented in this paper could help in such an analysis as part of our future research. Research on other important classes of lower probabilities, such as $p$-boxes and their multivariate generalizations, could also benefit from the approach proposed here.

	\section*{Acknowledgements}
	\begin{enumerate}
		\item The author acknowledges the financial support from the Slovenian Research Agency (research core funding No. P5-0168).
		\item The author is grateful to the two anonymous referees for careful readings of previous version of this paper and for many valuable suggestions.
	\end{enumerate}

\bibliography{references_all}
\end{document}